 \newcommand{\linkdest}[1]{\Hy@raisedlink{\hypertarget{#1}{}}}
\g@addto@macro\normalsize{%
  \setlength\abovedisplayskip{6.6pt}
  \setlength\belowdisplayskip{6.6pt}
  \setlength\abovedisplayshortskip{6.6pt}
  \setlength\belowdisplayshortskip{6.6pt}
}
\newcommand\thisenumsymbol{}
\theoremstyle{plain}
\newtheorem{thm}{Theorem}
\newtheorem{lemma}[thm]{Lemma}
\newtheorem{cor}[thm]{Corollary}
\newtheorem{claim}[thm]{Claim}
\theoremstyle{definition}
\newcommand{\p}[3]{\ifthenelse{\equal{#3}{}}{p_{[#1|#2]}}{p_{[#1|#2|#3]}}}
\newcommand{\lin}{\ell^{\mathrm{in}}}
\newcommand{\lout}{\ell^{\mathrm{out}}}
\newcommand{\uin}{u^{\mathrm{in}}}
\newcommand{\uout}{u^{\mathrm{out}}}
\newcommand{\MyComment}[1]{\begin{flushright} \ttfamily\textcolor{black!50}{//~#1} \end{flushright} \vspace{-8pt}}
\DeclareMathOperator{\ei}{\ell_{\infty}}
\DeclareMathOperator{\eo}{\ell_1}
\DeclareMathOperator*{\spa}{span}
\let\oldnl\nl
\newcommand{\nonl}{\renewcommand{\nl}{\let\nl\oldnl}}
\NewDocumentCommand{\makemathbox}{O{\width} m}{%
  \def\makemathbox@##1##2{\makebox[#1]{$##1##2$}}%
  \mathpalette\makemathbox@{#2}%
}
\newcommand{\cF}{\mathcal{F}}
\newcommand{\cI}{\mathcal{I}}
\newcommand{\cO}{\mathcal{O}}
\def\final{0}  
\def\iflong{\iffalse}
\newcommand{\krnote}[1]{{\color{red}[{\tiny \textbf{Krist{\'o}f:} \bf #1}]\marginpar{\color{red}*}}}
\newcommand{\kinote}[1]{{\color{teal}[{\tiny \textbf{Kitti:} \bf #1}]\marginpar{\color{teal}*}}}
\newcommand{\lnote}[1]{{\color{purple}[{\tiny \textbf{Lydia:} \bf #1}]\marginpar{\color{purple}*}}}
\newcommand{\krnote}[1]{}
\newcommand{\kinote}[1]{}
\newcommand{\lnote}[1]{}
\title{Newton-type algorithms for inverse optimization II:\\
weighted span objective}
\date{}
\author{Krist{\'o}f B{\'e}rczi} 
\author{Lydia Mirabel Mendoza-Cadena}
\author{Kitti Varga}
\affil{{\footnotesize MTA-ELTE Momentum Matroid Optimization Research Group and ELKH-ELTE Egerv\'ary Research Group, Department of Operations Research, E\"otv\"os Lor\'and University, Budapest, Hungary. Email: \texttt{kristof.berczi@ttk.elte.hu, lmmendoza@proton.me, vkitti@math.bme.hu}.}}
\begin{document}
\maketitle

\begin{abstract}
In inverse optimization problems, the goal is to modify the costs in an underlying optimization problem in such a way that a given solution becomes optimal, while the difference between the new and the original cost functions, called the deviation vector, is minimized with respect to some objective function. The $\eo$- and $\ei$-norms are standard objectives used to measure the size of the deviation. Minimizing the $\eo$-norm is a natural way of keeping the total change of the cost function low, while the $\ei$-norm achieves the same goal coordinate-wise. Nevertheless, none of these objectives is suitable to provide a \emph{balanced} or \emph{fair} change of the costs.

In this paper, we initiate the study of a new objective that measures the difference between the largest and the smallest weighted coordinates of the deviation vector, called the \emph{weighted span}. We give a min-max characterization for the minimum weighted span of a feasible deviation vector, and provide a Newton-type algorithm for finding one that runs in strongly polynomial time in the case of unit weights. 

\medskip

\noindent \textbf{Keywords:} Algorithm, Bound-constraints, Inverse optimization, Min-max theorem, Span
    
\end{abstract}

\section{Introduction}
\label{sec:intro}

Informally, in an inverse optimization problem, we are given a feasible solution to an underlying optimization problem together with a linear objective function, and the goal is to modify the objective so that the input solution becomes optimal. Such problems were first considered by Burton and Toit~\cite{burton1992instance} in the context of inverse shortest paths, and found countless applications in various areas ever since. We refer the interested reader to \cite{richter2016inverse} for the basics of inverse optimization, to \cite{heuberger2004inverse,demange2014introduction} for surveys, and to \cite{berczi2023infty,ahmadian2018algorithms} for quick introductions.

There are several ways of measuring the difference between the original and the modified cost functions, the $\eo$- and $\ei$-norms being probably the most standard ones. Minimizing the $\eo$-norm of the deviation vector means that the overall change in the costs is small, while minimizing the $\ei$-norm results in a new cost function that is close to the original one coordinate-wise. However, these objectives do not provide any information about the relative magnitude of the changes on the elements compared to each other. Indeed, a deviation vector of $\ei$-norm $\delta$ might increase the cost on an element by $\delta$ while decrease it on another by $\delta$, thus resulting in a large relative difference between the two. Such a solution may not be satisfactory in situations when the goal is to modify the costs in a fair manner, and the magnitude of the individual changes is not relevant. 

To overcome these difficulties, we consider a new type of objective function that measures the difference between the largest and the smallest weighted coordinates of the deviation vector, called the \emph{weighted span}\footnote{The notion of span appears under several names in various branches of mathematics, such as \emph{range} in statistics, \emph{amplitude} in calculus, or \emph{deviation} in engineering.}. Although being rather similar at first glance, the $\ell_\infty$-norm and the span behave quite differently as the infinite norm measures how far the coordinates of the deviation vector are from zero, while the span measures how far the coordinates of the deviation vector are from each other. In particular, it might happen that one has to change the cost on each element by the same large number, resulting in a deviation vector with large $\ei$-norm but with span equal to zero. To the best of our knowledge, this objective was not considered before in the context of inverse optimization. 

The present work is the second member of a series of papers that aims at providing simple combinatorial algorithms for inverse optimization problems under different objectives. In the first part~\cite{berczi2023infty} of the series, the authors considered the weighted bottleneck Hamming distance and weighted $\ei$-norm objectives, and proposed an algorithm that determines an optimal deviation vector efficiently. The algorithm is based on the following Newton-type scheme: in each iteration, we check if the input solution is optimal, and if it is not, then we ``eliminate'' the current optimal solution by balancing the cost difference between them. 

Here we work out the details of an analogous algorithm for the weighted span objective. As it turns out, finding an optimal deviation vector under this objective is significantly more challenging than it was for the weighted $\ei$-norm. Intuitively, the complexity of the problem is caused by the fact that the underlying optimization problem may have feasible solutions of different sizes, and one has to balance very carefully between increasing the costs on certain elements while decreasing on others to obtain a feasible deviation vector, especially when the coordinates of the deviation vector are ought to fall within given lower and upper bounds.

\paragraph*{Previous work.}

In \emph{balanced optimization problems}, the goal is to find a feasible solution such that the difference between the maximum and the minimum weighted variable defining the solution is minimized. Martello, Pulleyblank, Toth, and de Werra~\cite{martello1984balanced}, Camerini, Maffioli, Martello, and Toth~\cite{camerini1986most}, and Ficker, Spieksma and Woeginger~\cite{ficker2018robust}  considered the balanced assignment and the balanced spanning tree problems. Duin and Volgenant~\cite{duin1991minimum} introduced a general solution scheme for minimum deviation problems that is also suited for balanced optimization, and analyzed the approach for spanning trees, paths and Steiner trees in graphs. Ahuja~\cite{ahuja1997balanced} proposed a parametric simplex method for the general balanced linear programming problem as well as a specialized version for the balanced network flow problem. Scutell{\`a}~\cite{scutella1998strongly} studied the balanced network flow problem in the case of unit weights, and showed that it can be solved by using an extension of Radzik's~\cite{radzik1993parametric} analysis of Newton's method for linear fractional combinatorial optimization problems. 

An analogous approach was proposed in the context of $\ei$-norm objective by Zhang and Liu~\cite{zhang2002general}, who described a model that generalizes numerous inverse combinatorial optimization problems when no bounds are given on the changes. They exhibited a Newton-type algorithm that determines an optimal deviation vector when the inverse optimization problem can be reformulated as a certain maximization problem using dominant sets. 

\paragraph*{Problem formulation.}

We denote the sets of \emph{real} and \emph{positive real} numbers by $\mathbb{R}$ and $\mathbb{R}_+$, respectively. For a positive integer $k$, we use $[k]\coloneqq \{1,\dots,k\}$. Let $S$ be a ground set of size~$n$. Given subsets $X,Y\subseteq S$, the \emph{symmetric difference} of $X$ and $Y$ is denoted by $X\triangle Y\coloneqq (X\setminus Y)\cup(Y\setminus X)$. For a weight function $w\in\mathbb{R}_+^S$, the total sum of its values over $X$ is denoted by $w(X)\coloneqq \sum \{ w(s) \mid s\in X \}$, where the sum over the empty set is always considered to be $0$. Furthermore, we define $\frac{1}{w}(X)\coloneqq \sum \{ \frac{1}{w(s)} \bigm| s\in X \}$, and set $\|w\|_{\text{-}1}\coloneqq \frac{1}{w}(S)$. When the weights are rational numbers, then the values can be re-scaled as to satisfy $1/w(s)$ being an integer for each $s\in S$. Throughout the paper, we assume that $w$ is given in such a form  without explicitly mentioning it, implying that $\frac{1}{w}(X)$ is a non-negative integer for every $X\subseteq S$. 

Let $S$ be a finite ground set, $\cF \subseteq 2^S$ be a collection of \emph{feasible solutions} for an underlying optimization problem, $F^* \in \cF$ be an \emph{input solution}, $c \in \mathbb{R}^S$ be a \emph{cost function}, $w\in \mathbb{R}_+^S$ be a positive \emph{weight function}, and $\ell\colon S\to\mathbb{R}\cup\{-\infty\}$ and $u \colon S\to\mathbb{R}\cup\{+\infty\}$ be lower and upper bounds, respectively, such that $\ell \leq u $. We assume that an oracle $\cO$ is also available that determines an optimal solution of the underlying optimization problem $(S, \cF, c')$ for any cost function $c'\in\mathbb{R}^S$.

In the \emph{constrained minimum-cost inverse optimization problem under the weighted span objective} $\big( S, \cF, F^*, c, \ell, u, \spa_w(\cdot) \big)$, we seek a \emph{deviation vector} $p \in \mathbb{R}^S$ such that 
\begin{enumerate}[label=(\alph*)]\itemsep0em
\item \label{it:a} $F^*$ is a minimum-cost member of $\cF$ with respect to $c-p$,
\item \label{it:b} $p$ is within the bounds $\ell \leq p\leq u$, and
\item \label{it:c} $ \spa_w(p)\coloneqq \max \left\{ w(s)\cdot p(s) \bigm| s \in S \right\} - \min \left\{ w(s)\cdot p(s) \bigm| s \in S \right\}$ is minimized.
\end{enumerate}
Due to the lower and upper bounds $\ell$ and $u$, it might happen that there exists no deviation vector $p$ satisfying the requirements. A deviation vector is called \emph{feasible} if it satisfies conditions~\ref{it:a} and~\ref{it:b}, and \emph{optimal} if, in addition, it attains the minimum in~\ref{it:c}. We denote the problem by $\big( S, \cF, F^*, c, -\infty, +\infty, \spa_w(\cdot) \big)$ when no bounds are given on the coordinates of $p$ at all, and call such a problem \emph{unconstrained}. 

As an extension, we also consider \emph{multiple underlying optimization} problems at the same time. In this setting, instead of a single cost function, we are given $k$ cost functions $c^1,\dots,c^k$ together with an input solution $F^*$, and our goal is to find a single vector $p$ with $\ell \leq p \leq u$ such that $F^*$ has minimum cost with respect to $c^j-p$ for all $j \in [k]$. In other words, condition~\ref{it:a} modifies to
\begin{enumerate}[label=(\alph*')]\itemsep0em
\item \label{it:a'} $F^*$ is a minimum-cost member of $\cF$ with respect to $c^j-p$ for $j\in[k]$.
\end{enumerate}
In case of multiple cost functions, we use $\{c^j\}_{j\in[k]}$ instead of $c$ when denoting the problem.

\paragraph*{Our results.}

In \cite{berczi2023infty}, the authors gave Newton-type algorithms that determine an optimal deviation vector for bound-constrained minimum-cost inverse optimization problems under the weighted bottleneck Hamming distance and weighted $\ei$-norm objectives. Here our main result is an algorithm for the weighted span objective that works along the same line. However, due to the different nature of the span, the algorithm and its analysis is significantly more complicated than the one for the $\ei$-norm. We provide a min-max characterization for the weighted span of an optimal deviation vector in the unconstrained setting, i.e.\ when $\ell\equiv-\infty$ and $u \equiv+\infty$. Then we give an algorithm for finding an optimal deviation vector that makes $O(n^2\cdot\|w\|_{\text{-}1}^6)$ calls to the oracle $\cO$. In particular, the algorithm runs in strongly polynomial time for unit weights if the oracle $\cO$ can be realized by a strongly polynomial algorithm. Finally, we briefly explain how to solve the problem when multiple cost functions are given instead of a single one.

\medskip

The rest of the paper is organized as follows. In Section~\ref{sec:specdev}, we show that it is enough to look for an optimal deviation vector having a special form. The algorithm for the case of a single cost function in the constrained setting is presented in Section~\ref{sec:algorithm}. Finally, we give a min-max characterization of the weighted span of an optimal deviation vector in the unconstrained setting as well as a sketch of the extension of the algorithm for multiple cost functions in Section~\ref{sec:minmax}.

\section{Optimal deviation vectors} 
\label{sec:specdev}

For the weighted $\ei$-norm objective, the authors~\cite{berczi2023infty} verified the existence of an optimal deviation vector that corresponds to decreasing the costs on the elements of $F^*$ and increasing the costs on the elements of $S\setminus F^*$ by the same value $\delta\geq 0$, scaled by the reciprocal of the weights and truncated according to the lower and upper bound constraints. 

We first show that an analogous statement holds for the weighted span as well, which serves as the basic idea of our algorithm. Assume for a moment that there are no bound constraints and that $w$ is the unit weight function. If the members of $\cF$ have the same size, then one can always decrease the costs on the elements of $F^*$ by some $\delta$ in such a way that $F^*$ becomes a minimum-cost member of $\cF$. As another extreme case, if $F^*$ is the unique minimum- or maximum-sized member of $\cF$, then one can always shift the costs by the same number $\Delta$ in such a way that $F^*$ becomes a minimum-cost member of $\cF$. The idea of our approach is to combine these two types of changes in the general case by tuning the parameters $\delta,\Delta$, while also taking the weights and the bound constraints into account.

Consider an instance $\big( S, \cF, F^*, c, \ell, u, \spa_w(\cdot) \big)$ of the constrained minimum-cost inverse optimization problem under the weighted span objective, where $w\in\mathbb{R}^S_+$ is a positive weight function. For any $\delta, \Delta \in \mathbb{R}$, let $\p{\delta, \Delta}{\ell, u}{w} \colon S\to\mathbb{R}$ be defined as 
\begin{equation*}
\p{\delta, \Delta}{\ell, u}{w}(s) \coloneqq 
\begin{cases}
(\delta + \Delta)/w(s) & \text{if $s \in F^*$ and $\ell(s) \le (\delta + \Delta)/w(s) \le u(s)$}, \\
\ell(s) & \text{if $s \in F^*$ and $(\delta + \Delta)/w(s) < \ell(s)$}, \\
u(s) & \text{if $s \in F^*$ and $u(s) < (\delta + \Delta)/w(s)$}, \\
\Delta/w(s) & \text{if $s \in S\setminus  F^*$ and $\ell(s) \le \Delta/w(s) \le u(s)$}, \\
\ell(s) & \text{if $s \in S\setminus  F^*$ and $\Delta/w(s) < \ell(s)$}, \\
u(s) & \text{if $s \in S\setminus  F^*$ and $u(s) < \Delta/w(s)$}.
\end{cases}
\end{equation*}
We simply write $\p{\delta, \Delta}{}{w}$ when $\ell\equiv-\infty$ and $u\equiv+\infty$. The following observation shows that there exists an optimal deviation vector of special form.

\begin{lemma} \label{lem:inv_min_cost_span_deltas}
Let $\big( S, \mathcal{F}, F^*, c, \ell, u, \spa_w (\cdot) \big)$ be a feasible minimum-cost inverse optimization problem and let $p$ be an optimal deviation vector. Then $\p{\delta, \Delta}{\ell, u}{w}$ is also an optimal deviation vector, where $\Delta \coloneqq \min \{ w(s) \cdot p(s) \mid s \in S \}$ and $\delta \coloneqq \max \{ w(s) \cdot p(s) \mid s \in S \} - \Delta$.
\end{lemma}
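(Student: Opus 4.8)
The plan is to show that $q \coloneqq \p{\delta,\Delta}{\ell,u}{w}$ satisfies conditions \ref{it:a}, \ref{it:b}, and \ref{it:c}, where the last one is established by proving $\spa_w(q)\le\spa_w(p)=\delta$. The core idea is that $q$ is obtained from $p$ by pushing every weighted coordinate as far down as the bounds allow: on $S\setminus F^*$ we try to set $w(s)q(s)=\Delta$ (the old minimum), and on $F^*$ we try to set $w(s)q(s)=\delta+\Delta$ (the old maximum), truncating to $[w(s)\ell(s),w(s)u(s)]$ in both cases. So morally we are lowering the costs on $F^*$ less aggressively than on $S\setminus F^*$, which can only help make $F^*$ cheaper relative to any competitor.

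First I would verify \ref{it:b}: this is immediate from the definition, since in every case $q(s)$ is either $\ell(s)$, $u(s)$, or a value strictly between them, and $\ell\le u$. Next, \ref{it:c}: I claim $\Delta\le w(s)q(s)\le \delta+\Delta$ for every $s\in S$. Indeed, for $s\in F^*$ the target value $\delta+\Delta$ lies in $[\Delta,\delta+\Delta]$ and truncation to $[w(s)\ell(s),w(s)u(s)]$ keeps it there provided $w(s)\ell(s)\le\delta+\Delta$ and $w(s)u(s)\ge\Delta$, which hold because $\ell(s)\le p(s)\le u(s)$ gives $w(s)\ell(s)\le w(s)p(s)\le\delta+\Delta$ and $w(s)u(s)\ge w(s)p(s)\ge\Delta$; a symmetric argument handles $s\in S\setminus F^*$ with target $\Delta$. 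Hence $\spa_w(q)\le(\delta+\Delta)-\Delta=\delta=\spa_w(p)$, and since $p$ is optimal, equality holds and $q$ is optimal too — once we know it is feasible.

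The main work is \ref{it:a}: that $F^*$ is a minimum-cost member of $\cF$ with respect to $c-q$. I would compare $q$ with $p$ coordinatewise. The key monotonicity facts are: for $s\in F^*$, $q(s)\le p(s)$ (we either set $w(s)q(s)=\delta+\Delta\ge w(s)p(s)$... wait, that is the wrong direction), so I should be careful — in fact for $s\in F^*$ we have $w(s)q(s)=\min\{u(s)w(s),\,\max\{\delta+\Delta,\,\ell(s)w(s)\}\}$ and since $w(s)p(s)\le\delta+\Delta$ and $w(s)p(s)\ge\Delta\ge\ell(s)w(s)$... hmm, that gives $q(s)\ge p(s)$ on $F^*$, i.e. we are *increasing* the deviation on $F^*$; and for $s\in S\setminus F^*$, since $w(s)p(s)\ge\Delta$, truncation gives $w(s)q(s)\le w(s)p(s)$, i.e. $q(s)\le p(s)$ on $S\setminus F^*$. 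So $q\ge p$ on $F^*$ and $q\le p$ on $S\setminus F^*$. Then for any $F\in\cF$,
\begin{align*}
(c-q)(F^*)-(c-q)(F) &= (c-p)(F^*)-(c-p)(F) + \big(p-q\big)(F^*) - \big(p-q\big)(F)\\
&\le (c-p)(F^*)-(c-p)(F) + \big(p-q\big)(F^*\setminus F) - \big(p-q\big)(F\setminus F^*),
\end{align*}
and on $F^*\setminus F\subseteq F^*$ we have $p-q\le 0$, while on $F\setminus F^*\subseteq S\setminus F^*$ we have $p-q\ge 0$, so both correction terms are $\le 0$; combined with $(c-p)(F^*)\le(c-p)(F)$ (optimality of $p$) we get $(c-q)(F^*)\le(c-q)(F)$, as desired.

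The subtle point — the main obstacle — is handling the truncation cleanly: the inequalities $w(s)\ell(s)\le\delta+\Delta$ and $\Delta\le w(s)u(s)$ used above rely on $\ell(s)\le p(s)\le u(s)$ together with the definitions of $\delta$ and $\Delta$, and one must check each of the six cases of the definition of $\p{\delta,\Delta}{\ell,u}{w}$ to confirm that the sign of $p(s)-q(s)$ is as claimed (non-positive on $F^*$, non-negative on $S\setminus F^*$). I would organize this as a short case analysis, or more slickly observe that on $F^*$, $q(s)=\mathrm{med}\{\ell(s),(\delta+\Delta)/w(s),u(s)\}$ while $p(s)\in[\ell(s),u(s)]$ with $w(s)p(s)\le\delta+\Delta$, so the median is at least $p(s)$; symmetrically on $S\setminus F^*$. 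Everything else is bookkeeping.
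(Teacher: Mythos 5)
Your proposal is correct and takes essentially the same approach as the paper: both hinge on the coordinatewise bounds $\p{\delta,\Delta}{\ell,u}{w}(s)\ge p(s)$ for $s\in F^*$ (equivalently $q(s)=\min\{(\delta+\Delta)/w(s),u(s)\}$ there) and $\p{\delta,\Delta}{\ell,u}{w}(s)\le p(s)$ for $s\in S\setminus F^*$, and then compare $(c-q)(F^*)-(c-q)(F)$ to $(c-p)(F^*)-(c-p)(F)$. One tiny slip: the step where you pass from $(p-q)(F^*)-(p-q)(F)$ to $(p-q)(F^*\setminus F)-(p-q)(F\setminus F^*)$ is an identity (the $F^*\cap F$ terms cancel), not a $\le$, though the argument goes through either way.
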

\begin{proof}
The lower and upper bounds $\ell \le \p{\delta, \Delta}{\ell, u}{w} \le u$ hold by definition, hence \ref{it:b} is satisfied.

Now we show that \ref{it:a} holds. The assumption $\ell\leq p\leq u$ and the definitions of $\Delta$ and $\delta$ imply that $\ell(s) \le p(s) \le (\delta + \Delta)/w(s)$ and $\Delta/w(s) \le p(s) \le u(s)$ hold for every $s\in S$. Then for an arbitrary solution $F\in\mathcal{F}$, we get
\begin{align*}
&(c - \p{\delta, \Delta}{\ell, u}{w})(F^*) - (c - \p{\delta, \Delta}{\ell, u}{w})(F) \\[2pt]
{}&{}~~= 
\left( c(F^*) - \sum_{s \in F^*} \p{\delta, \Delta}{\ell, u}{w}(s) \right) - \left( c(F) - \sum_{s \in F} \p{\delta, \Delta}{\ell, u}{w}(s) \right) \\
{}&{}~~= 
\left (c(F^*) - \sum_{\makemathbox[.5\width]{s \in F^* }} \min\left\{\tfrac{\delta + \Delta}{w(s)}, u(s)\right\}\right)- \left(c(F) - \sum_{\makemathbox[.5\width]{s \in F \cap F^*} } \min\left\{\tfrac{\delta + \Delta}{w(s)}, u(s)\right\} - \sum_{\makemathbox[.5\width]{s \in F\setminus F^*}}  \max\left\{\ell(s), \tfrac{\Delta}{w(s)}\right\}\right) \\[2pt]
&~~= 
c(F^*) - c(F) - \sum_{\makemathbox[.6\width]{s \in F^*\setminus F}} \min\left\{\tfrac{\delta + \Delta}{w(s)},\ u(s)\right\} + \sum_{\makemathbox[.6\width]{s \in F\setminus F^*}} \max\left\{\ell(s),\ \tfrac{\Delta}{w(s)}\right\} \\[2pt]
&~~\le 
c(F^*) - c(F) - \sum_{s \in F^*\setminus F} p(s) + \sum_{s \in F\setminus F^*} p(s) \\[2pt]
&~~= 
\big( c(F^*) - p(F^*) \big) - \big( c(F) - p(F) \big)\\[2pt]
&~~\le 
0,
\end{align*}
where the last inequality holds by the feasibility of $p$. 

Finally, to see that \ref{it:c} holds for $\p{\delta, \Delta}{\ell, u}{w}$, observe that $\spa_w(p) = {\max \{ w(s) \cdot p(s) \mid s \in S \}} \allowbreak - \min \{ w(s) \cdot p(s) \mid s \in S \} = \delta$ and $\spa_w (\p{\delta, \Delta}{\ell, u}{w}) \le (\delta + \Delta) - \Delta = \delta$. That is, $\p{\delta, \Delta}{\ell, u}{w}$ is also optimal, concluding the proof of the lemma.
\end{proof}

\begin{cor} \label{cor:inv_min_cost_span_opt_dev_vector}
Let $\big( S,\mathcal{F},F^*,c,\ell,u,\spa_w( \cdot ) \big)$ be a feasible minimum-cost inverse optimization problem. Then there exist $\delta, \Delta \in \mathbb{R}$ such that $\p{\delta, \Delta}{\ell, u}{w}$ is an optimal deviation vector with 
\begin{align*}
\min \left\{ w(s) \cdot \p{\delta, \Delta}{\ell, u}{w}(s) \bigm| s \in S \right\} &= \Delta,\ \text{and}\\
\max \left\{ w(s) \cdot \p{\delta, \Delta}{\ell, u}{w}(s) \bigm| s \in S \right\}& = \delta + \Delta.
\end{align*}
Moreover,
\begin{align*}
\Delta &\le \min \left\{ w(s) \cdot u(s) \bigm| s \in S \right\},\ \text{and}\\
\delta + \Delta &\ge \max \left\{ w(s) \cdot \ell(s) \bigm| s \in S \right\}.    
\end{align*}
\end{cor}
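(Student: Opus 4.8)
The plan is to read off the corollary directly from Lemma~\ref{lem:inv_min_cost_span_deltas}. Let $p$ be an optimal deviation vector, and put $\Delta \coloneqq \min\{w(s)p(s)\mid s\in S\}$ and $\delta \coloneqq \max\{w(s)p(s)\mid s\in S\}-\Delta = \spa_w(p)\ge 0$, exactly as in the lemma. Then $q\coloneqq\p{\delta,\Delta}{\ell,u}{w}$ is again an optimal deviation vector, and in particular $\spa_w(q)=\spa_w(p)=\delta$. The only facts I will use are that $p$ is feasible and that $\Delta,\delta$ were defined from $p$: multiplying $\ell(s)\le p(s)\le u(s)$ by the positive number $w(s)$ and combining with the definitions of $\Delta$ and $\delta$, one gets, for every $s\in S$,
\[
  w(s)\ell(s)\ \le\ w(s)p(s)\ \le\ \delta+\Delta \qquad\text{and}\qquad \Delta\ \le\ w(s)p(s)\ \le\ w(s)u(s).
\]

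The two ``moreover'' inequalities are immediate from this: taking the maximum over $s$ in the left chain gives $\max_{s\in S} w(s)\ell(s)\le\delta+\Delta$, and taking the minimum over $s$ in the right chain gives $\Delta\le\min_{s\in S} w(s)u(s)$. For the two equalities I would first check that $\Delta\le w(s)q(s)\le\delta+\Delta$ holds for every $s\in S$. By definition $w(s)q(s)$ is the value obtained by clamping $\delta+\Delta$ (when $s\in F^*$) or $\Delta$ (when $s\in S\setminus F^*$) into the interval $[w(s)\ell(s),w(s)u(s)]$. The displayed inequalities say $w(s)\ell(s)\le\delta+\Delta$ and $\Delta\le w(s)u(s)$, so in each of the two cases only one end of the clamp can ever be active, and a one-line case distinction then shows that the clamped value never leaves $[\Delta,\delta+\Delta]$ (using also $\Delta\le\delta+\Delta$, i.e.\ $\delta\ge 0$).

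Finally, combining $\Delta\le\min_{s\in S}w(s)q(s)\le\max_{s\in S}w(s)q(s)\le\delta+\Delta$ with $\spa_w(q)=\delta$ closes the argument: writing $m\coloneqq\min_{s\in S}w(s)q(s)$, we have $m\ge\Delta$ while $m+\delta=\max_{s\in S}w(s)q(s)\le\delta+\Delta$, forcing $m=\Delta$ and hence $\max_{s\in S}w(s)q(s)=\delta+\Delta$, which are exactly the claimed equalities. The whole derivation is routine once the lemma is in hand; the one spot that needs a little care — and the closest thing to an obstacle — is the clamping case check, where one must invoke the feasibility of the \emph{original} vector $p$ (not merely that of $q$) to rule out the ``wrong'' side of each truncation and thereby keep every coordinate $w(s)q(s)$ inside $[\Delta,\delta+\Delta]$.
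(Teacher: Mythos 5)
Your proof is correct and follows the same route the paper intends — everything is read off from Lemma~\ref{lem:inv_min_cost_span_deltas} — but you fill in details the paper compresses into ``straightforward from the lemma,'' and your ordering of the two halves is actually cleaner than the paper's. You first extract from the feasibility of the \emph{original} optimal vector $p$ the raw bounds $w(s)\ell(s)\le\delta+\Delta$ and $\Delta\le w(s)u(s)$ for all $s$, which immediately give the ``moreover'' inequalities; you then use those same bounds to verify the clamping estimate $\Delta\le w(s)\,\p{\delta,\Delta}{\ell,u}{w}(s)\le\delta+\Delta$; and finally you combine that with $\spa_w(\p{\delta,\Delta}{\ell,u}{w})=\delta$ (both vectors being optimal) to force the two endpoint equalities. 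The paper presents the equalities first and the inequalities second, which on a careless reading looks circular since the clamping estimate needed for the equalities secretly invokes the very inequalities being proved in the second half; your version avoids that trap. The only thing worth flagging is cosmetic: the one-line clamping check genuinely needs both $\delta\ge 0$ (so $\Delta\le\delta+\Delta$) and the two feasibility-derived bounds, so it is worth stating all three explicitly rather than calling it a ``one-line case distinction,'' but you do in fact identify all three ingredients, so the argument is sound.
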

\begin{proof}
The first half is straightforward from Lemma~\ref{lem:inv_min_cost_span_deltas}. Since $\ell(s) \le \p{\delta, \Delta}{\ell, u}{w}(s) \le u(s)$ and $\Delta \le w(s) \cdot \p{\delta, \Delta}{\ell, u}{w}(s) \le \delta + \Delta$ hold for any $s \in S$, the second statement follows.
\end{proof}

\section{Algorithm} 
\label{sec:algorithm}

Consider an instance $\big( S, \cF, F^*, c, \ell, u, \spa_w(\cdot) \big)$ of the constrained minimum-cost inverse optimization problem under the weighted span objective. The aim of this section is to give an algorithm that either finds a feasible deviation vector with minimum weighted span, or recognizes that the problem is infeasible. Let us highlight the main ideas towards achieving this.
\smallskip

\noindent \textbf{Step 1. Deviation vector.} If the problem is feasible, then there exists an optimal deviation vector of the form $\p{\delta, \Delta}{\ell, u}{w}$ for some choice of $\delta,\Delta\in\mathbb{R}$ by Corollary~\ref{cor:inv_min_cost_span_opt_dev_vector}. Hence the problem reduces to identifying the values of $\Delta$ and $\delta+\Delta$.
\medskip

\noindent \textbf{Step 2. Guessing $\boldsymbol{\Delta}$ and $\boldsymbol{\delta+\Delta}$.} With the help of Corollary~\ref{cor:inv_min_cost_span_opt_dev_vector}, we identify two sets of intervals $\cI_\Delta$ and $\cI_{\delta+\Delta}$ such that the optimal $\Delta$ and $\delta+\Delta$ are contained in a member of $\cI_\Delta$ and $\cI_{\delta+\Delta}$, respectively. Since $|\cI_\Delta|+|\cI_{\delta+\Delta}|\leq n$, this enables us to reduce the original problem to $O(n^2)$ subproblems, in each of which $\Delta$ and $\delta+\Delta$ are restricted to lie within fixed intervals. The optimal solution of the original problem is then the best of the optimal solutions for these subproblems.
\medskip

\noindent \textbf{Step 3. Modifying the bound-constraints.} Fixing the intervals for $\Delta$ and $\delta+\Delta$ enables us to simplify the lower and upper bound-constraints on the coordinates of the deviation vector. This step changes neither the feasibility nor the set of optimal deviation vectors of the form $\p{\delta, \Delta}{\ell, u}{w}$ for the corresponding subproblem.
\medskip

\noindent \textbf{Step 4. Characterizing feasibility.} Once the bound-constraints are simplified, we can characterize the feasibility of the problem. This characterization is essential as, by recognizing infeasible instances, it serves as a stopping rule in the algorithm.    
\medskip

\noindent \textbf{Step 5. Solving the subproblems.} The idea of the algorithm is to eliminate bad sets in $\cF$ iteratively. Assume for a moment that no bounds are given on the deviation vector and that $w$ is the unit weight function. Let us call a set $F\in\cF$ \emph{bad} if it has smaller cost than the input solution $F^*$ with respect to the current cost function, \emph{small} if $|F|<|F^*|$ and \emph{large} if $|F|>|F^*|$. A small or large bad set can be eliminated by decreasing or increasing the cost on every element by the same value, respectively. Note that such a step does not change the span of the deviation vector. Therefore, it is not enough to concentrate on single bad sets, as otherwise it could happen that we jump back and forth between the same pair of small and large bad sets by alternately decreasing and increasing the costs. To avoid the algorithm changing the costs in a cyclic way, we keep track of the small and large bad sets that were found the latest. If in the next iteration we find a bad set $F$ having the same size as $F^*$, then we drop the latest small and large bad sets, if they exist, and eliminate $F$ on its own. However, if we find a small or large bad set $F$, then we eliminate it together with the latest large or small bad set, if exists, respectively. For arbitrary weights, the only difference is that the size of a set is measured in terms of $\frac{1}{w}$. 

Nevertheless, the presence of bound constraints makes the problem more complicated. The difficulty is partially due to the fact that even if we hit one of the bounds on some element, the cost of that element might change later on. This is in sharp contrast to the $\ei$-norm in~\cite{berczi2023infty}, where the cost of an element becomes fixed once it reaches one of the bounds. Fortunately, with the help of Step 3, we can overcome these difficulties.

\subsection{Guessing \texorpdfstring{$\boldsymbol{\Delta}$}{} and \texorpdfstring{$\boldsymbol{\delta+\Delta}$}{}}
\label{sec:guess}

Let us order the elements of the ground set $S = \{ s_1, \dots, s_n \}$ in such a way that $F^*= \{ s_1, \dots, s_{|F^*|} \}$. By the second half of Corollary~\ref{cor:inv_min_cost_span_opt_dev_vector}, we may assume that
\begin{equation*}
w(s_1) \cdot \ell(s_1) = \ldots = w \left( s_{|F^*|} \right) \cdot \ell \left( s_{|F^*|} \right) \ge w \left( s_{|F^*| + 1} \right) \cdot \ell \left( s_{|F^*| + 1} \right) \ge \ldots \ge w(s_n) \cdot \ell(s_n) \end{equation*}
and
\begin{equation*}
w(s_1) \cdot u(s_1) \ge \ldots \ge w \left( s_{|F^*|} \right) \cdot u \left( s_{|F^*|} \right) \ge w \left( s_{|F^*| + 1} \right) \cdot u \left( s_{|F^*| + 1} \right) = \ldots = w(s_n) \cdot u(s_n).
\end{equation*}
Indeed, the corollary says that $\delta+\Delta\geq\max\{w(s)\cdot\ell (s)\mid s\in S\}$, hence we may assume that $w(s)\cdot\ell(s)$ is exactly this maximum for the elements of $F^*$. Similarly, $\Delta\leq\min\{w(s)\cdot u(s)\mid s\in S\}$, hence we may assume that $w(s)\cdot u(s)$ is exactly this minimum for the elements of $S\setminus F^*$. Now order the elements of $S$ as follows: we start with the elements of $F^*$ in a decreasing order according to $w(s)\cdot u(s)$, then followed by the elements of $S\setminus F^*$ in a decreasing order according to $w(s)\cdot \ell(s)$. This gives an ordering as requested.

By the first half of Corollary~\ref{cor:inv_min_cost_span_opt_dev_vector}, we may assume that the value of $\Delta$ falls in one of the intervals 
\begin{align*}
\cI_\Delta
\coloneqq 
{}&{}\Big\{\,\left[w(s_{|F^*| + 1})\cdot\ell(s_{|F^*| + 1}),\,
w(s_{|F^*| + 1})\cdot u(s_{|F^*| + 1})\right]\,\Big\}\\
\cup\,
{}&{}\Big\{\,\left[w(s_{i+1})\cdot\ell(s_{i+1}),\,
w(s_i)\cdot \ell(s_i)\right]\,\bigm| i=|F^*|+1,\dots,n-1\,\Big\},
\end{align*}
and similarly, the value of $\delta+\Delta$ falls in one of the intervals
\begin{align*}
\cI_{\delta+\Delta}
\coloneqq 
{}&{}\Big\{\,\left[w(s_{|F^*|})\cdot\ell(s_{|F^*|1}),\,
w(s_{|F^*|})\cdot u(s_{|F^*|})\right]\,\Big\}\\
\cup\,
{}&{}\Big\{\,\left[w(s_{i})\cdot\ell(s_{i}),\,
w(s_{i-1})\cdot \ell(s_{i-1})\right]\,\bigm| i=2,\dots,|F^*|\,\Big\}.
\end{align*}
Therefore, we pair up the intervals of $\cI_\Delta$ and $\cI_{\delta+\Delta}$ in every possible way, and consider the problem of finding an optimal deviation vector $\p{\delta, \Delta}{\ell, u}{w}$ subject to the bounds on $\delta$ and $\delta+\Delta$. 

\subsection{Modifying the bound-constraints}
\label{sec:bounds}

By the definition of $\p{\delta, \Delta}{\ell, u}{w}$, if $\Delta \in \left[\, w(s_{i+1}) \cdot \ell(s_{i+1}),\, w(s_{i}) \cdot \ell(s_{i})\, \right]$ for some $i \in \{{|F^*| + 1}, \allowbreak \ldots, n \}$, then $\p{\delta, \Delta}{\ell, u}{w}(s_j) = \ell(s_j)$ holds for each $j \in \left\{ |F^*| + 1, |F^*| + 2, \ldots, i \right\}$. Similarly, if $\delta + \Delta \in \left[\, w(s_{i}) \cdot u(s_{i}),\, w(s_{i-1}) \cdot u(s_{i-1})\, \right]$ for some $i \in \{2,\dots,|F^*|\}$, then $\p{\delta, \Delta}{\ell, u}{w}(s_j) = u(s_j)$ holds for each $j \in \{ i+1, \ldots, |F^*| \}$. These observations together with Corollary~\ref{cor:inv_min_cost_span_opt_dev_vector} imply that it suffices to consider instances of the minimum-cost inverse optimization problem where the lower and upper bounds are of the form
\medskip

{\centering
\noindent\fbox{\begin{minipage}{0.97\textwidth}
\begin{center}
    \textbf{\linkdest{spec-lu}{SPEC-LU}}
\end{center}
\vspace{-0.4cm}
\begin{gather*}
\ell(s)\coloneqq
\begin{cases}
0 & \text{if $s \in S_0$,} \\
\lin/w(s) & \text{if $s \in F^*\setminus S_0$,} \\
\lout/w(s) & \text{otherwise,}
\end{cases} \hspace{1cm} 
u(s)\coloneqq 
\begin{cases}
0 & \text{if $s \in S_0$,} \\
\uin/w(s) & \text{if $s \in F^*\setminus  S_0$,} \\
\uout/w(s) & \text{otherwise}
\end{cases}
\end{gather*}
for some $S_0 \subseteq S$, $\lin, \lout \in \mathbb{R} \cup \{ - \infty \}$ and $\uin, \uout \in \mathbb{R} \cup \{ + \infty \}$ satisfying $\lin \le \uin$, $\lout \le \uout$, $\lin \ge 0$ if $S_0 \cap F^* \ne \emptyset$, $\uout \le 0$ if $S_0\setminus  F^* \ne \emptyset$,  
\begin{align*}
\max \left\{ w(s) \bigm| s \in S\setminus  F^* \right\} \cdot \lout &\le \min \left\{ w(s) \bigm| s \in F^* \right\} \cdot \lin,\ \text{and}\\
\max \left\{ w(s) \bigm| s \in S\setminus  F^* \right\} \cdot \uout &\le \min \left\{ w(s) \bigm| s \in F^* \right\} \cdot \uin.
\end{align*}
\end{minipage}}}
\medskip

\noindent We refer to the set of these properties as \hyperlink{spec-lu}{\normalfont(SPEC-LU)}. For ease of discussion, we define
\[\mu(X)\coloneqq \tfrac{1}{w}(X\setminus S_0)=\sum_{s \in X \setminus S_0} \frac{1}{w(s)} .\]
The function $\mu$ plays a key role in the rest of the proof.


\subsection{Characterizing feasibility}
\label{sec:feasibility}

The feasibility of the modified problem is characterized by the following lemma.

\begin{lemma} \label{lem:inv_min_cost_span_n&s_cond_for_infeas}
Let $\big( S, \mathcal{F}, F^*, c, \ell, u, \spa_w (\cdot) \big)$ be a minimum-cost inverse optimization problem, where $\ell$ and $u$ satisfy \hyperlink{spec-lu}{\normalfont(SPEC-LU)}. Let 
\begin{align*}
m_1&\coloneqq \min \left\{ \frac{c(F) - c(F^*) + \uin \cdot \mu\left( F^* \setminus F \right)}{\mu\left( F \setminus  F^* \right)} \Biggm| F \in \mathcal{F}, \, F\setminus  S_0 \nsubseteq F^*\setminus  S_0 \right\},\\[2pt]
m_2&\coloneqq \max \left\{ \frac{c(F^*) - c(F) + \lout \cdot \mu\left( F\setminus F^*\right)}{\mu\left( F^*\setminus  F \right)} \Biggm|
F \in \mathcal{F}, F^*\setminus S_0 \nsubseteq F\setminus  S_0 \right\},\\[2pt]
m_3&\coloneqq \min \left\{ \frac{c(F) - c(F^*)}{\mu\left( F \setminus  F^* \right)} \Biggm| F \in \mathcal{F}, F\setminus  S_0 \nsubseteq F^*\setminus  S_0 \right\}, \\[2pt]
m_4&\coloneqq \max \left\{ \frac{c(F^*) - c(F)}{\mu\left( F^*\setminus  F \right)} \Biggm| F \in \mathcal{F}, F^*\setminus  S_0 \nsubseteq F\setminus  S_0 \right\}.
\end{align*}
\begin{enumerate}[label=(\alph*)]\itemsep0em
\item If $\uin \ne + \infty$ and $\lout \ne - \infty$, then the minimum-cost inverse optimization problem is feasible if and only if $\p{\uin - \lout, \lout}{\ell, u}{w}$ is a feasible deviation vector.

\item If $\uin \ne + \infty$ and $\lout = - \infty$, then the minimum-cost inverse optimization problem is feasible if and only if $\p{\uin - m'_1, m'_1}{\ell, u}{w}$ is a feasible deviation vector, where
\begin{equation*}
m'_1 \coloneqq  
\begin{cases}
m_1 & \text{if $m_1 \ne + \infty$,} \\
0 & \text{otherwise.}
\end{cases}
\end{equation*}

\item If $\uin = + \infty$ and $\lout \ne - \infty$, then the minimum-cost inverse optimization problem is feasible if and only if $\p{m'_2 - \lout, \lout}{\ell, u}{w}$ is a feasible deviation vector, where
\begin{equation*}
m'_2 \coloneqq  
\begin{cases}
m_2 & \text{if $m_2 \ne - \infty$,} \\
0 & \text{otherwise.}
\end{cases}
\end{equation*}

\item If $\uin = + \infty$ and $\lout = - \infty$, then the minimum-cost inverse optimization problem is feasible if and only if $\p{m'_4-m'_3, m'_3}{\ell, u}{w}$ is a feasible deviation vector, where
\begin{equation*}
m'_3  \coloneqq  \begin{cases}
m_3 & \text{if $m_3 \ne + \infty$,} \\
0 & \text{otherwise,}
\end{cases}\quad \text{and}\quad 
m'_4  \coloneqq  \begin{cases}
m_4 & \text{if $m_4 \ne - \infty$,} \\
0 & \text{otherwise.}
\end{cases}
\end{equation*}
\end{enumerate}
\end{lemma}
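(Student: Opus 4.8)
The four cases are structurally parallel, so the plan is to carry out case~(a) in detail and then indicate the modifications needed for (b)--(d). In every case, the ``only if'' direction is immediate: the candidate vector $\p{\cdot,\cdot}{\ell,u}{w}$ exhibited in the statement always satisfies the bound constraint~\ref{it:b} by the definition of $\p{}{}{}$ and by \hyperlink{spec-lu}{(SPEC-LU)} (which guarantees, for instance, that $\lout\le 0\le\lin$ on $S_0$ so the truncations are consistent), so if no feasible deviation vector exists at all, it certainly does not. Hence the content is entirely in the ``if'' direction: assuming the problem is feasible, we must show that this particular choice of $(\delta,\Delta)$ already yields a feasible deviation vector.

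For case~(a), the choice is $\Delta=\lout$ and $\delta+\Delta=\uin$, i.e.\ we push the elements of $S\setminus F^*$ down to the smallest value allowed ($\Delta$ as small as possible) and the elements of $F^*$ up to the largest value allowed ($\delta+\Delta$ as large as possible). Intuitively this is the ``most helpful'' deviation vector for making $F^*$ optimal, since it decreases the cost of $F^*$ and increases the cost of every other set as much as the bounds permit; so if any feasible vector of the form $\p{\delta,\Delta}{\ell,u}{w}$ exists, this one should work. To make this precise I would take an arbitrary feasible $p$, apply Corollary~\ref{cor:inv_min_cost_span_opt_dev_vector} to assume $p=\p{\delta_0,\Delta_0}{\ell,u}{w}$ with $\Delta_0\ge\lout$ (wait: $\Delta_0$ is the min of $w(s)p(s)$, and on $S\setminus S_0$ we have $w(s)p(s)\ge w(s)\ell(s)=\lout$, while on $S_0$ the coordinate is $0\ge\uout$; under \hyperlink{spec-lu}{(SPEC-LU)} one checks $\Delta_0\ge\lout$) and $\delta_0+\Delta_0\le\uin$. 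Then I would run the computation in the proof of Lemma~\ref{lem:inv_min_cost_span_deltas}: for any $F\in\cF$,
\begin{align*}
(c-\p{\uin-\lout,\lout}{\ell,u}{w})(F^*)-(c-\p{\uin-\lout,\lout}{\ell,u}{w})(F)
&= c(F^*)-c(F)-\!\!\sum_{s\in F^*\setminus F}\!\!\p{\uin-\lout,\lout}{\ell,u}{w}(s)+\!\!\sum_{s\in F\setminus F^*}\!\!\p{\uin-\lout,\lout}{\ell,u}{w}(s).
\end{align*}
On $F^*\setminus F$ the new vector's coordinate is $\min\{\uin/w(s),u(s)\}=p(s)$ on $S_0$ (both $0$) and $\ge p(s)$ off $S_0$ because $\delta_0+\Delta_0\le\uin$; on $F\setminus F^*$ the new vector's coordinate is $\max\{\lout/w(s),\ell(s)\}\le p(s)$ for the symmetric reason. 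Hence the displayed difference is $\le (c-p)(F^*)-(c-p)(F)\le 0$, so \ref{it:a} holds, and \ref{it:b} holds by construction; thus $\p{\uin-\lout,\lout}{\ell,u}{w}$ is feasible.

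For case~(b), $\lout=-\infty$ forces $S_0\setminus F^*=\emptyset$ (else $\uout\le 0$ combined with $\lout=-\infty$ is still fine, but the relevant point is that $\mu$ restricted to $S\setminus F^*$ is genuine), and the lower push on $S\setminus F^*$ is unbounded, so we can no longer simply set $\Delta=\lout$; instead the smallest $\Delta$ compatible with feasibility is exactly $m_1'$. I would verify that $\Delta=m_1'$ is the infimum, over feasible instances, of $\min_s w(s)p(s)$: the constraint ``$F^*$ beats $F$'' with $\delta+\Delta=\uin$ fixed rearranges (using $\mu$) to $\Delta\ge\big(c(F)-c(F^*)+\uin\cdot\mu(F^*\setminus F)\big)/\mu(F\setminus F^*)$ for every $F$ with $F\setminus S_0\nsubseteq F^*\setminus S_0$, whose supremum over such $F$ is $m_1$ — hence $\Delta=m_1'$ works (when $m_1=+\infty$ there is no such $F$ and $\Delta=0$ is a harmless choice), and the same telescoping argument as in~(a) shows feasibility. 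Case~(c) is the mirror image with the roles of $\uin/F^*$ and $\lout/(S\setminus F^*)$ swapped, using $m_2$ (a max, because it bounds $\delta+\Delta$ from below). Case~(d) combines both: $\Delta=m_3'$ and $\delta+\Delta=m_4'$ are the tightest simultaneously-achievable values, and one checks $m_3'\le m_4'$ (equivalently $\delta\ge 0$) holds whenever the instance is feasible — this sign/ordering check, together with bookkeeping that $m_3',m_4'$ respect all the \hyperlink{spec-lu}{(SPEC-LU)} inequalities, is the one genuinely fiddly point. I expect the main obstacle to be precisely this: not the telescoping inequality (which is essentially Lemma~\ref{lem:inv_min_cost_span_deltas} reused verbatim), but the careful case analysis of when the minimand/maximand defining $m_1,\dots,m_4$ is over an empty family, how the $+\infty$/$-\infty$ conventions interact with $\uin,\lout$ possibly being infinite, and verifying that the resulting $(\delta,\Delta)$ still lies in the admissible region carved out by \hyperlink{spec-lu}{(SPEC-LU)} and Corollary~\ref{cor:inv_min_cost_span_opt_dev_vector}.
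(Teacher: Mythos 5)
Your case (a) is correct and matches the paper's argument: you take a feasible $p$, observe that $\p{\uin-\lout,\lout}{\ell,u}{w}$ dominates $p$ coordinate-wise on $F^*$ and is dominated by $p$ on $S\setminus F^*$ (because both coordinates hit the truncation bounds exactly under \hyperlink{spec-lu}{(SPEC-LU)}), and telescope. (Small terminology slip: you call this the ``if'' direction, but since the lemma reads ``feasible iff the candidate is feasible,'' what you are proving is the ``only if'' direction; the paper labels it the same way.)

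The genuine gap is in cases (b)--(d), where you claim ``the same telescoping argument as in~(a) shows feasibility.'' It does not. The telescoping in (a) hinges on the element-wise dominance $\p{\uin-\lout,\lout}{\ell,u}{w}(s)\le p(s)$ for $s\in S\setminus F^*$, which holds because the candidate value is exactly the lower truncation bound $\ell(s)=\lout/w(s)$. In case (b), however, $\lout=-\infty$ and the candidate value on $S\setminus(F^*\cup S_0)$ is $m'_1/w(s)$, which is \emph{not} a bound; writing $p=\p{\delta_0,\Delta_0}{\ell,u}{w}$, nothing prevents $\Delta_0<m'_1$, in which case $m'_1/w(s)>p(s)$ and the dominance fails. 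The paper handles this by a case split on $F$: when $F\setminus S_0\nsubseteq F^*\setminus S_0$ (so $\mu(F\setminus F^*)>0$), the inequality $c(F^*)-c(F)-\uin\cdot\mu(F^*\setminus F)+m'_1\cdot\mu(F\setminus F^*)\le 0$ follows \emph{purely algebraically} from the definition of $m_1$ as a minimum over such $F$ — $p$ plays no role at all; only when $\mu(F\setminus F^*)=0$, so the $\Delta$-term vanishes, does the comparison with $p$ (restricted to $F^*\setminus F$, where dominance still holds) finish the job. In case (d) the dominance fails on both sides and both $m_3$ and $m_4$ must be used algebraically. So the $m_i$'s are not merely the ``tightest achievable'' endpoints to be verified afterwards as you suggest; they are precisely what replaces the telescoping with $p$ on the side where no finite bound exists.

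There is also a sign error in your rearrangement for (b): with $\delta+\Delta=\uin$ fixed, the constraint $(c-p)(F^*)\le(c-p)(F)$ reads $c(F^*)-c(F)-\uin\cdot\mu(F^*\setminus F)+\Delta\cdot\mu(F\setminus F^*)\le 0$, which gives $\Delta\le\big(c(F)-c(F^*)+\uin\cdot\mu(F^*\setminus F)\big)/\mu(F\setminus F^*)$ — an \emph{upper} bound on $\Delta$, so the binding constraint is the minimum over $F$, which is exactly $m_1$. You write $\Delta\ge(\cdots)$ and take a supremum, which is backwards on both counts and would leave you stuck if you tried to carry out the computation.
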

\begin{proof}
In all cases, the `if' direction is straightforward, hence we prove the `only if' direction. 

Assume first that $\uin \ne + \infty$ and $\lout \ne - \infty$, and let $p$ be a feasible deviation vector. For any $F \in \mathcal{F}$, we have
\begin{align*}
&( c - \p{\uin - \lout, \lout}{\ell, u}{w} )(F^*) - ( c - \p{\uin - \lout, \lout}{\ell, u}{w} )(F)\\[2pt]
{}&{}~~= 
\left( c(F^*) - \sum_{s \in F^*\setminus S_0} \frac{\uin}{w(s)} \right) - \left( c(F) - \sum_{s \in (F\setminus  S_0) \cap (F^*\setminus  S_0)} \frac{\uin}{w(s)} - \sum_{s \in (F\setminus  S_0)\setminus (F^*\setminus S_0)} \frac{\lout}{w(s)} \right) \\[2pt]
{}&{}~~= 
c(F^*) - c(F) - \sum_{s \in (F^*\setminus  S_0)\setminus  (F\setminus  S_0)} \frac{\uin}{w(s)} + \sum_{s \in (F\setminus  S_0)\setminus  (F^*\setminus  S_0)} \frac{\lout}{w(s)} \\[2pt]
{}&{}~~= 
c(F^*) - c(F) - \sum_{s \in (F^*\setminus  S_0)\setminus  (F\setminus  S_0)} u(s) + \sum_{s \in (F\setminus  S_0)\setminus  (F^*\setminus  S_0)} \ell(s) \\[2pt]
{}&{}~~\le 
c(F^*) - c(F) - \sum_{s \in (F^*\setminus  S_0)\setminus  (F\setminus  S_0)} p(s) + \sum_{s \in (F\setminus  S_0)\setminus  (F^*\setminus  S_0)} p(s) \\[2pt]
{}&{}~~= 
\big( c(F^*) - p(F^*) \big) - \big( c(F) - p(F) \big)\\[2pt]
{}&{}~~\le 
0,
\end{align*}
implying that $\p{\uin - \lout, \lout}{\ell, u}{w}$ is also feasible.

Now assume that $\uin \ne + \infty$ and $\lout = - \infty$, let again $p$ be a feasible deviation vector and $F\in\cF$ be arbitrary. If $\mu(F \setminus  F^*) \ne 0$, i.e.\ $F\setminus  S_0 \nsubseteq F^*\setminus  S_0$, then we get
\begin{align*}
&( c - \p{\uin - m'_1, m'_1}{\ell, u}{w} )(F^*) - ( c - \p{\uin - m'_1, m'_1}{\ell, u}{w} )(F) \\[2pt]
{}&{}~~= 
c(F^*) - c(F) - \sum_{s \in (F^*\setminus  S_0)\setminus  (F\setminus  S_0)} \frac{\uin}{w(s)} + \sum_{s \in (F\setminus  S_0)\setminus  (F^*\setminus  S_0)} \frac{m'_1}{w(s)} \\[2pt]
{}&{}~~= 
c(F^*) - c(F) - \uin \cdot \mu(F^*\setminus  F) + m'_1 \cdot \mu(F \setminus  F^*)\\[2pt]
{}&{}~~\leq
0,
\end{align*}
where the last inequality follows from the definition of $m'_1$. On the other hand, if $\mu(F \setminus  F^*) = 0$, i.e.\ $(F\setminus  S_0) \setminus (F^*\setminus  S_0) = \emptyset$, then we get 
\begin{align*}
&( c - \p{\uin - m'_1, m'_1}{\ell, u}{w} )(F^*) - ( c - \p{\uin - m'_1, m'_1}{\ell, u}{w} )(F) \\[2pt]
{}&{}~~= 
c(F^*) - c(F) - \sum_{s \in (F^*\setminus  S_0)\setminus  (F\setminus  S_0)} \frac{\uin}{w(s)} + \sum_{s \in (F\setminus  S_0)\setminus  (F^*\setminus  S_0)} \frac{m'_1}{w(s)} \\[2pt]
{}&{}~~= 
c(F^*) - c(F) - \sum_{s \in (F^*\setminus  S_0)\setminus  (F\setminus  S_0)} u(s) + 0 \\[2pt]
{}&{}~~\le 
c(F^*) - c(F) - \sum_{s \in (F^*\setminus  S_0)\setminus  (F\setminus  S_0)} p(s) + \sum_{s \in (F\setminus  S_0) \setminus (F^*\setminus  S_0)} p(s) \\[2pt]
{}&{}~~= 
(c-p)(F^*) - (c-p)(F)\\[2pt]
{}&{}~~\le 
0,
\end{align*}
implying that $\p{\uin - m'_1, m'_1}{\ell, u}{w}$ is also feasible.

The remaining two cases can be verified analogously.
\end{proof}

\subsection{Solving the subproblems} 
\label{sec:subproblem}

Finally, we derive an algorithm for solving a subproblem obtained after modifying the bound-constraints. The algorithm is presented as Algorithm~\ref{algo:inv_min_cost_span}. By convention, undefined objects are denoted by $\ast$. We slightly modify the notion of small and large bad sets from the beginning of Section~\ref{sec:algorithm} by calling a set $F\in\cF$ small if $\mu(F)<\mu(F^*)$ and large if $\mu(F)>\mu(F^*)$. The high level description of the algorithm is as follows. At each iteration $i$, we distinguish five main cases depending on the bad sets eliminated in that iteration. In Case 1, we eliminate a bad set $F_i$ having the same size as $F^*$. In Case 2, we eliminate a small bad set $F_i$ alone. In Case 3, we eliminate a small bad set $F_i$ together with a large bad set $Z_i$ that was found before. In Case 4, we eliminate a large bad set $F_i$ alone. Finally, in Case 5, we eliminate a large bad set $F_i$ together with a small bad set $X_i$ that was found before. 

In all cases, first we determine the values of $\Delta$ (corresponding to the change on the elements of $S\setminus F^*$) and $\delta+\Delta$ (corresponding the change on the elements of $F^*$) needed to eliminate the bad sets in question as if no bound-constraints were given. In Cases~\hyperlink{1.1}{1.1}, \hyperlink{2.1.1}{2.1.1}, \hyperlink{3.1.1}{3.1.1}, \hyperlink{4.1.1}{4.1.1} and~\hyperlink{5.1.1}{5.1.1}, the resulting deviation vector do not violate the bound-constrains, hence we apply the changes directly. In the remaining cases however, we hit a bound-constraint either on all the elements of $F^*$, or on those of $S\setminus F^*$, or both (recall that the bound-constraints are assumed to satisfy \hyperlink{spec-lu}{(SPEC-LU)} at this point). In such situations, we set the changes on the problematic elements to the extreme, that is, to the lower or upper bound that was violated, recompute the necessary changes on the remaining elements to eliminate the bad sets in question, and set the deviation vector accordingly if the bound-constraints are met, otherwise conclude that the problem is infeasible. The algorithm considers all the possible scenarios, and shows how to handle these cases using the values computed by the functions $f_1, \ldots, f_{12}$ defined below.

The algorithm and its analysis is rather technical, and requires the discussion of several cases. Nevertheless, it is worth emphasizing that in each step we apply the natural modification to the deviation vector that is needed to eliminate the current bad set or sets. At this point, the reader may rightly ask whether it is indeed necessary to consider all these cases. The answer is unfortunately yes, as the proposed Newton-type scheme may run into any of them, see Figure~\ref{fig:cases}. 

For ease of discussion, we introduce several notation before stating the algorithm. Let
 \begin{align*}
  \mathcal{F}' & \coloneqq  \left\{ F' \in \mathcal{F} \, \middle| \, \mu(F') < \mu(F^*) \right\} , \\
  \widehat{\mathcal{F}}' & \coloneqq  \left\{ F' \in \mathcal{F} \, \middle| \, \mu(F') < \mu(F^*), \, F'\setminus S_0 \nsubseteq F^*\setminus S_0 \right\} , \\[2pt]
  \mathcal{F}'' & \coloneqq  \left\{ F'' \in \mathcal{F} \, \middle| \, \mu(F'') = \mu(F^*), \, F''\setminus S_0 \ne F^*\setminus S_0 \right\} , \\[2pt]
  \mathcal{F}''' & \coloneqq  \left\{ F''' \in \mathcal{F} \, \middle| \, \mu(F''') > \mu(F^*) \right\} , \\[2pt]
  \widehat{\mathcal{F}}''' & \coloneqq  \left\{ F''' \in \mathcal{F} \, \middle| \, \mu(F''') > \mu(F^*), \, F^*\setminus S_0 \nsubseteq F'''\setminus S_0 \right\}.
 \end{align*} 
In each iteration, the algorithm computes an optimal solution of the underlying optimization problem, and if the cost of the input solution $F^*$ is strictly larger than the current optimum, then it updates the costs using a value determined by one of the following functions: 
\begin{enumerate}[label=(f\arabic*)]\itemsep5pt
\item~~$f_1(c,F'')\coloneqq \displaystyle \frac{c(F^*) - c(F'')}{\mu\left( F^*\setminus F'' \right)}$,
\item ~~$f_2(c,d,D,F'')\coloneqq \displaystyle \uin - d - D - \frac{c(F^*) - c(F'')}{\mu\left( F^*\setminus F'' \right)}$,
\item ~~$f_3(c,F)\coloneqq \displaystyle \frac{c(F^*) - c(F)}{\mu(F^*) - \mu(F)}$,
\item ~~$f_4(c,d,D,F')\coloneqq \displaystyle \frac{c(F') - c(F^*) + (\uin - d - D) \cdot \mu\left( F^*\setminus F'\right)}{\mu\left( F'\setminus F^*\right)}$,
\item ~~$f_5(c,d,D,F')\coloneqq \displaystyle \frac{c(F^*) - c(F') - (\uin - d - D) \cdot \big( \mu(F^*) - \mu(F') \big)}{\mu\left( F'\setminus F^*\right)}$,
\item ~~$f_6(c,D,F')\coloneqq \displaystyle \frac{c(F^*) - c(F') - (\uout - D) \cdot \big( \mu(F^*) - \mu(F') \big)}{\mu\left( F^*\setminus F'\right)}$,
\item ~~$f_7(c,F',F''')\coloneqq\frac{\hphantom{x} \displaystyle \frac{c(F^*) - c(F')}{\vphantom{\Big|} \mu(F^*) - \mu(F')} - \frac{c(F^*) - c(F''')}{\vphantom{\Big|} \mu(F^*) - \mu(F''')}\hphantom{x}}{\hphantom{x} \displaystyle \frac{\vphantom{\Big|} \mu\left( F^*\setminus F'\right)}{\vphantom{\Big|} \mu(F^*) - \mu(F')} - \frac{\vphantom{\Big|} \mu\left( F^*\setminus F''' \right)}{\vphantom{\Big|} \mu(F^*)- \mu(F''')} \hphantom{x}}$,
\item ~~$f_8(c,F',F''')\coloneqq\displaystyle \frac{c(F^*) - c(F') - f_7(c,F',F''') \cdot \mu\left( F^*\setminus F'\right)}{\mu(F^*) - \mu(F')}$,
\item ~~$f_9(c,D,F''')\coloneqq\displaystyle \frac{c(F^*) - c(F''') - (\lout - D) \cdot \big( \mu(F^*) - \mu(F''') \big)}{\mu\left( F^*\setminus F'''\right)}$,
\item ~~$f_{10}(c,d,D,F''')\coloneqq\displaystyle \frac{c(F''') - c(F^*) + (\lin - d - D) \cdot \mu\left( F^*\setminus F'''\right)}{\mu\left( F'''\setminus F^* \right)}$,
\item ~~$f_{11}(c,d,D,F''')\coloneqq\displaystyle \frac{c(F^*) - c(F''') - (\lin - d - D) \cdot \big( \mu(F^*) - \mu(F''') \big)}{\mu\left( F'''\setminus F^* \right)}$,
\item ~~$f_{12}(c,F',F''')\coloneqq \displaystyle \frac{c(F^*) - c(F''') - f_7(c,F',F''') \cdot \mu\left( F^*\setminus F''' \right)}{\mu(F^*) - \mu(F''')}$.
\end{enumerate}

In the definitions above, we have $F\in\cF'\cup\cF'''$, $F'\in\cF'$, $F''\in\cF''$, $F'''\in\cF'''$, $c\in\mathbb{R}^S$ and $d,D\in\mathbb{R}$ almost always, except -- to avoid division by zero -- for $f_4$ and $f_5$ where $F'\in \widehat{\mathcal{F}}'$, and for $f_9$ where $F'''\in\widehat{\cF}'''$.

\begin{algorithm}[H] \label{algo:inv_min_cost_span}
\caption{Algorithm for the minimum-cost inverse optimization problem under the weighted span objective with bound-constraints satisfying \protect\hyperlink{spec-lu}{(SPEC-LU)}.}
\DontPrintSemicolon

\KwIn{A minimum-cost inverse optimization problem $\big( S, \mathcal{F}, F^*, c, \ell, u, \spa_w( \cdot ) \big)$ with bound constraints satisfying \hyperlink{spec-lu}{(SPEC-LU)}, and an oracle $\cO$ for the minimum-cost optimization problem $(S,\cF,c')$ with any cost function $c'$.}
\KwOut{An optimal deviation vector if the problem is feasible, otherwise \texttt{Infeasible}.}

\bigskip

$\delta_0 \gets \max \{ \lin - \uout, 0 \}$,\quad $\Delta_0 \gets \begin{cases}
                 \uout & \text{if $\uout \ne + \infty$}, \\
                 \lin & \text{if $\uout = + \infty$ and $\lin \ne - \infty$}, \\
                 0 & \text{otherwise}.
                \end{cases}$\;
$d_0 \gets \delta_0$, \quad $D_0 \gets \Delta_0$\;
$X_0 \gets \ast$, \quad $Y_0 \gets \ast$, \quad $Z_0 \gets \ast$\;
$c_0 \gets c - \p{d_0, D_0}{\ell, u}{w}$\;
$F_0 \gets \text{minimum $c_0$-cost member of $\mathcal{F}$ determined by $\mathcal{O}$}$\;
$i \gets 0$\;
\SetAlgoLined
\Whileblock{$c_i(F^*) > c_i(F_i)$}{
    \SetAlgoVlined
    \uIf{$\mu(F_i) = \mu(F^*)$}{
        $X_{i+1} \gets \ast$, \quad $Y_{i+1} \gets F_i$, \quad $Z_{i+1} \gets \ast$\;
        \uIf{$d_i + D_i + f_1(c_i, F_i) \le \uin$}{
            $\delta_{i+1} \gets f_1(c_i, F_i)$, \qquad $\Delta_{i+1} \gets 0$ \tcp*[r]{\linkdest{1.1}Case~1.1~~~~}
        }
        \Else{
            \uIf{$D_i + f_2(c_i, d_i, D_i, F_i) \ge \lout$}{
                $\delta_{i+1} \gets f_1(c_i, F_i)$, \qquad $\Delta_{i+1} \gets f_2(c_i, d_i, D_i, F_i)$ \tcp*[r]{\linkdest{1.2.1}Case~1.2.1~~}
            }
            \Else{
                \Return{\tt Infeasible} \tcp*[r]{\linkdest{1.2.2}Case~1.2.2~~}
            }
        }
    }
    \uIf{$\mu(F_i) < \mu(F^*)$}{
        \uIf{$Z_i = \ast$}{
            $X_{i+1} \gets F_i$, \quad $Y_{i+1} \gets Y_i$, \quad $Z_{i+1} \gets Z_i$\;
            \uIf{$D_i + f_3(c_i, F_i) \le \uout$}{
                \uIf{$d_i + D_i + f_3(c_i, F_i) \le \uin$}{
                    $\delta_{i+1} \gets 0$, \quad $\Delta_{i+1} \gets f_3(c_i, F_i)$ \tcp*[r]{\linkdest{2.1.1}Case~2.1.1~~}
                }
                \Else{
                    \uIf{$F_i\setminus S_0 \nsubseteq F^*\setminus S_0$ {\bf and} $D_i + f_4(c_i, d_i, D_i, F_i) \ge \lout$}{
                        $\delta_{i+1} \gets f_5(c_i, d_i, D_i, F_i)$, \quad $\Delta_{i+1} \gets f_4(c_i, d_i, D_i, F_i)$ \MyComment{\linkdest{2.1.2.1}Case~2.1.2.1}
                    }
                    \Else{
                        \Return{\tt Infeasible} \tcp*[r]{\linkdest{2.1.2.2}Case~2.1.2.2}
                    }
                }
            }
            \Else{
                \uIf{$d_i + f_6(c_i, D_i, F_i) \le \uin - \uout$}{
                    $\delta_{i+1} \gets f_6(c_i, D_i, F_i)$, \quad $\Delta_{i+1} \gets \uout - D_i$ \tcp*[r]{\linkdest{2.2.1}Case~2.2.1~~}
                }
                \Else{
                    \uIf{$F_i\setminus S_0 \nsubseteq F^*\setminus S_0$ {\bf and} $D_i + f_4(c_i, d_i, D_i, F_i) \ge \lout$}{
                        $\delta_{i+1} \gets f_5(c_i, d_i, D_i, F_i)$, \quad $\Delta_{i+1} \gets f_4(c_i, d_i, D_i, F_i)$ \MyComment{\linkdest{2.2.2.1}Case~2.2.2.1}
                    }
                    \Else{
                        \Return{\tt Infeasible} \tcp*[r]{\linkdest{2.2.2.2}Case~2.2.2.2}
                    }
                }
            }
        }
    }
}
\end{algorithm}


\begin{algorithm}
\DontPrintSemicolon

\setcounter{AlgoLine}{35}

\SetAlgoLined
\nonl \Begin{ \vspace{-14.5pt}
    \nonl \Begin{
        \SetAlgoVlined
        \Else{
            \uIf{$D_i + f_8(c_i, F_i, Z_i) \le \uout$}{
                \uIf{$d_i + D_i + f_7(c_i, F_i, Z_i) + f_8(c_i, F_i, Z_i) \le \uin$}{
                    $X_{i+1} \gets F_i$, \quad $Y_{i+1} \gets Y_i$, \quad $Z_{i+1} \gets Z_i$\;
                    $\delta_{i+1} \gets f_7(c_i, F_i, Z_i)$, \quad $\Delta_{i+1} \gets f_8(c_i, F_i, Z_i)$ \tcp*[r]{\linkdest{3.1.1}Case~3.1.1~~}
                }
                \Else{
                    $X_{i+1} \gets F_i$, \quad $Y_{i+1} \gets Y_i$, \quad $Z_{i+1} \gets \ast$\;
                    \uIf{$F_i\setminus S_0 \nsubseteq F^*\setminus S_0$ {\bf and} $D_i + f_4(c_i, d_i, D_i, F_i) \ge \lout$}{
                        $\delta_{i+1} \gets f_5(c_i, d_i, D_i, F_i)$, \quad $\Delta_{i+1} \gets f_4(c_i, d_i, D_i, F_i)$ \MyComment{\linkdest{3.1.2.1}Case~3.1.2.1}
                    }
                    \Else{
                        \Return{\tt Infeasible} \tcp*[r]{\linkdest{3.1.2.2}Case~3.1.2.2}
                    }
                }
            }
            \Else{
                $X_{i+1} \gets F_i$, \quad $Y_{i+1} \gets Y_i$, \quad $Z_{i+1} \gets \ast$\;
                \uIf{$d_i + f_6(c_i, D_i, F_i) \le \uin - \uout$}{
                    $\delta_{i+1} \gets f_6(c_i, D_i, F_i)$, \quad $\Delta_{i+1} \gets \uout - D_i$ \tcp*[r]{\linkdest{3.2.1}Case~3.2.1~~}
                }
                \Else{
                    \uIf{$F_i\setminus S_0 \nsubseteq F^*\setminus S_0$ {\bf and} $D_i + f_4(c_i, d_i, D_i, F_i) \ge \lout$}{
                        $\delta_{i+1} \gets f_5(c_i, d_i, D_i, F_i)$, \quad $\Delta_{i+1} \gets f_4(c_i, d_i, D_i, F_i)$ \MyComment{\linkdest{3.2.2.1}Case~3.2.2.1}
                    }
                    \Else{
                        \Return{\tt Infeasible} \tcp*[r]{\linkdest{3.2.2.2}Case~3.2.2.2}
                    }
                }
            }
        }
    }



    \SetAlgoVlined
    \uIf{$\mu(F_i) > \mu(F^*)$}{
        \uIf{$X_i = \ast$}{
            $X_{i+1} \gets X_i$, \quad $Y_{i+1} \gets Y_i$, \quad $Z_{i+1} \gets F_i$\;
            \uIf{$d_i + D_i + f_3(c_i, F_i) \ge \lin$}{
                \uIf{$D_i + f_3(c_i, F_i) \ge \lout$}{
                    $\delta_{i+1} \gets 0$, \quad $\Delta_{i+1} \gets f_3(c_i, F_i)$ \tcp*[r]{\linkdest{4.1.1}Case~4.1.1~~}
                }
                \Else{
                    \uIf{$F^*\setminus S_0 \nsubseteq F_i\setminus S_0$ {\bf and} $d_i + f_9(c_i, D_i, F_i) \le \uin - \lout$}{
                        $\delta_{i+1} \gets f_9(c_i, D_i, F_i)$, \quad $\Delta_{i+1} \gets \lout - D_i$ \tcp*[r]{\linkdest{4.1.2.1}Case~4.1.2.1}
                    }
                    \Else{
                        \Return{\tt Infeasible} \tcp*[r]{\linkdest{4.1.2.2}Case~4.1.2.2}
                    }
                }
            }
            \Else{
                \uIf{$D_i + f_{10}(c_i, d_i, D_i, F_i) \ge \lout$}{
                    $\delta_{i+1} \gets f_{11}(c_i, d_i, D_i, F_i)$, \quad $\Delta_{i+1} \gets f_{10}(c_i, d_i, D_i, F_i)$ \MyComment{\linkdest{4.2.1}Case~4.2.1~~}
                }
                \Else{
                    \uIf{$F^*\setminus S_0 \nsubseteq F_i\setminus S_0$ {\bf and} $d_i + f_9(c_i, D_i, F_i) \le \uin - \lout$}{
                        $\delta_{i+1} \gets f_9(c_i, D_i, F_i)$, \quad $\Delta_{i+1} \gets \lout - D_i$ \tcp*[r]{\linkdest{4.2.2.1}Case~4.2.2.1}
                    }
                    \Else{
                        \Return{\tt Infeasible} \tcp*[r]{\linkdest{4.2.2.2}Case~4.2.2.2}
                    }
                }
            }
        }
    }
}
\end{algorithm}

\newpage

\begin{algorithm}
\DontPrintSemicolon

\setcounter{AlgoLine}{74}
\nonl \Begin{ \vspace{-14.5pt}
    \nonl \Begin{
        \Else{
            \uIf{$d_i + D_i + f_7(c_i, X_i, F_i) + f_{12}(c_i, X_i, F_i) \ge \lin$}{
                \uIf{$D_i + f_{12}(c_i, X_i, F_i) \ge \lout$}{
                    $X_{i+1} \gets X_i$, \quad $Y_{i+1} \gets Y_i$, \quad $Z_{i+1} \gets F_i$\;
                    $\delta_{i+1} \gets f_7(c_i, X_i, F_i)$, \quad $\Delta_{i+1} \gets f_{12}(c_i, X_i, F_i)$ \tcp*[r]{\linkdest{5.1.1}Case~5.1.1~~}
                }
                \Else{
                    $X_{i+1} \gets \ast$, \quad $Y_{i+1} \gets Y_i$, \quad $Z_{i+1} \gets F_i$\;
                    \uIf{$F^*\setminus S_0 \nsubseteq F_i\setminus S_0$ {\bf and} $d_i + f_9(c_i, D_i, F_i) \le \uin - \lout$}{
                        $\delta_{i+1} \gets f_9(c_i, D_i, F_i)$, \quad $\Delta_{i+1} \gets \lout - D_i$ \tcp*[r]{\linkdest{5.1.2.1}Case~5.1.2.1}
                    }
                    \Else{
                        \Return{\tt Infeasible} \tcp*[r]{\linkdest{5.1.2.2}Case~5.1.2.2}
                    }
                }
            }
            \Else{
                $X_{i+1} \gets \ast$, \quad $Y_{i+1} \gets Y_i$, \quad $Z_{i+1} \gets F_i$\;
                \uIf{$D_i + f_{10}(c_i, d_i, D_i, F_i) \ge \lout$}{
                    $\delta_{i+1} \gets f_{11}(c_i, d_i, D_i, F_i)$, \quad $\Delta_{i+1} \gets f_{10}(c_i, d_i, D_i, F_i)$ \MyComment{\linkdest{5.2.1}Case~5.2.1~~}
                }
                \Else{
                    \uIf{$F^*\setminus S_0 \nsubseteq F_i\setminus S_0$ {\bf and} $d_i + f_9(c_i, D_i, F_i) \le \uin - \lout$}{
                        $\delta_{i+1} \gets f_9(c_i, D_i, F_i)$, \quad $\Delta_{i+1} \gets \lout - D_i$ \tcp*[r]{\linkdest{5.2.2.1}Case~5.2.2.1}
                    }
                    \Else{
                        \Return{\tt Infeasible} \tcp*[r]{\linkdest{5.2.2.2}Case~5.2.2.2}
                    }
                }
            }
        }
    }
    $d_{i+1} \gets d_i + \delta_{i+1}$, \quad $D_{i+1} \gets D_i + \Delta_{i+1}$\;
    $c_{i+1} \gets c - \p{d_{i+1}, D_{i+1}}{\ell, u}{w}$\;
    $F_{i+1} \gets \text{minimum $c_{i+1}$-cost member of $\mathcal{F}$ determined by $\mathcal{O}$}$\;
    $i \gets i+1$\;
}
\Return{$\p{d_i, D_i}{\ell, u}{w}$}\;
\end{algorithm}

Figure~\ref{fig:cases} in Appendix~\hyperref[sec:appendixb]{B} provides toy examples for the different cases occurring in Algorithm~\ref{algo:inv_min_cost_span}, while Table~\ref{table:summary} in Appendix~\hyperref[sec:appendixc]{C} gives a summary of the cases which might be helpful when reading the proof. For proving the correctness and the running time of the algorithm, we need the following lemmas. The proofs of these statements follow from the definitions in a fairly straightforward way, hence those are deferred to Appendix~\hyperref[sec:appendixa]{A}. 

Our first lemma shows that if $F^*$ is not optimal with respect to the current cost function, then in the next step it either has the same cost as the current optimal solution with respect to the modified cost function, or the problem is infeasible.

\begin{lemma} \label{lemma:inv_min_cost_span_correction}
If $F^*$ is not a minimum $c_i$-cost member of $\mathcal{F}$, then either
$c_{i+1}(F^*) = c_{i+1}(F_i)$ or Algorithm~\ref{algo:inv_min_cost_span} declares the problem to be infeasible.
\end{lemma}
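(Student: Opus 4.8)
The plan is to trace through the algorithm case by case and verify, in each of the branches where a deviation vector is actually set (Cases~1.1, 1.2.1, 2.1.1, 2.1.2.1, 2.2.1, 2.2.2.1, 3.1.1, 3.1.2.1, 3.2.1, 3.2.2.1, 4.1.1, 4.1.2.1, 4.2.1, 4.2.2.1, 5.1.1, 5.1.2.1, 5.2.1, 5.2.2.1), that the update rule $d_{i+1}\gets d_i+\delta_{i+1}$, $D_{i+1}\gets D_i+\Delta_{i+1}$, $c_{i+1}\gets c-\p{d_{i+1},D_{i+1}}{\ell,u}{w}$ produces a cost function for which $c_{i+1}(F^*)=c_{i+1}(F_i)$ holds. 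The common principle is that each $f_j$ was defined precisely as the value that equalizes the costs of $F^*$ and the relevant bad set (or sets) under the corresponding assumption about which coordinates land strictly inside the bounds and which are truncated to $\lin,\uin,\lout$ or $\uout$. So in each case I would (i)~identify, from the branch conditions, the exact form $\p{d_{i+1},D_{i+1}}{\ell,u}{w}$ takes on $F^*$ and on $S\setminus F^*$ — i.e.\ whether the elements of $F^*$ get $(d_{i+1}+D_{i+1})/w(s)$ or are capped at $\uin/w(s)$ or $\lin/w(s)$, and similarly for $S\setminus F^*$ with $D_{i+1}/w(s)$, $\uout/w(s)$, $\lout/w(s)$ — and (ii)~substitute into $c_{i+1}(F^*)-c_{i+1}(F_i)$ and check it equals zero by the defining identity of the $f_j$ used.

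Concretely, I would organize the verification by the "shape" of the deviation vector after the update rather than by the case number. The simplest situation is when no bound is hit (Cases~x.1.1): here $\p{d_{i+1},D_{i+1}}{\ell,u}{w}(s)=(d_{i+1}+D_{i+1})/w(s)$ on $F^*$ and $D_{i+1}/w(s)$ on $S\setminus F^*$, so $c_{i+1}(F^*)-c_{i+1}(F_i) = \big(c_i(F^*)-c_i(F_i)\big) - \delta_{i+1}\cdot\mu(F^*\setminus F_i) + (\delta_{i+1}+\Delta_{i+1})\cdot\ldots$; plugging in $f_1$ (Case~1.1, where $\Delta_{i+1}=0$ and $\mu(F_i)=\mu(F^*)$), or $f_3$ (Cases~2.1.1,~4.1.1, where $\delta_{i+1}=0$), or the pair $f_7,f_8$ / $f_7,f_{12}$ (Cases~3.1.1,~5.1.1, designed to equalize both $F_i$ and the stored opposite-sized bad set simultaneously) gives $0$ after a short computation. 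The cases where one family of elements is capped (e.g.\ $F^*$ capped at $\uin$ in Cases~1.2.1, 2.1.2.1, 2.2.2.1, 3.1.2.1, 3.2.2.1, or $S\setminus F^*$ capped at $\uout$ in Cases~2.2.1, 3.2.1, or $F^*$ capped at $\lin$ in Cases~4.2.1, 5.2.1, or $S\setminus F^*$ capped at $\lout$ in Cases~4.1.2.1, 4.2.2.1, 5.1.2.1, 5.2.2.1) each correspond to one of $f_2,f_4,f_5,f_6,f_9,f_{10},f_{11}$, whose numerators carry exactly the correction term $\uin\cdot(\ldots)$ or $\lout\cdot(\ldots)$ etc.\ needed to cancel the truncated contribution. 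I would present one representative computation in full (say Case~2.2.1 with $f_6$, which involves $\mu(F^*)-\mu(F_i)$ in the denominator) and indicate that the others are analogous.

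The remaining content of the lemma is the "or the problem is infeasible" clause, which is almost immediate: in every case, the branch that sets a deviation vector is guarded by inequalities of the form $D_i+f_j(\cdots)\ge\lout$ or $d_i+D_i+f_j(\cdots)\le\uin$ (and, for the nested $f_4,f_5,f_9$ branches, also a side condition like $F_i\setminus S_0\nsubseteq F^*\setminus S_0$ ensuring the relevant denominator $\mu(F_i\setminus F^*)$ or $\mu(F^*\setminus F_i)$ is nonzero so that $f_j$ is well-defined), and the complementary branch returns \texttt{Infeasible}. Hence whenever the while-loop body executes and does not return \texttt{Infeasible}, exactly one of the cost-setting branches is taken, and by the computation above $c_{i+1}(F^*)=c_{i+1}(F_i)$. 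So the lemma follows by exhaustion of the branching structure.

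The main obstacle I anticipate is purely bookkeeping: the nested $f_4/f_5$ and $f_9$ and $f_7/f_8/f_{12}$ branches are reached only after a chain of failed bound-tests, and one has to be careful that the deviation vector actually takes the capped form asserted — for instance in Case~2.1.2.1 one needs that $F^*$ is capped at $\uin$ (because the test $d_i+D_i+f_3\le\uin$ failed, and $f_4,f_5$ are built on the premise $d_{i+1}+D_{i+1}=\uin$) while $S\setminus F^*$ is \emph{not} capped (guaranteed by $D_i+f_4\ge\lout$ together with the earlier $D_i+f_3\le\uout$, since $\lout\le\uout$), and one must confirm this is consistent with \hyperlink{spec-lu}{(SPEC-LU)} so that no element of $F^*$ simultaneously exceeds $\uin$ while another is below $\lin$. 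Checking that these consistency conditions propagate correctly through all the nested branches — rather than any single algebraic identity — is where the real care is needed, and it is exactly the kind of routine-but-lengthy verification that the excerpt defers to Appendix~\hyperref[sec:appendixa]{A}.
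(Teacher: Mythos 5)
Your plan is essentially the paper's own proof: it establishes the incremental identity
$c_{i+1}(F^*)-c_{i+1}(F_i)=c_i(F^*)-c_i(F_i)-\delta_{i+1}\cdot\mu(F^*\setminus F_i)-\Delta_{i+1}\cdot\big(\mu(F^*)-\mu(F_i)\big)$
and then checks, case by case, that the chosen $\delta_{i+1},\Delta_{i+1}$ (defined via $f_1,\dots,f_{12}$) make the right-hand side vanish, with the \texttt{Infeasible} clause handled by the branching structure exactly as you say. One small caveat worth flagging explicitly: this incremental identity is only valid because $\lin\le d_j+D_j\le\uin$ and $\lout\le D_j\le\uout$ hold for both $j=i$ and $j=i+1$ (otherwise $\p{d_j,D_j}{\ell,u}{w}$ would genuinely truncate and the formula would change), which is the content of Lemma~\ref{lemma:inv_min_cost_span_deltas_stay_in_the_intervals}; your "shape/capped" framing is slightly misleading, since by construction the algorithm pushes $d_{i+1}+D_{i+1}$ or $D_{i+1}$ \emph{exactly onto} the bound rather than past it, so the deviation vector is never actually truncated — but the computations you outline give the same result, so the argument is correct and takes the same route as the paper.
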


The following lemmas together imply an upper bound on the total number of iterations.

\begin{lemma} \label{lemma:step1}
Let $i_1<i_2$ be indices such that both steps $i_1$ and $i_2$ corresponds to Cases~\hyperlink{1.1}{1.1} or~\hyperlink{1.2.1}{1.2.1}. Then $\mu\left( Y_{i_1+1} \cap F^* \right) < \mu\left( Y_{i_2+1} \cap F^* \right)$.
\end{lemma}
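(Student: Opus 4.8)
The plan is to rephrase the statement as a comparison of the accumulated span parameters $d_{i_1}$, $d_{i_1+1}$, $d_{i_2}$, obtained by reading off three cost inequalities from the events at steps $i_1$ and $i_2$. In Cases~\hyperlink{1.1}{1.1} and~\hyperlink{1.2.1}{1.2.1} the algorithm puts $Y_{i+1}\gets F_i$, so $Y_{i_1+1}=F_{i_1}$ and $Y_{i_2+1}=F_{i_2}$, and both of these sets satisfy $\mu(F_{i_1})=\mu(F_{i_2})=\mu(F^*)$ because that is the condition guarding the Case~1 branch; set $M\coloneqq\mu(F^*)$. For any $F$ with $\mu(F)=M$ we have $\mu(F\setminus F^*)=M-\mu(F\cap F^*)=\mu(F^*\setminus F)$, so writing $p\coloneqq\mu(F^*\setminus F_{i_1})$ and $q\coloneqq\mu(F^*\setminus F_{i_2})$, the target inequality $\mu(Y_{i_1+1}\cap F^*)<\mu(Y_{i_2+1}\cap F^*)$ is exactly $p>q$. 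Here $q\ge0$, and $p>0$: otherwise $f_1(c_{i_1},F_{i_1})=\bigl(c_{i_1}(F^*)-c_{i_1}(F_{i_1})\bigr)/\mu(F^*\setminus F_{i_1})$ would be undefined, whereas at step~$i_1$ the algorithm computes this value (its numerator being positive, since $F_{i_1}$ is a bad set) and then goes on to step~$i_2$.

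Next I would set up a cost formula. By \hyperlink{spec-lu}{(SPEC-LU)}, whenever $\lin\le d_j+D_j\le\uin$ and $\lout\le D_j\le\uout$ hold --- an invariant maintained by the algorithm, as every branch either leaves the pair $(d+D,\,D)$ inside these ranges or moves it to an endpoint --- the truncations in $\p{d_j,D_j}{\ell,u}{w}$ affect only the elements of $S_0$, on which the vector is $0$; hence
\[
c_j(F)=c(F)-(d_j+D_j)\,\mu(F\cap F^*)-D_j\,\mu(F\setminus F^*)\qquad\text{for every }F\in\cF .
\]
For a set $F$ with $\mu(F)=M$ this collapses to $c_j(F^*)-c_j(F)=c(F^*)-c(F)-d_j\,\mu(F^*\setminus F)$, i.e.\ the cost gap between $F^*$ and such a set is affine in the single parameter $d_j$; this is what makes the $d_i$ (rather than the pairs $(d_i,D_i)$) the right quantity to track.

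With this formula in hand, the three ingredients are: (i) $F_{i_1}$ is a minimum $c_{i_1}$-cost member of $\cF$, so $c_{i_1}(F_{i_1})\le c_{i_1}(F_{i_2})$, which --- both sets having $\mu=M$ --- rearranges to $c(F_{i_1})-c(F_{i_2})\le d_{i_1}(q-p)$; (ii) step~$i_1$ is Case~\hyperlink{1.1}{1.1} or~\hyperlink{1.2.1}{1.2.1}, so the algorithm does not declare infeasibility there and Lemma~\ref{lemma:inv_min_cost_span_correction} gives $c_{i_1+1}(F^*)=c_{i_1+1}(F_{i_1})$, i.e.\ $c(F_{i_1})-c(F^*)=-d_{i_1+1}\,p$; (iii) the while-condition holds at iteration~$i_2$, i.e.\ $c_{i_2}(F^*)>c_{i_2}(F_{i_2})$, i.e.\ $c(F_{i_2})-c(F^*)<-d_{i_2}\,q$. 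Subtracting (iii) from (ii) and then plugging in (i) yields
\[
d_{i_1}(q-p)\ \ge\ c(F_{i_1})-c(F_{i_2})\ >\ d_{i_2}\,q-d_{i_1+1}\,p ,
\]
which rearranges to $p\,(d_{i_1+1}-d_{i_1})>q\,(d_{i_2}-d_{i_1})$.

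To finish, note $d_{i_1+1}-d_{i_1}=\delta_{i_1+1}=f_1(c_{i_1},F_{i_1})>0$ (positive numerator, denominator $p>0$). Using that $(d_i)$ is non-decreasing --- equivalently that $\delta_{i+1}\ge0$ in every case, which in the bound-hitting cases follows from the fact that clamping the change on $F^*$ or on $S\setminus F^*$ to an endpoint while still eliminating a bad set can only enlarge the span --- one gets $d_{i_2}\ge d_{i_1+1}$, hence $d_{i_2}-d_{i_1}\ge d_{i_1+1}-d_{i_1}>0$; combined with $q\ge0$ this gives $q\,(d_{i_2}-d_{i_1})\ge q\,(d_{i_1+1}-d_{i_1})$. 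Substituting into the displayed inequality and dividing by the positive number $d_{i_1+1}-d_{i_1}$ yields $p>q$, which is the claim. The main obstacle is not this chain of inequalities but the two auxiliary facts it rests on --- the in-bounds invariant (so that the linear cost formula is valid) and the monotonicity $\delta_{i+1}\ge0$ --- each of which requires a somewhat tedious walk through the twelve functions $f_1,\dots,f_{12}$ and the corresponding branches; these are presumably exactly what the paper relegates to its appendix, the rest being bookkeeping with $\mu$.
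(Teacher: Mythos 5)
Your proof is correct and uses exactly the same ingredients as the paper's: Lemma~\ref{lemma:inv_min_cost_span_correction} at step $i_1$ (your~(ii)), the while condition at step $i_2$ (your~(iii)), minimality of $F_{i_1}$ (your~(i)), nonnegativity of the $\delta_j$'s (Claim~\ref{claim:inv_min_cost_span_nonneg_of_delta}), and the fact that all three sets $F^*, F_{i_1}, F_{i_2}$ have equal $\mu$-value so the $D$-terms cancel. The only difference is cosmetic — you make the affine cost formula in $d_j$ explicit and derive $p>q$ directly, whereas the paper phrases the same chain of inequalities as a contradiction argument working with the telescoping sums $\delta_{i_1+1}+\cdots+\delta_{i_2}$.
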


\begin{lemma} \label{lemma:step2}
Let $(a_1,a_2,a_3,a_4)$ be a 4-tuple of integers satisfying $0\leq a_k\leq \|w\|_{\text{-}1}$ for any $k\in[4]$.
\begin{enumerate}[label=(\alph*)]\itemsep0em
\item \label{lobab:a} There is at most one index $i$ such that $F^*$ is not a minimum $c_i$-cost member of $\cF$, $\mu(F_i) < \mu(F^*)$, $Z_i \ne \ast$ and $\big (\mu(F_i),\mu(Z_i),\mu(F_i\cap F^*),\mu(Z_i \cap F^*)\big)=(a_1,a_2,a_3,a_4)$.
\item \label{lobab:b} There is at most one index $i$ such that $F^*$ is not a minimum $c_i$-cost member of $\cF$, $\mu(F_i) > \mu(F^*)$, $X_i \ne \ast$ and $\big( \mu(X_i),\mu(F_i),\mu(X_i\cap F^*),\mu(F_i \cap F^*)\big)=(a_1,a_2,a_3,a_4)$.
\end{enumerate}
\end{lemma}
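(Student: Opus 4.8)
I would prove both parts by the same argument and write out \ref{lobab:a} in detail; part \ref{lobab:b} then follows by interchanging the roles of small and large bad sets, of the witnesses $X$ and $Z$, and of the bound pairs $(\lin,\uin)$ and $(\lout,\uout)$. So suppose, towards a contradiction, that $i<j$ are two indices both satisfying the hypotheses of \ref{lobab:a} with the same tuple $(a_1,a_2,a_3,a_4)$. Since the \texttt{Infeasible}-branches terminate the algorithm, every iteration from $i$ through $j$ executes one of the productive subcases.

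The first and main step is to turn ``same profile'' into ``same behaviour of a fixed set''. For every $F\in\mathcal{F}$ and every iteration $k$,
\[ c_k(F^*)-c_k(F)=\bigl(c(F^*)-c(F)\bigr)-a_k\,\mu(F^*\setminus F)+b_k\,\mu(F\setminus F^*), \]
where $a_k$ and $b_k$ denote the common values of $w(s)\cdot\p{d_k,D_k}{\ell,u}{w}(s)$ over $s\in F^*\setminus S_0$ and over $s\in (S\setminus F^*)\setminus S_0$, respectively --- these are well defined because the bounds in \hyperlink{spec-lu}{\normalfont(SPEC-LU)} scale with $1/w$, and, crucially, they do not depend on $F$. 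Hence $c_k(F^*)-c_k(F)$ depends on $F$ only through $c(F)$, $\mu(F^*\setminus F)$ and $\mu(F\setminus F^*)$. The common tuple gives $\mu(F^*\setminus F_i)=\mu(F^*)-a_3=\mu(F^*\setminus F_j)$ and $\mu(F_i\setminus F^*)=a_1-a_3=\mu(F_j\setminus F^*)$, so applying minimality of $F_i$ for $c_i$ to the competitor $F_j$, and of $F_j$ for $c_j$ to the competitor $F_i$, forces $c(F_i)\le c(F_j)$ and $c(F_j)\le c(F_i)$, whence $c(F_i)=c(F_j)$ and therefore $c_k(F^*)-c_k(F_i)=c_k(F^*)-c_k(F_j)$ for every $k$. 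The analogous argument, run at the iterations where $Z_i$ and $Z_j$ were most recently installed as the large witness (at such an iteration the installed set is a minimum-cost member of $\mathcal{F}$) together with $\mu(Z_i)=\mu(Z_j)$ and $\mu(Z_i\cap F^*)=\mu(Z_j\cap F^*)$, yields $c(Z_i)=c(Z_j)$ and hence $c_k(F^*)-c_k(Z_i)=c_k(F^*)-c_k(Z_j)$ for every $k$.

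Next I would collect tightness facts. Because $Z$ is carried unchanged with a non-$\ast$ value across an iteration only in Case~\hyperlink{3.1.1}{3.1.1}, the iterations strictly between the installation of $Z_i$ (resp.\ $Z_j$) and $i$ (resp.\ $j$) are all of type~\hyperlink{3.1.1}{3.1.1}, so the large witness stays tight along these stretches; combined with Lemma~\ref{lemma:inv_min_cost_span_correction} this gives $c_i(F^*)-c_i(Z_i)=0$, $c_j(F^*)-c_j(Z_j)=0$ and $c_{i+1}(F^*)-c_{i+1}(F_i)=0$, while at iteration $j$ the while-condition holds and $F_j$ is a minimum $c_j$-cost member, so $c_j(F^*)-c_j(F_j)>0$. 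Using the previous paragraph to transfer all of this to the fixed sets $F_i$ and $Z_i$, writing the relations out with the displayed identity, and using that the algorithm keeps $d_k+D_k$ and $D_k$ within their bounds (so $a_k=d_k+D_k$ and $b_k=D_k$), I obtain: from ``$Z_i$ tight at $i$ and at $j$'', a linear equation forcing $D_j-D_i=\tfrac{(d_j-d_i)\,\mu(F^*\setminus Z_i)}{\mu(Z_i)-\mu(F^*)}$; from ``$Z_i$ tight at $i$ and at $i+1$'' (valid when iteration $i$ itself is of type~\hyperlink{3.1.1}{3.1.1}), the same with $j$ replaced by $i+1$; and from ``$F_i$ tight at $i+1$ but strictly bad at $j$'', a strict inequality. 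Since $d_k$ is nondecreasing in $k$ and strictly increases across iteration~$i$, we have $d_i<d_{i+1}\le d_j$, and the three relations together give $D_{i+1}>D_j$ (from the $F_i$-relation) and $D_{i+1}\le D_j$ (from the two $Z_i$-relations; the degenerate case $\mu(F^*\setminus Z_i)=0$ gives $D_i=D_j=D_{i+1}$ instead, equally contradicting $D_{i+1}>D_j$) --- a contradiction.

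The part I expect to be genuinely technical is the situation in which iteration $i$ is one of the bound-hitting subcases of Case~3 (\hyperlink{3.1.2.1}{3.1.2.1}, \hyperlink{3.2.1}{3.2.1} or \hyperlink{3.2.2.1}{3.2.2.1}): there $Z_i$ need not stay tight at $i+1$, so the ``$Z_i$ tight at $i$ and $i+1$'' relation is lost. The substitute is to use that such a subcase sets $d_{i+1}+D_{i+1}$ or $D_{i+1}$ exactly to a bound; combined with the invariant that $d_k+D_k$ and $D_k$ never leave the bounds, this pins down $a_{i+1}$ or $b_{i+1}$ and makes the sign of $c_k(F^*)-c_k(F_i)$ controllable in the remaining free parameter, which is enough to contradict $c_j(F^*)-c_j(F_i)>0$. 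Checking the handful of sub-subcases here --- and verifying along the way the signs of the relevant update functions $f_4,f_5,f_6,f_7,f_8$ under the hypothesis that the carried witnesses are tight --- is where most of the bookkeeping sits.
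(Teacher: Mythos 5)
Your argument is correct in outline and correct in detail \emph{when iteration $i$ falls into Case~\hyperlink{3.1.1}{3.1.1}}, but it is genuinely incomplete in the bound-hitting subcases, and the fix you sketch does not close the gap. The cleanest way to see the gap is to trace the three relations you rely on. From ``$Z_i$ tight at $i$ and $j$'' you get $D_j-D_i=\tfrac{(d_j-d_i)\,\mu(F^*\setminus Z_i)}{\mu(Z_i)-\mu(F^*)}$; from the $F_i$-relation you get $D_j<D_{i+1}$; and to contradict these you need $D_{i+1}-D_i\le\tfrac{(d_{i+1}-d_i)\,\mu(F^*\setminus Z_i)}{\mu(Z_i)-\mu(F^*)}$. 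In Case~\hyperlink{3.1.1}{3.1.1} this last bound is an equality, coming from ``$Z_i$ tight at $i+1$''. In Cases~\hyperlink{3.1.2.1}{3.1.2.1}, \hyperlink{3.2.1}{3.2.1}, \hyperlink{3.2.2.1}{3.2.2.1}, the witness is dropped ($Z_{i+1}=\ast$) and the bound must come from elsewhere. Your substitute --- pin $d_{i+1}+D_{i+1}=\uin$ or $D_{i+1}=\uout$ and invoke the invariant --- only yields $D_j\le D_{i+1}$, which is the \emph{wrong} direction: it is consistent with $D_j<D_{i+1}$ and produces no contradiction. What is actually needed is a quantitative bound on the increment $\Delta_{i+1}$ relative to $\delta_{i+1}$, namely $\Delta_{i+1}\le f_8(c_i,F_i,Z_i)=\tfrac{f_7(c_i,F_i,Z_i)\,\mu(F^*\setminus Z_i)}{\mu(Z_i)-\mu(F^*)}$ (which follows from the branching conditions in each subcase together with Claim~\ref{claim:inv_min_cost_span_reformulating_Delta_iplusone}), combined with $f_7(c_i,F_i,Z_i)\le\delta_{i+1}$ --- and this last inequality is precisely Claim~\ref{claim:inv_min_cost_span_forgetting}, a nontrivial fact per subcase rather than routine sign-bookkeeping.

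So your route and the paper's route diverge after the first step, which you both do the same way (your direct minimality argument is essentially the paper's Claim~\ref{claim:inv_min_cost_span_notsamesize}; both conclude $c(F_i)=c(F_j)$, $c(Z_i)=c(Z_j)$, hence $c_k(F_i)=c_k(F_j)$ and $c_k(Z_i)=c_k(Z_j)$ for all $k$). After that the paper does \emph{not} try to keep linear control of $D$-increments across the stretch $[i,j]$; it instead lower-bounds $d_j$ by $\delta_{i+1}\ge f_7(c_i,F_i,Z_i)$ (Claim~\ref{claim:inv_min_cost_span_forgetting}, which is agnostic to which Case-3 subcase fired) and then applies Claim~\ref{claim:inv_min_cost_span_pairs_corrected_once} to the pair $(F_j,Z_j)$, using $c_j(F^*)=c_j(Z_j)$ (Claim~\ref{claim:inv_min_cost_span_correction_for_a_pair_v2}) to rule out $F_j$ still being strictly cheaper than $F^*$. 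That single ``forgetting'' inequality is precisely the lemma your approach is missing, and it is where the work you deferred actually sits. Without it, or an equivalent uniform bound on $\Delta_{i+1}$ in the bound-hitting cases, your argument does not go through.

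One smaller point: in your $Z$-step you transfer tightness from $Z_j$ to $Z_i$ using minimality at the install iterations, which requires that $Z_i$ and $Z_j$ were each installed as an optimal $F_k$ at some earlier step $k$. That is fine in the algorithm, but worth stating; it also implicitly uses that $Z$ is carried (not reinstalled) only in Case~\hyperlink{3.1.1}{3.1.1}, which you noted.
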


\begin{lemma} \label{lemma:step3}
\mbox{}
\begin{enumerate}[label=(\alph*)]\itemsep0em
\item \label{notsame_new:a} If $\mu(F_i), \mu(F_{i+1})<\mu(F^*)$ and $Z_{i}, Z_{i+1} = \ast$, then at least one of $\mu(F_i)\neq\mu(F_{i+1})$ and $\mu\left( F_i \cap F^* \right) < \mu\left( F_{i+1} \cap F^* \right)$ holds.
\item \label{notsame_new:b} If $\mu(F_i), \mu(F_{i+1})>\mu(F^*)$ and $X_{i}, X_{i+1} = \ast$, then at least one of $\mu(F_i)\neq\mu(F_{i+1})$ and $\mu\left( F_i \cap F^* \right) < \mu\left( F_{i+1} \cap F^* \right)$ holds.
\end{enumerate}
\end{lemma}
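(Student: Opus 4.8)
The plan is to prove both parts by the same argument, so I focus on~\ref{notsame_new:a}; part~\ref{notsame_new:b} follows by the symmetric reasoning with the roles of $F^*$ and the complement reversed (small $\leftrightarrow$ large, $\lout\leftrightarrow\lin$, $\uout\leftrightarrow\uin$, $f_3,f_4,f_5,f_6\leftrightarrow f_3,f_{10},f_{11},f_9$). Assume $\mu(F_i),\mu(F_{i+1})<\mu(F^*)$ with $Z_i=Z_{i+1}=\ast$. The hypothesis $Z_{i+1}=\ast$ together with $\mu(F_i)<\mu(F^*)$ means that step $i$ processed a small bad set through the branch "$\mu(F_i)<\mu(F^*)$, $Z_i=\ast$'', i.e.\ one of Cases~\hyperlink{2.1.1}{2.1.1}, \hyperlink{2.1.2.1}{2.1.2.1}, or~\hyperlink{2.2.1}{2.2.1} (the Infeasible cases cannot occur since the algorithm continues to step $i+1$). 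In each of these the update sets $\delta_{i+1}$ and $\Delta_{i+1}$ so that $F_i$ is no longer bad after the update: concretely, the defining property of the relevant $f$-function gives $(c_{i+1}-0)(F^*)=(c_{i+1})(F_i)$ when the unbounded change $f_3$ is applied (Case~2.1.1), and $c_{i+1}(F^*)=c_{i+1}(F_i)$ likewise in Cases~2.1.2.1 and~2.2.1 by the definitions of $f_4,f_5$ and of $f_6$ (these are precisely engineered so that the cost gap on $F_i$ is closed). By Lemma~\ref{lemma:inv_min_cost_span_correction} this equality $c_{i+1}(F^*)=c_{i+1}(F_i)$ holds in all non-infeasible branches.

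The heart of the argument is then: since $F_{i+1}$ is a minimum $c_{i+1}$-cost member of $\cF$ and $F^*$ is still bad at step $i+1$ (the while-condition holds), we have $c_{i+1}(F_{i+1})\le c_{i+1}(F_i)$. Suppose for contradiction that $\mu(F_i)=\mu(F_{i+1})$ and $\mu(F_i\cap F^*)\ge\mu(F_{i+1}\cap F^*)$. I will compare $c_{i+1}(F_i)$ and $c_{i+1}(F_{i+1})$ by writing $c_{i+1}=c-\p{d_{i+1},D_{i+1}}{\ell,u}{w}$ and expanding the deviation vector over the four regions $S_0$, $F^*\setminus S_0$ outside the bounds, etc. Using \hyperlink{spec-lu}{(SPEC-LU)} and that $d_{i+1}+D_{i+1}$ and $D_{i+1}$ lie in the ranges produced by the algorithm, the value $\p{d_{i+1},D_{i+1}}{\ell,u}{w}$ restricted to elements outside $S_0$ equals $(d_{i+1}+D_{i+1})/w(s)$ on $F^*\setminus S_0$ and $D_{i+1}/w(s)$ on $(S\setminus F^*)\setminus S_0$ — this is exactly the content of Step~3 and the case analysis guaranteeing the bounds are not re-violated. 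Hence
\[
\p{d_{i+1},D_{i+1}}{\ell,u}{w}(F) \;=\; (d_{i+1}+D_{i+1})\cdot\mu(F\cap F^*) \;+\; D_{i+1}\cdot\bigl(\mu(F)-\mu(F\cap F^*)\bigr)
\]
for any $F\in\cF$, so $\p{d_{i+1},D_{i+1}}{\ell,u}{w}(F)=d_{i+1}\cdot\mu(F\cap F^*)+D_{i+1}\cdot\mu(F)$. Therefore
\[
c_{i+1}(F_{i+1})-c_{i+1}(F_i)
= \bigl(c(F_{i+1})-c(F_i)\bigr) - d_{i+1}\bigl(\mu(F_{i+1}\cap F^*)-\mu(F_i\cap F^*)\bigr) - D_{i+1}\bigl(\mu(F_{i+1})-\mu(F_i)\bigr).
\]
Under the contradiction hypothesis the last term vanishes, and since the change applied in step $i$ had $\delta_{i+1}\ge 0$ (all of $f_3,f_5,f_6$ return non-negative values on a bad small set, because $c_i(F^*)>c_i(F_i)$ and $\mu(F^*)-\mu(F_i)>0$), we get $d_{i+1}=d_i+\delta_{i+1}\ge d_i\ge\cdots\ge d_0=\delta_0\ge 0$; combined with $\mu(F_{i+1}\cap F^*)\le\mu(F_i\cap F^*)$ this makes the middle term $\ge 0$. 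I will then show the remaining difference $c(F_{i+1})-c(F_i)$ cannot be negative enough to violate $c_{i+1}(F_{i+1})\le c_{i+1}(F_i)$ unless in fact $c_{i+1}(F_{i+1})=c_{i+1}(F_i)=c_{i+1}(F^*)$, i.e.\ $F_i$ is still bad at step $i+1$ with equality — but then $F_i$ itself would have been a valid choice for $F_{i+1}$, and a tie-breaking/minimality argument together with $\mu(F_i)=\mu(F_{i+1})$, $\mu(F_i\cap F^*)\ge\mu(F_{i+1}\cap F^*)$ forces the strict inequality $\mu(F_i\cap F^*)<\mu(F_{i+1}\cap F^*)$ we wanted, a contradiction.

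The main obstacle I anticipate is the bookkeeping showing that $\p{d_{i+1},D_{i+1}}{\ell,u}{w}$ genuinely takes the clean form $d_{i+1}\mu(\cdot\cap F^*)+D_{i+1}\mu(\cdot)$ on all sets simultaneously — this requires carefully tracking, across the entire case analysis of the algorithm, that the running parameters $d_i+D_i$ and $D_i$ never leave the regime $[\lin,\uin]$-side and $[\lout,\uout]$-side respectively where the truncated definition of $\p{\cdot}{\ell,u}{w}$ collapses to the linear formula; equivalently, that once Step~3 has fixed the interval structure, the "Case~$\cdot$.1.1''-type branches keep us in the interior and the extreme branches set us exactly onto a boundary value that is still consistent with the formula. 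The non-negativity of $\delta_{i+1}$ (so that $d_i$ is monotone nondecreasing) is the other point needing care, but it reduces to checking the sign of each $f$-function on a bad small set, which follows directly from the while-loop condition $c_i(F^*)>c_i(F_i)$ and $\mu(F^*)>\mu(F_i)$. Everything else is the linear-algebra manipulation sketched above.
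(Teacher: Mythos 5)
Your overall strategy -- expand $c_{i+1}$ linearly over $\mu(\cdot\cap F^*)$ and $\mu(\cdot)$, use $\delta_{i+1}\ge 0$, and compare the $c_{i+1}$-costs of $F_i$ and $F_{i+1}$ -- is the same skeleton the paper uses, but your execution is far more circuitous than necessary, and the concluding step is a genuine gap. The paper's argument is a short chain: suppose $\mu(F_i)=\mu(F_{i+1})$ and $\mu(F_i\cap F^*)\ge\mu(F_{i+1}\cap F^*)$; then by the linear form of $\p{d,D}{\ell,u}{w}$ (already guaranteed by Lemma~\ref{lemma:inv_min_cost_span_deltas_stay_in_the_intervals}, so no case-by-case re-verification of the bounds is needed),
\[
c_{i+1}(F_i)-c_{i+1}(F_{i+1})
= c_i(F_i)-c_i(F_{i+1})
 -\delta_{i+1}\bigl(\mu(F_i\cap F^*)-\mu(F_{i+1}\cap F^*)\bigr)
 -\Delta_{i+1}\cdot 0 ,
\]
and the right-hand side is $\le 0$ since $F_i$ is $c_i$-optimal and $\delta_{i+1}\ge 0$ (Claim~\ref{claim:inv_min_cost_span_nonneg_of_delta}); this contradicts $F_{i+1}$ being $c_{i+1}$-optimal together with the strict inequality $c_{i+1}(F_i)>c_{i+1}(F_{i+1})$.

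Where your proof actually breaks down is the last paragraph. You reduce to the situation $c_{i+1}(F_{i+1})=c_{i+1}(F_i)=c_{i+1}(F^*)$ and then wave at ``a tie-breaking/minimality argument [that] forces the strict inequality $\mu(F_i\cap F^*)<\mu(F_{i+1}\cap F^*)$.'' There is no such argument available: nothing in Algorithm~\ref{algo:inv_min_cost_span} constrains how the oracle breaks ties, and if the oracle were allowed to return $F_{i+1}=F_i$, your chain of inequalities would be all equalities and no contradiction would follow. The missing ingredient is precisely the strict inequality $c_{i+1}(F_i)>c_{i+1}(F_{i+1})$, which you already had the tools to prove: Lemma~\ref{lemma:inv_min_cost_span_correction} gives $c_{i+1}(F_i)=c_{i+1}(F^*)$, and the while-loop condition holding at step $i+1$ gives $c_{i+1}(F^*)>c_{i+1}(F_{i+1})$. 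That observation both rules out $F_{i+1}=F_i$ and delivers the contradiction; without it your proof does not close. Everything else in your write-up -- the case analysis over 2.1.1/2.1.2.1/2.2.1, the bookkeeping on $d_i,D_i$ ranges, the sign checks on $f_3,f_5,f_6$ -- is redundant, since Lemma~\ref{lemma:inv_min_cost_span_deltas_stay_in_the_intervals} and Claim~\ref{claim:inv_min_cost_span_nonneg_of_delta} already package those facts.
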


The essence of the next lemma is that for a feasible instance satisfying \hyperlink{spec-lu}{\normalfont (SPEC-LU)}, there exists an optimal deviation vector $\p{\delta, \Delta}{\ell, u}{w}$ where $\delta$ and $\Delta$ can be bounded.  

\begin{lemma} \label{lemma:inv_min_cost_span_opt_dev_vector_v2}
 Let $\big( S, \mathcal{F}, F^*, c, \ell, u, \spa_w( \cdot ) \big)$ be a feasible minimum-cost inverse optimization problem where the bound-constraints satisfy \hyperlink{spec-lu}{\normalfont (SPEC-LU)}. Then there exist $\delta, \Delta \in \mathbb{R}$ with $\lout \le \Delta \le \uout$ and $\lin \le \delta + \Delta \le \uin$ such that $\p{\delta, \Delta}{\ell, u}{w}$ is an optimal deviation vector.
\end{lemma}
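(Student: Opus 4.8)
The plan is to take the special-form optimal deviation vector provided by Corollary~\ref{cor:inv_min_cost_span_opt_dev_vector} and simply \emph{re-parametrize} it, adjusting $\delta$ and $\Delta$ so that they fall inside the prescribed box while leaving the underlying vector unchanged. By Corollary~\ref{cor:inv_min_cost_span_opt_dev_vector} there exist $\delta_0,\Delta_0\in\mathbb{R}$ for which $p\coloneqq\p{\delta_0,\Delta_0}{\ell,u}{w}$ is an optimal deviation vector. The first observation is that, under \hyperlink{spec-lu}{\normalfont(SPEC-LU)}, the defining case distinction of the $\p{\cdot,\cdot}{\ell,u}{w}$-operator is precisely a truncation: writing $\operatorname{med}$ for the middle of three reals (with the natural conventions when a bound is $\pm\infty$), we have $w(s)\,p(s)=0$ for every $s\in S_0$, $w(s)\,p(s)=\operatorname{med}\{\lin,\delta_0+\Delta_0,\uin\}$ for every $s\in F^*\setminus S_0$, and $w(s)\,p(s)=\operatorname{med}\{\lout,\Delta_0,\uout\}$ for every $s\in(S\setminus F^*)\setminus S_0$; in particular $p$ is constant in the weighted sense on each of these three groups.

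Next I would set
\[
\Delta\coloneqq\operatorname{med}\{\lout,\Delta_0,\uout\},\qquad
\delta\coloneqq\operatorname{med}\{\lin,\delta_0+\Delta_0,\uin\}-\Delta.
\]
Since \hyperlink{spec-lu}{\normalfont(SPEC-LU)} guarantees $\lout\le\uout$ and $\lin\le\uin$, each of these medians lies in the corresponding interval, so $\lout\le\Delta\le\uout$ and $\lin\le\delta+\Delta=\operatorname{med}\{\lin,\delta_0+\Delta_0,\uin\}\le\uin$, which is exactly the required range (the statement does not demand $\delta\ge0$). It then remains to verify $\p{\delta,\Delta}{\ell,u}{w}=p$, which I would do group by group: on $S_0$ both sides vanish because $\ell\equiv u\equiv0$ there; on $F^*\setminus S_0$ one has $w(s)\cdot\p{\delta,\Delta}{\ell,u}{w}(s)=\operatorname{med}\{\lin,\delta+\Delta,\uin\}=\operatorname{med}\{\lin,\delta_0+\Delta_0,\uin\}=w(s)\,p(s)$, where the middle equality is the idempotence of truncation applied to the value $\delta+\Delta$, which already lies in $[\lin,\uin]$; and the argument on $(S\setminus F^*)\setminus S_0$ is identical with $\Delta$ and $(\lout,\uout)$ in place of $\delta+\Delta$ and $(\lin,\uin)$. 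Hence $\p{\delta,\Delta}{\ell,u}{w}$ coincides with the optimal vector $p$ and is therefore itself an optimal deviation vector within the stated bounds.

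This argument is essentially bookkeeping, so there is no genuine obstacle; the points to be careful about are (i)~rewriting the six-case definition of $\p{\cdot,\cdot}{\ell,u}{w}(s)$ as $\tfrac{1}{w(s)}\operatorname{med}\{w(s)\ell(s),\,\delta+\Delta,\,w(s)u(s)\}$ on $F^*$ and as $\tfrac{1}{w(s)}\operatorname{med}\{w(s)\ell(s),\,\Delta,\,w(s)u(s)\}$ on $S\setminus F^*$, which hinges on $\ell\le u$; (ii)~the possibility that one or more of $S_0$, $F^*\setminus S_0$, $(S\setminus F^*)\setminus S_0$ is empty, in which case the corresponding requirement simply disappears and the re-parametrization still works; and (iii)~the bookkeeping for infinite bounds $\lin=-\infty$, $\uin=+\infty$ (and likewise $\lout,\uout$), where $\operatorname{med}$ degenerates to a one-sided truncation and all the identities above remain valid verbatim.
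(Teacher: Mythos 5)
Your proposal is correct and takes essentially the same approach as the paper: both start from the special-form optimum given by Corollary~\ref{cor:inv_min_cost_span_opt_dev_vector} and re-parametrize $(\delta,\Delta)$ without changing the underlying vector, using the fact that under \hyperlink{spec-lu}{(SPEC-LU)} the operator $\p{\cdot,\cdot}{\ell,u}{w}$ acts as a median/truncation on the two groups $F^*\setminus S_0$ and $(S\setminus F^*)\setminus S_0$ and vanishes on $S_0$. The paper carries this out via an explicit four-way case analysis, exploiting the one-sided bounds $\delta'+\Delta'\ge\lin$ and $\Delta'\le\uout$ already supplied by the corollary; your median formulation $\Delta\coloneqq\operatorname{med}\{\lout,\Delta_0,\uout\}$, $\delta+\Delta\coloneqq\operatorname{med}\{\lin,\delta_0+\Delta_0,\uin\}$ handles all cases at once, the idempotence of truncation then giving $\p{\delta,\Delta}{\ell,u}{w}=\p{\delta_0,\Delta_0}{\ell,u}{w}$. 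This is a cleaner packaging (and does not even need the corollary's one-sided inequalities), but it is the same underlying argument, not a different route.
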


Finally, we show that analogous bounds hold for the values $d_i$ and $D_i$ computed throughout the algorithm.

\begin{lemma} \label{lemma:inv_min_cost_span_deltas_stay_in_the_intervals}
Either $\lin \le d_i + D_i \le \uin$ and $\lout \le D_i \le \uout$, or Algorithm~\ref{algo:inv_min_cost_span} declares the problem to be infeasible. Moreover, in Cases~\hyperlink{2.1.2.1}{2.1.2.1}, \hyperlink{2.2.2.1}{2.2.2.1}, \hyperlink{3.1.2.1}{3.1.2.1}, and~\hyperlink{3.2.2.1}{3.2.2.1}, we have $d_{i+1} + D_{i+1} = \uin$, and in Cases~\hyperlink{4.2.1}{4.2.1} and~\hyperlink{5.2.1}{5.2.1}, we have $d_{i+1} + D_{i+1} = \lin$.
\end{lemma}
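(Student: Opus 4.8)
The plan is to argue by induction on $i$, showing that as long as the algorithm has not declared the problem infeasible, the invariants $\lin \le d_i + D_i \le \uin$ and $\lout \le D_i \le \uout$ are maintained. For the base case $i=0$, one checks directly from the initialization: $D_0 = \Delta_0$ is chosen to equal $\uout$, or $\lin$, or $0$ depending on which of $\uout, \lin$ are finite, and $\delta_0 = \max\{\lin - \uout, 0\}$, so $d_0 + D_0 = \delta_0 + \Delta_0$; in each of the three branches one verifies $\lout \le D_0 \le \uout$ and $\lin \le d_0 + D_0 \le \uin$ using the inequalities $\lin \le \uin$, $\lout \le \uout$ guaranteed by \hyperlink{spec-lu}{(SPEC-LU)} (e.g. when $\uout \ne +\infty$ we get $D_0 = \uout$ and $d_0 + D_0 = \max\{\lin, \uout\}$, which lies in $[\lin, \uin]$ since $\lin \le \uin$ and $\uout \le \uin$ — the latter from the (SPEC-LU) inequality $\max\{w(s)\mid s\in S\setminus F^*\}\cdot\uout \le \min\{w(s)\mid s\in F^*\}\cdot\uin$ together with positivity of $w$).

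For the inductive step, assume the invariants hold at step $i$ and that iteration $i$ does not return \texttt{Infeasible}. The update is $d_{i+1} = d_i + \delta_{i+1}$, $D_{i+1} = D_i + \Delta_{i+1}$, and one must inspect each of the surviving cases (that is, every case except the \texttt{Infeasible} ones). The key observation is that in each non-infeasible case, the explicit \texttt{if}-guard preceding the assignment of $\delta_{i+1}, \Delta_{i+1}$ is precisely the condition needed to keep $d_{i+1}+D_{i+1}$ and $D_{i+1}$ in range on the side being pushed, while the \emph{direction} of the move keeps the other side in range for free. Concretely: in Cases~\hyperlink{1.1}{1.1}, \hyperlink{2.1.1}{2.1.1}, \hyperlink{3.1.1}{3.1.1} the guard is exactly $d_i+D_i+(\text{increment on }F^*)\le\uin$ and $D_i+(\text{increment on }S\setminus F^*)\le\uout$, giving the upper bounds directly, and since these are "increase" steps ($f_1, f_3, f_7/f_8$ are nonnegative when $F^*$ is not optimal — here one uses $c_i(F^*) > c_i(F_i)$ and $\mu(F^*) \geq \mu(F_i)$ or the analogous sign analysis), the lower bounds are inherited from step $i$; symmetrically for Cases~\hyperlink{4.1.1}{4.1.1}, \hyperlink{5.1.1}{5.1.1} and the "decrease" steps. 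For the boundary cases~\hyperlink{2.2.1}{2.2.1}, \hyperlink{3.2.1}{3.2.1} one has $\Delta_{i+1} = \uout - D_i$, so $D_{i+1} = \uout$ exactly (hence in range), and the guard $d_i + f_6 \le \uin - \uout$ rearranges to $d_{i+1}+D_{i+1}\le\uin$; and for~\hyperlink{4.1.2.1}{4.1.2.1}, \hyperlink{4.2.2.1}{4.2.2.1}, \hyperlink{5.1.2.1}{5.1.2.1}, \hyperlink{5.2.2.1}{5.2.2.1} one has $\Delta_{i+1} = \lout - D_i$ so $D_{i+1}=\lout$ and the guard $d_i + f_9 \le \uin - \lout$ gives $d_{i+1}+D_{i+1}\le\uin$, while the complementary case-split guards (e.g. $F^*\setminus S_0\nsubseteq F_i\setminus S_0$) ensure the relevant denominators in $f_9$ are nonzero and the lower bound on $d_{i+1}+D_{i+1}$ follows from a short computation using $c_i(F^*)>c_i(F_i)$.

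For the "moreover" part, the cases~\hyperlink{2.1.2.1}{2.1.2.1}, \hyperlink{2.2.2.1}{2.2.2.1}, \hyperlink{3.1.2.1}{3.1.2.1}, \hyperlink{3.2.2.1}{3.2.2.1} are exactly the ones where we hit the upper bound on the $F^*$-elements: in each, $\Delta_{i+1} = f_4(c_i,d_i,D_i,F_i)$ and $\delta_{i+1} = f_5(c_i,d_i,D_i,F_i)$, and one computes directly from the definitions of $f_4$ and $f_5$ that $\delta_{i+1} + \Delta_{i+1} = \uin - d_i - D_i$, whence $d_{i+1}+D_{i+1} = \uin$; the identity $f_4 + f_5 = \uin - d_i - D_i$ is the routine algebraic fact that the two fractions share the denominator $\mu(F_i\setminus F^*)$ and their numerators sum, after using $\mu(F^*)-\mu(F_i) = \mu(F^*\setminus F_i) - \mu(F_i\setminus F^*)$, to $(\uin - d_i - D_i)\cdot\mu(F_i\setminus F^*)$. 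Symmetrically, in Cases~\hyperlink{4.2.1}{4.2.1} and~\hyperlink{5.2.1}{5.2.1} we have $\delta_{i+1}=f_{11}$, $\Delta_{i+1}=f_{10}$ and the analogous identity $f_{10}+f_{11} = \lin - d_i - D_i$ gives $d_{i+1}+D_{i+1}=\lin$. The main obstacle is simply bookkeeping: there are a dozen surviving cases and one must not mis-match a guard with the quantity it controls; the saving grace is that the algorithm was designed so that each guard is literally the inequality needed, so no genuinely new inequality has to be discovered — every step is a one-line rearrangement of the case condition together with the sign of the Newton increment, which in turn follows from $c_i(F^*) > c_i(F_i)$.
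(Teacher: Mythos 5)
Your overall strategy matches the paper's: induction on $i$, base case by the finiteness cases of $\lin$ and $\uout$, inductive step by case inspection of which branch the $i$th iteration takes, and the ``moreover'' identities $f_4+f_5=\uin-d_i-D_i$ and $f_{10}+f_{11}=\lin-d_i-D_i$ obtained by algebra on the shared denominator $\mu(F_i\setminus F^*)$ together with $\mu(F^*\setminus F_i)-\bigl(\mu(F^*)-\mu(F_i)\bigr)=\mu(F_i\setminus F^*)$. That part is all fine.

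The genuine gap is in your sign analysis for the ``pair'' cases. You assert that ``$f_1, f_3, f_7/f_8$ are nonnegative when $F^*$ is not optimal --- here one uses $c_i(F^*)>c_i(F_i)$ and $\mu(F^*)\ge\mu(F_i)$ or the analogous sign analysis,'' and later that ``every step is a one-line rearrangement of the case condition together with the sign of the Newton increment, which in turn follows from $c_i(F^*)>c_i(F_i)$.'' For Cases~\hyperlink{3.1.1}{3.1.1}, \hyperlink{3.2.1}{3.2.1}, \hyperlink{5.1.1}{5.1.1}, etc.\ this is not true. Writing
\begin{equation*}
f_8(c_i,F_i,Z_i)=\frac{c_i(F^*)-c_i(F_i)}{\mu(F^*)-\mu(F_i)}-f_7(c_i,F_i,Z_i)\cdot\frac{\mu(F^*\setminus F_i)}{\mu(F^*)-\mu(F_i)},
\end{equation*}
the first term is positive but the second subtracted term is also nonnegative, and $\mu(F^*\setminus F_i)/\bigl(\mu(F^*)-\mu(F_i)\bigr)\ge 1$; so the sign of $\Delta_{i+1}=f_8$ is not determined by $c_i(F^*)>c_i(F_i)$ alone. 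The paper instead rewrites $f_8$ with respect to $Z_i$ (Claim~\ref{claim:inv_min_cost_span_reformulating_Delta_iplusone}) and then invokes the invariant $c_i(F^*)=c_i(Z_i)$ (Claim~\ref{claim:inv_min_cost_span_correction_for_a_pair_v2}), which makes $f_8=-f_7\cdot\mu(F^*\setminus Z_i)/\bigl(\mu(F^*)-\mu(Z_i)\bigr)\ge 0$ transparent; and Claim~\ref{claim:inv_min_cost_span_nonneg_of_delta} (nonnegativity of $\delta_{i+1}$), which you use implicitly in every case, also leans on this invariant in its Cases~3 and~5 branches. That invariant --- that the memorized $Z_i$ (resp.\ $X_i$) stays exactly balanced, $c_i(F^*)=c_i(Z_i)$ --- is itself a separate, non-trivial fact proved via Lemma~\ref{lemma:inv_min_cost_span_correction} and Claim~\ref{claim:inv_min_cost_span_correction_for_a_pair}, not a rearrangement of a guard. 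So your plan underestimates the work: the sign of the Newton increment in the pair cases requires you to establish and carry the ``pair stays balanced'' invariant alongside the induction, which is the main structural ingredient your sketch omits.

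A smaller remark: in the base case you justify $\uout\le\uin$ by the \hyperlink{spec-lu}{(SPEC-LU)} inequality $\max\{w(s)\mid s\in S\setminus F^*\}\cdot\uout\le\min\{w(s)\mid s\in F^*\}\cdot\uin$ ``together with positivity of $w$.'' That inequality does not by itself give $\uout\le\uin$ (take $\min\{w\mid F^*\}>\max\{w\mid S\setminus F^*\}$ and both bounds positive). The inequality $\uout\le\uin$ is indeed true for the instances the algorithm actually produces, but it comes from the sorted construction in Section~\ref{sec:bounds} (the ordering $w(s_1)u(s_1)\ge\dots\ge w(s_n)u(s_n)$ with $F^*$ first), not from the stated \hyperlink{spec-lu}{(SPEC-LU)} inequality; the paper's base case also uses $\uout\le\uin$ silently, so this is a shared subtlety, but the justification you give is wrong as stated.
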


Now we are ready to prove the correctness and discuss the running time of the algorithm.

\begin{thm} \label{thm:inv_min_cost_span_correctness_of_algo}
Algorithm~\ref{algo:inv_min_cost_span} determines an optimal deviation vector, if exists, for the minimum-cost inverse optimization problem $\big( S, \mathcal{F}, F^*, c, \ell, u, \spa_w( \cdot ) \big)$ with bound-constraints satisfying \hyperlink{spec-lu}{\normalfont (SPEC-LU)} using $O\big( \|w\|_{\text{-}1}^{6} \big)$ calls to $\cO$.
\end{thm}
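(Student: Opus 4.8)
The plan is to split the argument into three parts: (i) termination within $O(\|w\|_{\text{-}1}^6)$ iterations; (ii) correctness when the algorithm returns a deviation vector; and (iii) correctness when it declares infeasibility. For the \emph{running-time bound}, I would argue that each iteration falls into exactly one of the five main cases and bound the number of iterations of each type separately. Iterations of type Case~1 (subcases \hyperlink{1.1}{1.1} and \hyperlink{1.2.1}{1.2.1}) are bounded by $O(\|w\|_{\text{-}1})$ via Lemma~\ref{lemma:step1}, since $\mu(Y_{i+1}\cap F^*)$ strictly increases along such iterations and is an integer in $[0,\|w\|_{\text{-}1}]$. Iterations where a small $F_i$ is eliminated together with a stored large set $Z_i$ (Case~3), or a large $F_i$ together with a stored small set $X_i$ (Case~5), are bounded by $O(\|w\|_{\text{-}1}^4)$ using Lemma~\ref{lemma:step2}: the 4-tuple $\big(\mu(F_i),\mu(Z_i),\mu(F_i\cap F^*),\mu(Z_i\cap F^*)\big)$ (resp. the $X_i$-version) never repeats, and each coordinate lies in $[0,\|w\|_{\text{-}1}]$. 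For runs of consecutive ``lonely'' small-set iterations (Case~2 with $Z_i=\ast$) or lonely large-set iterations (Case~4 with $X_i=\ast$), Lemma~\ref{lemma:step3} shows that within such a run the pair $\big(\mu(F_i),\mu(F_i\cap F^*)\big)$ is strictly lexicographically increasing, giving $O(\|w\|_{\text{-}1}^2)$ iterations per run; and since each maximal run of Case-2 (resp. Case-4) iterations is separated by an iteration of another type, the number of runs is controlled by the bounds just obtained, so the total is $O(\|w\|_{\text{-}1}^6)$. Each iteration makes one oracle call, which yields the stated bound.

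For \emph{correctness upon returning $\p{d_i,D_i}{\ell,u}{w}$}, the loop exits precisely when $c_i(F^*)\le c_i(F_i)$, i.e. $F^*$ is a minimum $c_i$-cost member of $\cF$; since $c_i=c-\p{d_i,D_i}{\ell,u}{w}$, condition~\ref{it:a} holds, and by Lemma~\ref{lemma:inv_min_cost_span_deltas_stay_in_the_intervals} we have $\lout\le D_i\le\uout$ and $\lin\le d_i+D_i\le\uin$, which by \hyperlink{spec-lu}{(SPEC-LU)} guarantees $\ell\le\p{d_i,D_i}{\ell,u}{w}\le u$, so~\ref{it:b} holds and the returned vector is feasible with $\spa_w\big(\p{d_i,D_i}{\ell,u}{w}\big)\le d_i$. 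It remains to argue optimality: I would show that $d_i$ never exceeds the optimum. By Lemma~\ref{lemma:inv_min_cost_span_opt_dev_vector_v2} there is an optimal $\p{\delta^*,\Delta^*}{\ell,u}{w}$ with $\lout\le\Delta^*\le\uout$, $\lin\le\delta^*+\Delta^*\le\uin$; one tracks the invariant that after each iteration $d_i$ is a lower bound on the span of \emph{any} feasible $\p{\delta,\Delta}{\ell,u}{w}$ obtained by eliminating the bad sets seen so far, using that each $f_j$ computes exactly the minimal additional change needed to make the current bad set non-bad — an increase of $\delta$ on $F^*$ and/or $\Delta$ on $S\setminus F^*$ — so that no feasible deviation vector of the special form can have smaller span than the one the algorithm converges to.

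For \emph{correctness upon declaring infeasibility}, in each \texttt{Infeasible} branch the condition that fails is exactly the one needed to keep $d_{i+1}+D_{i+1}$ or $D_{i+1}$ within $[\lin,\uin]$ or $[\lout,\uout]$ after forcing the change on one side to its extreme; combining Lemma~\ref{lemma:inv_min_cost_span_deltas_stay_in_the_intervals} with Lemma~\ref{lemma:inv_min_cost_span_n&s_cond_for_infeas} shows that if the problem were feasible, the relevant extremal deviation vector $\p{\uin-\lout,\lout}{\ell,u}{w}$, $\p{\uin-m_1',m_1'}{\ell,u}{w}$, etc., would already be feasible, contradicting the fact that eliminating the current bad set requires a change beyond the admissible range. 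The main obstacle, and the bulk of the work, is the optimality argument in part (ii): one must verify, across all twelve functions $f_1,\dots,f_{12}$ and all the case distinctions, that the algorithm's update is the \emph{smallest} span increase consistent with eliminating the current bad set (or pair of bad sets) while respecting (SPEC-LU), and that the stored sets $X_i,Y_i,Z_i$ correctly encode which earlier bad sets must still be accounted for — this is where the careful bookkeeping of Lemmas~\ref{lemma:step1}–\ref{lemma:step3} and the definitions of $\widehat{\cF}'$, $\cF''$, $\widehat{\cF}'''$ are used. I would handle the cases by grouping them according to which bounds are hit (neither, only on $F^*$, only on $S\setminus F^*$, both) and verifying the claimed invariants $d_{i+1}+D_{i+1}=\uin$ or $=\lin$ from Lemma~\ref{lemma:inv_min_cost_span_deltas_stay_in_the_intervals} in the boundary cases, reducing the remaining cases to the unconstrained analysis underlying the Newton scheme described in Section~\ref{sec:algorithm}.
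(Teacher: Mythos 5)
Your proof plan has the same overall decomposition as the paper's: (i) bound the number of iterations via Lemmas~\ref{lemma:step1}--\ref{lemma:step3}, (ii) establish optimality of the returned vector via Lemmas~\ref{lemma:inv_min_cost_span_opt_dev_vector_v2} and~\ref{lemma:inv_min_cost_span_deltas_stay_in_the_intervals}, and (iii) certify infeasibility via Lemma~\ref{lem:inv_min_cost_span_n&s_cond_for_infeas}. Parts (i) and (iii) match the paper essentially verbatim.

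The one place where you would likely get stuck is the optimality step (ii). You propose to maintain the invariant that ``after each iteration $d_i$ is a lower bound on the span of any feasible $\p{\delta,\Delta}{\ell,u}{w}$ obtained by eliminating the bad sets seen so far,'' justified by the claim that ``each $f_j$ computes exactly the minimal additional change needed.'' Two caveats. First, the minimality claim is not literally true in the boundary cases: $f_4$--$f_{12}$ compute \emph{coupled} updates to $\delta$ and $\Delta$ that clamp one side to $\lin$, $\uin$, $\lout$ or $\uout$ and solve for the other; the resulting span increment is forced, not chosen to be minimal, and verifying that it is nevertheless compatible with a global optimum requires the tightness identities $d_{i+1}+D_{i+1}=\uin$ (Cases~\hyperlink{2.1.2.1}{2.1.2.1}--\hyperlink{3.2.2.1}{3.2.2.1}) and $d_{i+1}+D_{i+1}=\lin$ (Cases~\hyperlink{4.2.1}{4.2.1}, \hyperlink{5.2.1}{5.2.1}) from Lemma~\ref{lemma:inv_min_cost_span_deltas_stay_in_the_intervals}, exactly as the paper does. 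Second, proving your invariant at \emph{every} iteration is more than is needed and is somewhat delicate because the algorithm discards bookkeeping information (e.g.\ resets $X$, $Z$ after Case~1). The paper avoids this by a cleaner argument: assume a feasible $\p{\delta,\Delta}{\ell,u}{w}$ with $\delta < d_{q+1}$ exists, take $q'$ to be the \emph{largest} index with $\delta_{q'+1}\neq 0$ (so $d_{q+1}=d_{q'+1}$), invoke Lemma~\ref{lemma:inv_min_cost_span_correction} to obtain $c_{q'+1}(F^*)=c_{q'+1}(F_{q'})$, and then show case-by-case that the hypothetical $\p{\delta,\Delta}{\ell,u}{w}$ would give $F_{q'}$ (and, in Cases~\hyperlink{3.1.1}{3.1.1}/\hyperlink{5.1.1}{5.1.1}, also $Z_{q'}$ or $X_{q'}$) strictly smaller reduced cost than $F^*$, contradicting feasibility. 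Adopting this ``last nontrivial step'' technique would make your step (ii) go through.
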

\begin{proof}
We discuss the time complexity and the correctness of the algorithm separately.
\medskip

\noindent \emph{Time complexity.} Recall that $w\in\mathbb{R}_+^S$ is scaled so that $\frac{1}{w}(X)$ is an integer for each $X\subseteq S$. We show that the algorithm terminates after at most $O(\|w\|_{\text{-}1}^6)$ iterations of the while loop. By Lemma~\ref{lemma:step1}, there are at most $O(\|w\|_{\text{-}1})$ iterations corresponding to Cases~\hyperlink{1.1}{1.1} and~\hyperlink{1.2.1}{1.2.1}. By Lemma~\ref{lemma:step2}, there are at most $O(\|w\|_{\text{-}1}^4)$ iterations corresponding to Cases~\hyperlink{3.1.1}{3.1.1}, \hyperlink{3.1.2.1}{3.1.2.1}, \hyperlink{3.2.1}{3.2.1}, \hyperlink{3.2.2.1}{3.2.2.1}, \hyperlink{5.1.1}{5.1.1}, \hyperlink{5.1.2.1}{5.1.2.1}, \hyperlink{5.2.1}{5.2.1}, and~\hyperlink{5.2.2.1}{5.2.2.1}. Between two such iterations, there are at most $O(\|w\|_{\text{-}1}^2)$ iterations corresponding to the remaining cases by Lemma~\ref{lemma:step3}. Hence the total number of iterations is at most $O(\|w\|_{\text{-}1}^6)$. 
\medskip

\noindent \emph{Infeasibility.} By the above, the algorithm terminates after a finite number of iterations. First, we show that if the algorithm returns \texttt{Infeasible}, then it correctly recognizes the problem to be infeasible. Assume that the algorithm terminates in the $i$th iteration and declares the problem to be infeasible. We distinguish different scenarios depending on which case the last step belongs to.
\medskip

\noindent \textbf{Case~\hyperlink{1.2.2}{1.2.2}.} Note that $\uin \ne + \infty$ and $\lout \ne - \infty$. In addition,
\begin{align*} 
(\uin - \lout) \cdot \mu(F^* \setminus F_i)
{}&{}< 
\Big( \uin - \big( \uin - d_i - D_i - f_1(c_i, F_i) \big) \Big) \cdot \mu(F^* \setminus F_i) \\[2pt]
{}&{}= 
\big( d_i + f_1(c_i, F_i) \big) \cdot \mu(F^* \setminus F_i)\\[2pt]
{}&{}= 
d_i \cdot \mu(F^* \setminus F_i) + \big( c_i(F^*) - c_i(F_i) \big) \\[2pt]
{}&{}= 
c_i(F^*) - c_i(F_i) + d_i \cdot \mu(F^* \setminus F_i) + D_i \cdot 0 \\[2pt]
{}&{}= 
c_i(F^*) - c_i(F_i) + d_i \cdot \mu(F^* \setminus F_i) + D_i \cdot \big( \mu(F^*) - \mu(F_i) \big)\\[2pt]
{}&{}= 
c(F^*) - c(F_i).
\end{align*}
Thus,
\begin{align*}
&(c - \p{\uin - \lout, \lout}{\ell, u}{w})(F^*) - (c - \p{\uin - \lout, \lout}{\ell, u}{w})(F_i) \\[2pt]
{}&{}~~= 
c(F^*) - c(F_i) - (\uin - \lout) \cdot \mu(F^* \setminus F_i) - \lout \cdot \big( \mu(F^*) - \mu(F_i) \big) \\[2pt]
{}&{}~~= 
c(F^*) - c(F_i) - (\uin - \lout) \cdot \mu(F^* \setminus F_i) - \lout \cdot 0\\[2pt]
{}&{}~~> 
0 ,
\end{align*}
so by Lemma~\ref{lem:inv_min_cost_span_n&s_cond_for_infeas}, the problem is infeasible.
\medskip

\noindent \textbf{Case~\hyperlink{2.1.2.2}{2.1.2.2} when $\boldsymbol{F_i\setminus S_0 \subseteq F^*\setminus S_0}$.} Note that $\uin \ne + \infty$. If $\lout \ne - \infty$, then
\begin{align*}
&(c - \p{\uin - \lout, \lout}{\ell, u}{w})(F^*) - (c - \p{\uin - \lout, \lout}{\ell, u}{w})(F_i) \\[2pt]
{}&{}= 
c(F^*) - c(F_i) - \uin \cdot \mu(F^* \setminus F_i) + \lout \cdot \mu(F_i \setminus F^*) \\[2pt]
{}&{}> 
c(F^*) - c(F_i) - \big( d_i + D_i + f_3(c_i, F_i) \big) \cdot \mu(F^* \setminus F_i) + \lout \cdot 0 \\[2pt]
{}&{}=
c(F^*) - c(F_i) - (d_i + D_i) \cdot \mu(F^* \setminus F_i) - f_3(c_i, F_i) \cdot \big( \mu(F^* \setminus F_i) - 0 \big) + D_i \cdot 0 \\[2pt]
{}&{}= 
c(F^*) - c(F_i) - (d_i + D_i) \cdot \mu(F^* \setminus F_i) - f_3(c_i, F_i) \cdot \big( \mu(F^* \setminus F_i) - \mu(F_i \setminus F^*) \big)\\[2pt]
{}&{}~~+ D_i \cdot \mu(F_i \setminus F^*) \\[2pt]
{}&{}= 
\big( c(F^*) - c(F_i) - (d_i + D_i) \cdot \mu(F^* \setminus F_i) + D_i \cdot \mu(F_i \setminus F^*) \big)  - f_3(c_i, F_i) \cdot \big( \mu(F^*) - \mu(F_i) \big) \\[2pt]
{}&{}= 
c_i(F^*) - c_i(F_i) - f_3(c_i, F_i) \cdot \big( \mu(F^*) - \mu(F_i) \big)\\[2pt]
{}&{}= 
0 ,
\end{align*}
and if $\lout = - \infty$, then the same calculation applies for $\p{\uin - m, m}{\ell, u}{w}$. So by Lemma~\ref{lem:inv_min_cost_span_n&s_cond_for_infeas}, the problem is infeasible.
\medskip

\noindent \textbf{Cases~\hyperlink{2.1.2.2}{2.1.2.2}, \hyperlink{2.2.2.2}{2.2.2.2}, \hyperlink{3.1.2.2}{3.1.2.2} and~\hyperlink{3.2.2.2}{3.2.2.2} when $\boldsymbol{F_i\setminus S_0 \nsubseteq F^*\setminus S_0}$.} Note that $\uin \ne + \infty$ and $\lout \ne - \infty$. In addition,
\begin{align*}
&d_i + D_i + f_4(c_i, d_i, D_i, F_i) + f_5(c_i, d_i, D_i, F_i) \\[2pt]
{}&{}~~= 
d_i + D_i + \frac{c_i(F_i) - c_i(F^*) + (\uin - d_i - D_i) \cdot \mu(F^* \setminus F_i)}{\mu(F_i \setminus F^*)} \\[2pt]
{}&{}~~~~+ \frac{c_i(F^*) - c_i(F_i) - (\uin - d_i - D_i) \cdot \big( \mu(F^*) - \mu(F_i) \big)}{\mu(F_i \setminus F^*)} \\[2pt]
{}&{}~~= 
d_i + D_i + (\uin - d_i - D_i) \cdot \frac{\mu(F^* \setminus F_i) - \big( \mu(F^*) - \mu(F_i) \big)}{\mu(F_i \setminus F^*)} \\[2pt] 
{}&{}~~= 
\uin .
\end{align*}
Using that $D_i + f_4(c_i, d_i, D_i, F_i) < \lout$, we obtain
\begin{align*}
&(c - \p{\uin - \lout, \lout}{\ell, u}{w})(F^*) - (c - \p{\uin - \lout, \lout}{\ell, u}{w})(F_i) \\[2pt]
{}&{}~~= 
c(F^*) - c(F_i) - \uin \cdot \mu(F^* \setminus F_i) + \lout \cdot \mu(F_i \setminus F^*) \\[2pt]
{}&{}~~> 
c(F^*) - c(F_i) - \big( d_i + D_i + f_4(c_i, d_i, D_i, F_i) + f_5(c_i, d_i, D_i, F_i) \big) \cdot \mu(F^* \setminus F_i) \\[2pt]
{}&{}~~~~+ \big( D_i + f_4(w_i, d_i, D_i, F_i) \big) \cdot \mu(F_i \setminus F) \\[2pt]
{}&{}~~= 
\big( c(F^*) - c(F_i) - (d_i + D_i) \cdot \mu(F^* \setminus F_i) + D_i \cdot \mu(F_i \setminus F^*) \big) \\[2pt]
{}&{}~~~~- f_4(c_i, d_i, D_i, F_i) \cdot \big( \mu(F^* \setminus F_i) - \mu(F_i \setminus F^*) \big)- f_5(c_i, d_i, D_i, F_i) \cdot \mu(F^* \setminus F_i) \\[2pt]
{}&{}~~= 
c_i(F^*) - c_i(F_i) - f_4(c_i, d_i, D_i, F_i) \cdot \big( \mu(F^*) - \mu(F_i) \big)- f_5(c_i, d_i, D_i, F_i) \cdot \mu(F^* \setminus F_i) \\[2pt]
{}&{}~~= 
c_i(F^*) - c_i(F_i)- \frac{c_i(F_i) - c_i(F^*) + (\uin - d_i - D_i) \cdot \mu(F^* \setminus F_i)}{\mu(F_i \setminus F^*)} \cdot \big( \mu(F^*) - \mu(F_i) \big) \\[2pt]
{}&{}~~~~- \frac{c_i(F^*) - c_i(F_i) - (\uin - d_i - D_i) \cdot \big( \mu(F^*) - \mu(F_i) \big)}{\mu(F_i \setminus F^*)} \cdot \mu(F^* \setminus F_i) \\[2pt]
{}&{}~~= \big( c_i(F^*) - c_i(F_i) \big) \cdot \left( 1 + \frac{\mu(F^*) - \mu(F_i)}{\mu(F_i \setminus F^*)} - \frac{\mu(F^* \setminus F_i)}{\mu(F_i \setminus F^*)} \right)\\[2pt]
{}&{}~~= 
0 ,
\end{align*}
so by Lemma~\ref{lem:inv_min_cost_span_n&s_cond_for_infeas}, the problem is infeasible.
\medskip 

\noindent \textbf{Cases~\hyperlink{2.2.2.2}{2.2.2.2} and~\hyperlink{3.2.2.2}{3.2.2.2} when $\boldsymbol{F_i\setminus S_0 \subseteq F^*\setminus S_0}$.} Note that $\uin \ne + \infty$. In addition,
\begin{align*}
\uin
{}&{}< 
d_i + f_6(c_i, d_i, F_i) + \uout \\[2pt]
{}&{}= 
d_i + \frac{c_i(F^*) - c_i(F_i) - (\uout - D_i) \cdot \big( \mu(F^*) - \mu(F_i) \big)}{\mu(F^* \setminus F_i)} + \uout \\[2pt]
{}&{}= 
d_i + \frac{\left( f_3(c_i, F_i) - (\uout - D_i) \right) \cdot \big( \mu(F^*) - \mu(F_i) \big)}{\mu(F^* \setminus F_i)} + \uout \\[2pt]
{}&{}= 
d_i + \frac{\left( f_3(c_i, F_i) - \uout + D_i \right) \cdot \mu(F^* \setminus F_i)}{\mu(F^* \setminus F_i)} + \uout\\[2pt] 
{}&{}= 
d_i + D_i + f_3(c_i, F_i) .
\end{align*}
Therefore, the exact same calculation applies as in Case~\hyperlink{2.1.2.2}{2.1.2.2} when $F_i\setminus S_0 \subseteq F\setminus S_0$. So by Lemma~\ref{lem:inv_min_cost_span_n&s_cond_for_infeas}, the problem is infeasible.
\medskip

\noindent \textbf{Case~\hyperlink{3.1.2.2}{3.1.2.2} when $\boldsymbol{F_i\setminus S_0 \subseteq F^*\setminus S_0}$.} Note that $\uin \ne + \infty$. In addition,
\begin{align*}
\uin 
{}&{}< 
d_i + D_i + f_7(c_i, F_i, Z_i) + f_8(c_i, F_i, Z_i) \\[2pt]
{}&{}= 
d_i + D_i + f_7(c_i, F_i, Z_i) + \frac{c_i(F^*) - c_i(F_i) - f_7(c_i, F_i, Z_i) \cdot \mu(F^* \setminus F_i)}{\mu(F^*) - \mu(F_i)} \\[2pt]
{}&{}= 
d_i + D_i + f_7(c_i, F_i, Z_i) + \frac{c_i(F^*) - c_i(F_i) - f_7(c_i, F_i, Z_i) \cdot \big( \mu(F^*) - \mu(F_i) \big)}{\mu(F^*) - \mu(F_i)} \\[2pt]
{}&{}= 
d_i + D_i + \frac{c_i(F^*) - c_i(F_i)}{\mu(F^*) - \mu(F_i)}\\[2pt]
{}&{}= 
d_i + D_i + f_3(c_i, F_i) .
\end{align*}
Therefore, the exact same calculation applies as in Case~\hyperlink{2.1.2.2}{2.1.2.2} when $F_i\setminus S_0 \subseteq F\setminus S_0$. So by Lemma~\ref{lem:inv_min_cost_span_n&s_cond_for_infeas}, the problem is infeasible.
\medskip

\noindent \textbf{Case~\hyperlink{4.1.2.2}{4.1.2.2} when $\boldsymbol{F^*\setminus S_0 \subseteq F_i\setminus S_0}$.} Note that $\lout \ne - \infty$. If $\uin \ne - \infty$, then
\begin{align*}
&( c - \p{\uin - \lout, \lout}{\ell, u}{w} )(F^*) - ( c - \p{\uin - \lout, \lout}{\ell, u}{w} )(F_i) \\[2pt]
{}&{}~~= 
c(F^*) - c(F_i) - \uin \cdot \mu(F^* \setminus F_i) + \lout \cdot \mu(F_i \setminus F^*) \\[2pt]
{}&{}~~> 
c(F^*) - c(F_i) - \uin \cdot 0 + \big( D_i + f_3(c_i, F_i) \big) \cdot \mu(F_i \setminus F^*) \\[2pt]
{}&{}~~= 
c(F^*) - c(F_i) - (d_i + D_i) \cdot 0 + D_i \cdot \mu(F_i \setminus F^*)+ f_3(c_i, F_i) \cdot \mu(F_i \setminus F^*) \\[2pt]
{}&{}~~= 
\big( c(F^*) - c(F_i) - (d_i + D_i) \cdot \mu(F^* \setminus F_i) + D_i \cdot \mu(F_i \setminus F^*) \big)+ f_3(c_i, F_i) \cdot \big( \mu(F_i) - \mu(F^*) \big) \\[2pt]
{}&{}~~=
c_i(F^*) - c_i(F_i) + f_3(c_i, F_i) \cdot \big( \mu(F_i) - \mu(F^*) \big)\\[2pt]
{}&{}~~= 0 ,
\end{align*}
and if $\uin = - \infty$, then the same calculation applies for $\p{M - \lout, \lout}{\ell, u}{w}$. So by Lemma~\ref{lem:inv_min_cost_span_n&s_cond_for_infeas}, the problem is infeasible.
\medskip

\noindent \textbf{Cases~\hyperlink{4.1.2.2}{4.1.2.2}, \hyperlink{4.2.2.2}{4.2.2.2}, \hyperlink{5.1.2.2}{5.1.2.2} and~\hyperlink{5.2.2.2}{5.2.2.2} when $\boldsymbol{F^*\setminus S_0 \nsubseteq F_i\setminus S_0}$.} Note that $\lout \ne - \infty$ and $\uin \ne + \infty$. Using that $d_i + f_9(c_i, d_i) > \uin - \lout$, we obtain
\begin{align*}
&(c - \p{\uin - \lout, \lout}{\ell, u}{w})(F^*) - (c - \p{\uin - \lout, \lout}{\ell, u}{w})(F_i) \\[2pt]
{}&{}~~= 
c(F^*) - c(F_i) - (\uin - \lout) \cdot \mu(F^* \setminus F_i) - \lout \cdot \big( \mu(F^*) - \mu(F_i) \big) \\[2pt]
{}&{}~~> 
c(F^*) - c(F_i) - \big( d_i + f_9(w_i, D_i, F_i) \big) \cdot \mu(F^* \setminus F_i)- \lout \cdot \big( \mu(F^*) - \mu(F_i) \big) \\[2pt]
{}&{}~~= 
\left( c(F^*) - c(F_i) - d_i \cdot \mu(F^* \setminus F_i) - D_i \cdot \big( \mu(F^*) - \mu(F_i) \big) \right) \\[2pt]
{}&{}~~~~- f_9(c_i, D_i, F_i) \cdot \mu(F^* \setminus F_i) - (\lout - D_i) \cdot \big( \mu(F^*) - \mu(F_i) \big) \\[2pt]
{}&{}~~= 
c_i(F^*) - c_i(F_i) - (\lout - D_i) \cdot \big( \mu(F^*) - \mu(F_i) \big)- f_9(c_i, D_i, F_i) \cdot \mu(F^* \setminus F_i) \\[2pt]
{}&{}~~= 
c_i(F^*) - c_i(F_i) - (\lout - D_i) \cdot \big( \mu(F^*) - \mu(F_i) \big) \\[2pt]
{}&{}~~~~- \frac{c_i(F^*) - c_i(F_i) - (\lout - D_i) \cdot \big( \mu(F^*) - \mu(F_i) \big)}{\mu(F^* \setminus F_i)} \cdot \mu(F^* \setminus F_i)\\[2pt]
{}&{}~~=
0 ,
\end{align*}
so by Lemma~\ref{lem:inv_min_cost_span_n&s_cond_for_infeas}, the problem is infeasible.
\medskip

\noindent \textbf{Cases~\hyperlink{4.2.2.2}{4.2.2.2} and~\hyperlink{5.2.2.2}{5.2.2.2} when $\boldsymbol{F^*\setminus S_0 \subseteq F_i\setminus S_0}$.} Note that $\lout \ne - \infty$. In addition,
\begin{align*}
\lout 
{}&{}> 
D_i + f_{10}(c_i, d_i, D_i, F_i)\\[2pt]
{}&{}= 
D_i + \frac{c_i(F_i) - c_i(F^*) + (\lin - d_i - D_i) \cdot \mu\left( F^* \setminus F_i \right)}{\mu\left( F_i \setminus F^* \right)} \\[2pt]
{}&{}= 
D_i + \frac{c_i(F_i) - c_i(F^*) + (\lin - d_i - D_i) \cdot 0}{\mu\left( F_i \setminus F^* \right)}\\[2pt]
{}&{}= D_i + \frac{c_i(F_i) - c_i(F^*)}{\mu(F_i) - \mu(F^*)}\\[2pt]
{}&{}= 
D_i + f_3(c_i, F_i) .
\end{align*}
Therefore, the exact same calculation applies as in Case~\hyperlink{4.1.2.2}{4.1.2.2} when $F_i\setminus S_0 \subseteq F\setminus S_0$. So by Lemma~\ref{lem:inv_min_cost_span_n&s_cond_for_infeas}, the problem is infeasible.
\medskip

\noindent \textbf{Case~\hyperlink{5.1.2.2}{5.1.2.2} when $\boldsymbol{F^*\setminus S_0 \subseteq F_i\setminus S_0}$.} Note that $\lout \ne - \infty$ and
\begin{align*}
\lout 
{}&{}> 
D_i + f_{12}(c_i, X_i, F_i) \\[2pt]
{}&{}=
D_i + \frac{c_i(F^*) - c_i(F_i) - f_7(c_i, X_i, F_i) \cdot \mu\left( F^* \setminus F_i \right)}{\mu(F^*) - \mu(F_i)} \\[2pt]
{}&{}= 
D_i + \frac{c_i(F^*) - c_i(F_i) - f_7(c_i, X_i, F_i) \cdot 0}{\mu(F^*) - \mu(F_i)} = D_i + f_3(c_i, F_i) .
\end{align*}
Therefore, the exact same calculation applies as in Case~\hyperlink{4.1.2.2}{4.1.2.2} when $F_i\setminus S_0 \subseteq F\setminus S_0$. So by Lemma~\ref{lem:inv_min_cost_span_n&s_cond_for_infeas}, the problem is infeasible.
\medskip

\noindent \emph{Optimality.} Assume now that the algorithm terminates with returning a vector whose feasibility follows from the fact that the while loop ended. If $F^*$ is a minimum $c_0$-cost member of $\cF$, then we are clearly done. Otherwise, there exists an index $q$ such that $F^*$ is a minimum $c_{q+1}$-cost member of $\mathcal{F}$. Suppose to the contrary that $\p{d_{q+1}, D_{q+1}}{\ell, u}{w}$ is not optimal. By Lemma~\ref{lemma:inv_min_cost_span_opt_dev_vector_v2}, there exists $\delta, \Delta \in \mathbb{R}$ such that $\delta<d_{q+1}$, the deviation vector $\p{\delta, \Delta}{\ell, u}{w}$ is optimal, $\lin \le \delta + \Delta \le \uin$, and $\lout \le \Delta \le \uout$. By Lemma~\ref{lemma:inv_min_cost_span_deltas_stay_in_the_intervals}, we know that $\lin \le d_{q+1} + D_{q+1} \le \uin$ and $\lout \le D_{q+1} \le \uout$ hold. If all steps correspond to Cases~\hyperlink{2.1.1}{2.1.1} and~\hyperlink{4.1.1}{4.1.1}, then $\spa_w(\p{d_{q+1}, D_{q+1}}{\ell, u}{w})=d_{q+1}=0\leq\delta$, a contradiction. Otherwise, let $q'$ be the largest index for which $d_{q'+1} \ne 0$. Note that $d_{q+1}=d_{q'+1}$. We arrive to a contradiction using different arguments, depending on which case step $q'$ belongs to. \medskip

\noindent \textbf{Cases~\hyperlink{1.1}{1.1} and~\hyperlink{1.2.1}{1.2.1}.} 
By Lemma~\ref{lemma:inv_min_cost_span_correction}, we have
\begin{align*}
0
{}&{}\ge 
( c - \p{\delta, \Delta}{\ell, u}{w} )(F^*) - ( c - \p{\delta, \Delta}{\ell, u}{w} )(F_{q'}) \\[2pt]
{}&{}= 
\left( c(F^*) - \sum_{\makemathbox[.7\width]{s \in F^*\setminus S_0}} \frac{\delta}{w(s)} - \sum_{\makemathbox[.7\width]{s \in F^*\setminus S_0}} \frac{\Delta}{w(s)} \right)- \left( c(F_q) - \sum_{\makemathbox[.9\width]{s \in (F_q\setminus  S_0) \cap (F^*\setminus S_0)}} \frac{\delta}{w(s)} - \sum_{\makemathbox[.7\width]{s \in F_q\setminus  S_0}} \frac{\Delta}{w(s)} \right) \\[2pt]
{}&{}= 
c(F^*) - c(F_{q'}) - \delta \cdot \mu\left( F^*\setminus  F_{q'}  \right) - \Delta \cdot \left( \mu(F^*) - \mu(F_{q'}) \right) \\[2pt]
{}&{}= c(F^*) - c(F_{q'}) - \delta \cdot \mu\left( F^* \setminus  F_{q'}  \right) - \Delta \cdot 0 \\[2pt]
{}&{}> 
c(F^*) - c(F_{q'}) - d_{{q'}+1} \cdot \mu\left( F^* \setminus  F_{q'}  \right) - D_{q'} \cdot 0 \\[2pt]
{}&{}= 
c(F^*) - c(F_{q'}) - d_{{q'}+1} \cdot \mu\left( F^* \setminus  F_{q'}  \right) - D_{q'} \cdot\left( \mu(F^*) - \mu(F_{q'}) \right) \\[2pt]
{}&{}= 
c_{{q'}+1}(F^*) - c_{{q'}+1}(F_{q'})\\[2pt] 
{}&{}= 
0 ,
\end{align*}
a contradiction.
\medskip

\noindent\textbf{Cases~\hyperlink{2.1.2.1}{2.1.2.1}, \hyperlink{2.2.2.1}{2.2.2.1}, \hyperlink{3.1.2.1}{3.1.2.1}, and~\hyperlink{3.2.2.1}{3.2.2.1}.} Note that
\begin{align*}
d_{q'+1} + D_{q'+1}
{}&{}= 
d_{q'} + \delta_{q'+1} + D_{q'} + \Delta_{q'+1}\\[2pt]
{}&{}= 
d_{q'} + f_5(c_{q'}, d_{q'}, D_{q'}, F_{q'}) + D_{q'} + f_4(c_{q'}, d_{q'}, D_{q'}, F_{q'}) \\[2pt]
{}&{}= 
d_{q'} + \frac{c_{q'}(F^*) - c_{q'}(F_{q'}) - (\uin - d_{q'} - D_{q'}) \cdot \left( \mu(F^*) - \mu(F_{q'}) \right)}{\mu\left( (F_{q'}\setminus  S_0)\setminus  (F^*\setminus S_0) \right)} \\[2pt]
{}&{}~~+ D_{q'} + \frac{c_{q'}(F_{q'}) - c_{q'}(F^*) + (\uin - d_{q'} - D_{q'}) \cdot \mu\left( (F^*\setminus S_0)\setminus  (F_{q'}\setminus  S_0) \right)}{\mu\left( (F_{q'}\setminus  S_0)\setminus  (F^*\setminus S_0) \right)} \\[2pt]
{}&{}= 
d_{q'} + D_{q'} + (\uin - d_{q'} - D_{q'}) \cdot \left( - \frac{\left( \mu(F^*) - \mu(F_{q'}) \right)}{\mu\left( (F_{q'}\setminus  S_0)\setminus  (F^*\setminus S_0) \right)} + \frac{\mu\left( F^*\setminus F_{q'} \right)}{\mu\left( F_{q'} \setminus F^* \right)} \right) \\[2pt]
{}&{}= 
d_{q'} + D_{q'} + (\uin - d_{q'} - D_{q'})\\[2pt]
{}&{}= 
\uin .
\end{align*}
Therefore, by Lemma \ref{lemma:inv_min_cost_span_correction}, we have
\begin{align*}
0 
{}&{}\ge 
( c - \p{\delta, \Delta}{\ell, u}{w} )(F^*) - ( c - \p{\delta, \Delta}{\ell, u}{w} )(F_{q'}) \\[2pt]
{}&{}=
c(F^*) - c(F_{q'}) - (\delta + \Delta) \cdot \left( \mu(F^*) - \mu(F_{q'}) \right) - \delta \cdot \mu\left( F_{q'} \setminus F^* \right) \\[2pt]
{}&{}> 
c(F^*) - c(F_{q'}) - \uin \cdot \left( \mu(F^*) - \mu(F_{q'}) \right) - d_{q'+1} \cdot \mu\left( F_{q'} \setminus  F^* \right) \\[2pt]
{}&{}= 
c(F^*) - c(F_{q'}) - (d_{q'+1} + D_{q'+1}) \cdot \left( \mu(F^*)- \mu(F_{q'}) \right)- d_{q'+1} \cdot \mu\left( F_{q'} \setminus  F^*\right) \\[2pt]
{}&{}= 
c_{q'+1}(F^*) - c_{q'+1}(F_{q'})\\[2pt]
{}&{}= 
0,
\end{align*}
a contradiction.
\medskip

\noindent\textbf{Cases~\hyperlink{2.2.1}{2.2.1} and~\hyperlink{3.2.1}{3.2.1}.} Note that $D_{q'+1} = D_{q'} + \Delta_{q'+1} = D_{q'} + (\uout - D_q') = \uout$ holds. Therefore, by Lemma~\ref{lemma:inv_min_cost_span_correction}, we have
\begin{align*}
0
{}&{}\ge 
( c - \p{\delta, \Delta}{\ell, u}{w} )(F^*) - ( c - \p{\delta, \Delta}{\ell, u}{w})(F_{q'}) \\[2pt]
{}&{}= 
c(F^*) - c(F_{q'}) - \delta \cdot \mu( F^*\setminus F_{q'}) - \Delta \cdot \left( \mu(F^*) - \mu(F_{q'}) \right) \\[2pt]
{}&{}> 
c(F^*) - c(F_{q'}) - d_{q'+1} \cdot \mu(F^*\setminus F_{q'}) - \uout \cdot \left( \mu(F^*) - \mu(F_{q'}) \right) \\[2pt]
{}&{}= 
c_{q'+1}(F^*) - c_{q'+1}(F_{q'})\\[2pt]
{}&{}= 
0 ,
\end{align*}
a contradiction.
\medskip

\noindent \textbf{Case~\hyperlink{3.1.1}{3.1.1}.} We have
\begin{align*}
0 
{}&{}\ge 
( c - \p{\delta, \Delta}{\ell, u}{w} )(F^*) - ( c - \p{\delta, \Delta}{\ell, u}{w} )(F_{q'}) \\[2pt]
{}&{}= 
c(F^*) - c(F_{q'}) - \delta \cdot \mu\left( F^*\setminus F_{q'} \right) - \Delta \cdot \left( \mu(F^*) - \mu(F_{q'}) \right) \\[2pt]
{}&{}> 
c(F^*) - c(F_{q'}) - d_{q'+1} \cdot \mu(F^*\setminus F_{q'}) - \Delta \cdot \left( \mu(F^*) - \mu(F_{q'}) \right)
\end{align*}
and
\begin{align*}
0 
{}&{}\ge 
( c - p )(F^*) - ( c - p )(Z_{q'}) \\[2pt]
{}&{}= 
c(F^*) - c(Z_{q'}) - \delta \cdot \mu(F^*\setminus Z_{q'}) - \Delta \cdot \left( \mu(F^*) - \mu(Z_{q'}) \right) \\[2pt]
{}&{}> 
c(F^*) - c(Z_{q'}) - d_{q'+1} \cdot \mu\left( F^* \setminus Z_{q'} \right) - \Delta \cdot \left( \mu(F^*) - \mu(Z_{q'}) \right).
\end{align*}
These together imply
\begin{align*}
\frac{c(F^*) - c(F_{q'}) - d_{q'+1} \cdot \mu(F^*\setminus F_{q'})}{\mu(F^*) - \mu(F_{q'})} &< \Delta,\ \text{and} \\[2pt]
\frac{c(F^*) - w(Z_{q'}) - d_{q'+1} \cdot \mu(F^* \setminus Z_{q'})}{\mu(F^*) - \mu(Z_{q'})} &>\Delta.
\end{align*}
Therefore, we have 
\begin{align*}
d_{q'+1} 
{}&{}> 
\frac{\hphantom{x} \displaystyle \frac{c(F^*) - c(F_{q'})}{\mu(F^*) - \mu(F_{q'})} - \frac{c(F^*) - c(Z_{q'})}{\mu(F^*) - \mu(Z_{q'})} \hphantom{x}}{\hphantom{x} \displaystyle \frac{\mu(F^*\setminus F_{q'})}{\mu(F^*) - \mu(F_{q'})} - \frac{\mu(F^* \setminus Z_{q'})}{\mu(F^*) - \mu(Z_{q'})} \hphantom{x}} \\[7pt]
{}&{}= 
\frac{\hphantom{x} \displaystyle \frac{c_{q'}(F^*) - c_{q'}(F_{q'}) + d_{q'} \cdot \mu\left( F^*\setminus F_{q'} \right) + D_{q'} \cdot \left( \mu(F^*) - \mu(F_{q'}) \right)}{\mu(F^*) - \mu(F_{q'})}\hphantom{x} }{\displaystyle \frac{\mu(F^*\setminus F_{q'})}{\mu(F^*) - \mu(F_{q'})} - \frac{\mu(F^* \setminus Z_{q'})}{\mu(F^*) - \mu(Z_{q'})}} \\[7pt]
{}&{}~~ - \frac{\hphantom{x} \displaystyle \frac{c_{q'}(F^*) - c_{q'}(Z_{q'}) + d_{q'} \cdot \mu(F^* \setminus Z_{q'}) + D_{q'} \cdot \left( \mu(F^*) - \mu(Z_{q'}) \right)}{\mu(F^*) - \mu(Z_{q'})} \hphantom{x}}{\displaystyle \frac{\mu(F^*\setminus F_{q'})}{\mu(F^*) - \mu(F_{q'})} - \frac{\mu(F^* \setminus Z_{q'})}{\mu(F^*) - \mu(Z_{q'})}} \\[7pt]
{}&{}= 
\frac{\hphantom{x} \displaystyle \frac{c_{q'}(F^*) - c_{q'}(F_{q'})}{\mu(F^*) - \mu(F_{q'})} - \frac{c_{q'}(F^*) - c_{q'}(Z_{q'})}{\mu(F^*) - \mu(Z_{q'})} \hphantom{x}}{\displaystyle \frac{\mu( F^*\setminus F_{q'})}{\mu(F^*) - \mu(F_{q'})} - \frac{\mu(F^* \setminus Z_{q'})}{\mu(F^*) - \mu(Z_{q'})}} + d_{q'}.
\end{align*}
By the above, we obtain 
\begin{equation*} 
\delta_{q'+1} 
> 
\frac{\hphantom{x} \displaystyle \frac{c_{q'}(F^*) - c_{q'}(F_{q'})}{\mu(F^*) - \mu(F_{q'})} - \frac{c_{q'}(F^*) - c_{q'}(Z_{q'})}{\mu(F^*) - \mu(Z_{q'})} \hphantom{x}}{\hphantom{x} \displaystyle \frac{\mu(F^*\setminus F_{q'})}{\mu(F^*) - \mu(F_{q'})} - \frac{\mu(F^* \setminus Z_{q'})}{\mu(F^*) - \mu(Z_{q'})} \hphantom{x}}= 
f_7(c_{q'}, F_{q'}, Z_{q'})= 
\delta_{q'+1},
\end{equation*}
a contradiction.
\medskip

\noindent \textbf{Cases~\hyperlink{4.1.2.1}{4.1.2.1}, \hyperlink{4.2.2.1}{4.2.2.1}, \hyperlink{5.1.2.1}{5.1.2.1} and~\hyperlink{5.2.2.1}{5.2.2.1}.} Note that $D_{q'+1} = D_{q'} + \Delta_{q'+1} = D_{q'} + (\lout - D_{q'}) = \lout$ holds. Thus, by Lemma~\ref{lemma:inv_min_cost_span_correction},
\begin{align*}
0
{}&{}\ge 
( c - \p{\delta, \Delta}{\ell, u}{w} )(F^*) - ( c -\p{\delta, \Delta}{\ell, u}{w} )(F_{q'}) \\[2pt]
{}&{}= 
c(F^*) - c(F_{q'}) - \delta \cdot \mu(F^*\setminus F_{q'}) - \Delta \cdot \left( \mu(F^*) - \mu(F_{q'}) \right) \\[2pt]
{}&{}> 
c(F^*) - c(F_{q'}) - d_{q'+1} \cdot \mu(F^*\setminus F_{q'}) - \lout \cdot \left( \mu(F^*) - \mu(F_{q'}) \right) \\[2pt]
{}&{}= 
c(F^*) - c(F_{q'}) - d_{q'+1} \cdot \mu(F^*\setminus F_{q'}) - D_{q'+1} \cdot \left( \mu(F^*) - \mu(F_{q'}) \right) \\[2pt]
{}&{}= 
c_{q'+1}(F^*) - c_{q'+1}(F_q') \\[2pt]
{}&{}= 0 ,
\end{align*}
a contradiction.
\medskip

\noindent \textbf{Cases~\hyperlink{4.2.1}{4.2.1} and~\hyperlink{5.2.1}{5.2.1}.} Note that
\begin{align*}
d_{q'+1} + D_{q'+1}
{}&{}= d_{q'} + \delta_{q'+1} + D_{q'} + \Delta_{q'+1}\\[2pt] 
{}&{}= d_{q'} + f_{11}(c_{q'}, d_{q'}, D_{q'}, F_{q'}) + D_{q'} + f_{10}(c_{q'}, d_{q'}, D_{q'}, F_{q'}) \\[2pt]
{}&{}= 
d_{q'} + \frac{c_{q'}(F^*) - c_{q'}(F_{q'}) - (\lin - d_{q'} - D_{q'}) \cdot \left( \mu(F^*) - \mu(F_{q'}) \right)}{\mu(F_{q'} \setminus F^*)} \\[2pt]
{}&{}~~ + 
D_{q'} + \frac{c_{q'}(F_{q'}) - c_{q'}(F^*) + (\lin - d_{q'} - D_{q'}) \cdot \mu\left( F^*\setminus F_{q'} \right)}{\mu(F_{q'} \setminus F^*)} \\[2pt]
{}&{}= 
d_{q'} + D_{q'} + (\lin - d_{q'} - D_{q'}) \cdot \left( - \frac{\mu(F^*) - \mu(F_{q'})}{\mu(F_{q'} \setminus F^*)} + \frac{\mu(F^*\setminus F_{q'})}{\mu(F_{q'} \setminus F^*)} \right) \\[2pt]
{}&{}= 
d_{q'} + D_{q'} + (\lin - d_{q'} - D_{q'})\\[2pt]
{}&{}= 
\lin.
\end{align*}
Thus, by Lemma~\ref{lemma:inv_min_cost_span_correction},
\begin{align*}
0 
{}&{}\ge 
( c - \p{\delta, \Delta}{\ell, u}{w} )(F^*) - ( c - \p{\delta, \Delta}{\ell, u}{w} )(F_{q'}) \\[2pt]
{}&{}= 
c(F^*) - c(F_{q'}) - (\delta + \Delta) \cdot \left( \mu(F^*) - \mu(F_{q'}) \right) - \delta \cdot \mu(F_{q'} \setminus F^*) \\[2pt]
{}&{}> 
c(F^*) - c(F_{q'}) - \lin \cdot \left( \mu(F^*) - \mu(F_{q'}) \right) - d_{q'+1} \cdot \mu(F_{q'} \setminus F^*) \\[2pt]
{}&{}= 
c(F^*) - c(F_{q'}) - (d_{q'+1} + D_{q'+1}) \cdot \left( \mu(F^*) - \mu(F_{q'}) \right)- d_{q'+1} \cdot \mu(F_{q'} \setminus F^*) \\[2pt]
{}&{}= 
c_{q'+1}(F^*) - c_{q'+1}(F_{q'})\\[2pt]
{}&{}= 
0 ,
\end{align*}
a contradiction.
\medskip

\noindent \textbf{Case~\hyperlink{5.1.1}{5.1.1}.} Similarly as in Case~\hyperlink{3.1.1}{3.1.1}, we obtain
\begin{align*}
\frac{c(F^*) - c(X_{q'}) - d_{q'+1} \cdot \mu(F^* \setminus X_{q'})}{\mu(F^*) - \mu(X_{q'})} &< \Delta,\ \text{and} \\[2pt]
\frac{c(F^*) - c(F_{q'}) - d_{q'+1} \cdot \mu(F^*\setminus F_{q'})}{\mu(F^*) - \mu(F_{q'})} &>\Delta.
\end{align*}
Therefore, we get
\begin{align*}
\delta_{q'+1} 
> 
\frac{\hphantom{x} \displaystyle \frac{c_{q'}(F^*) - c_{q'}(X_{q'})}{\mu(F^*) - \mu(X_{q'})} - \frac{c_{q'}(F^*) - c_{q'}(F_{q'})}{\mu(F^*) - \mu(F_{q'})} \hphantom{x}}{\displaystyle \frac{\mu( F^* \setminus X_{q'})}{\mu(F^*) - \mu(X_{q'})} - \frac{\mu(F^*\setminus F_{q'})}{\mu(F^*) - \mu(F_{q'})}}
= 
f_7(c_{q'}, X_{q'}, F_{q'})
= 
\delta_{q'+1},
\end{align*}
a contradiction.
\end{proof}

As we showed in Section~\ref{sec:algorithm} that an arbitrary instance $\big( S, \cF, F^*, c, \ell, u, \spa_w(\cdot) \big)$ of the constrained min\-i\-mum-cost inverse optimization problem under the weighted span objective can be reduced to solving $O(n^2)$ subproblems, we get the following result.

\begin{cor}
There exists an algorithm that determines an optimal deviation vector, if exists, for the bound-constrained minimum-cost inverse optimization problem under the weighted span objective using $O(n^2\cdot\|w\|_{\text{-}1}^6)$ calls to $\cO$.
\end{cor}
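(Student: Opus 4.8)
The plan is to assemble the statement from the three ingredients developed above: the reduction to special-form deviation vectors (Corollary~\ref{cor:inv_min_cost_span_opt_dev_vector}), the interval enumeration of Section~\ref{sec:guess}, and the per-subproblem algorithm of Theorem~\ref{thm:inv_min_cost_span_correctness_of_algo}. First I would apply Corollary~\ref{cor:inv_min_cost_span_opt_dev_vector}: if the instance $\big( S, \cF, F^*, c, \ell, u, \spa_w(\cdot) \big)$ is feasible, then some optimal deviation vector equals $\p{\delta,\Delta}{\ell,u}{w}$ for parameters satisfying $\Delta \le \min\{w(s)\cdot u(s) \mid s\in S\}$ and $\delta+\Delta \ge \max\{w(s)\cdot\ell(s)\mid s\in S\}$, so the task reduces to locating the pair $(\Delta,\delta+\Delta)$.

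Next, following Section~\ref{sec:guess}, I would fix the ordering of $S$ making $w(s_i)\ell(s_i)$ and $w(s_i)u(s_i)$ monotone with $F^* = \{s_1,\dots,s_{|F^*|}\}$, build the interval families $\cI_\Delta$ and $\cI_{\delta+\Delta}$, and use the bounds above to conclude that $\Delta$ lies in some member of $\cI_\Delta$ and $\delta+\Delta$ in some member of $\cI_{\delta+\Delta}$. Since $|\cI_\Delta|\le n-|F^*|$ and $|\cI_{\delta+\Delta}|\le|F^*|$, there are at most $|F^*|\cdot(n-|F^*|)=O(n^2)$ pairs of intervals. For each pair, Section~\ref{sec:bounds} shows that one may additionally pin $\p{\delta,\Delta}{\ell,u}{w}$ to its forced bounds and rewrite $\ell,u$ into the form \hyperlink{spec-lu}{\normalfont (SPEC-LU)} without changing the feasibility or the set of optimal special-form deviation vectors of that subproblem. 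I would then run Algorithm~\ref{algo:inv_min_cost_span} on each of the $O(n^2)$ subproblems; by Theorem~\ref{thm:inv_min_cost_span_correctness_of_algo} it either outputs an optimal deviation vector for the subproblem or certifies its infeasibility, using $O(\|w\|_{\text{-}1}^6)$ oracle calls each, for $O(n^2\cdot\|w\|_{\text{-}1}^6)$ calls in total.

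It remains to combine the outcomes. Any vector returned for a subproblem is feasible for the original instance (the subproblem's feasible region is contained in that of the original), so the algorithm outputs the returned vector of minimum weighted span over all subproblems, or \texttt{Infeasible} if every subproblem is infeasible. For correctness: if the original instance is feasible, the optimal $\p{\delta,\Delta}{\ell,u}{w}$ guaranteed by Corollary~\ref{cor:inv_min_cost_span_opt_dev_vector} is feasible for the subproblem whose intervals contain its $\Delta$ and $\delta+\Delta$, hence that subproblem is feasible and its optimum has weighted span equal to the global optimum; since every other returned vector has weighted span at least the global optimum, the reported minimum is attained and equals it. The only point needing care — and it is already settled by the second halves of Corollary~\ref{cor:inv_min_cost_span_opt_dev_vector} and by the observations of Section~\ref{sec:bounds} — is that neither restricting $(\Delta,\delta+\Delta)$ to a fixed pair of intervals nor rewriting the bounds to \hyperlink{spec-lu}{\normalfont (SPEC-LU)} form discards the global optimum, so no genuine obstacle remains.
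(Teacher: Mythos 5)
Your proposal is correct and follows the same route the paper takes: reduce via Corollary~\ref{cor:inv_min_cost_span_opt_dev_vector} to the special form, enumerate the $O(n^2)$ pairs of intervals from $\cI_\Delta\times\cI_{\delta+\Delta}$, convert each subproblem's bounds to \hyperlink{spec-lu}{(SPEC-LU)} form (Section~\ref{sec:bounds}), solve each with Algorithm~\ref{algo:inv_min_cost_span} in $O(\|w\|_{\text{-}1}^6)$ oracle calls via Theorem~\ref{thm:inv_min_cost_span_correctness_of_algo}, and return the best. You fill in the combining step more explicitly than the paper's one-line argument, and your justification that nothing is lost when restricting to a fixed interval pair (the global optimum's $(\Delta,\delta+\Delta)$ lands in some pair, so that subproblem achieves the global minimum) is exactly the intended reasoning.
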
 

\section{Min-max characterization and multiple costs} 
\label{sec:minmax}

With the help of Corollary~\ref{cor:inv_min_cost_span_opt_dev_vector}, we give a min-max characterization for the weighted span of an optimal deviation vector in the unconstrained setting, even for the case of multiple cost functions. The theorem may seem complicated at first glance as it involves a rather complex formula. However, let us note that the fractions appearing on the maximum side of the characterization are natural lower bounds for the minimum span of a feasible deviation vector, and the complexity of the formula is simply due to the presence of weights and bound constraints.

\begin{thm} \label{thm:inv_min_cost_span_minmax}
Let $\big( S, \mathcal{F},F^*, \{c^j\}_{j\in[k]}, -\infty,  +\infty, \spa_w (\cdot) \big)$ be a feasible minimum-cost inverse optimization problem. Then
\begin{gather*}
\min \left\{\textstyle\spa_w(p) \bigm| \text{$p$ is a feasible deviation vector} \right\} \\
= \max \{ 0,\, \omega_1,\, \omega_2 \},
\end{gather*}
where
\begin{align*}
\omega_1{}&{}\coloneqq \max \left\{ \frac{c^j(F^*) - c^j(F'')}{\tfrac{1}{w}(F^*\setminus  F'')}\, \middle|\, j\in[k],\, F'' \in \mathcal{F}\setminus\{F^*\}, \, \tfrac{1}{w}(F'') = \tfrac{1}{w}(F^*) \right\},\ \text{and}\\[2pt]
\omega_2{}&{}\coloneqq 
\max \left\{\frac{\displaystyle \frac{c^{j_1}(F^*) - c^{j_1}(F')}{\tfrac{1}{w}(F^*) - \tfrac{1}{w}(F')} - \frac{c^{j_2}(F^*) - c^{j_2}(F''')}{\tfrac{1}{w}(F^*) - \tfrac{1}{w}(F''')}}{\displaystyle \frac{\tfrac{1}{w}(F^*\setminus  F')}{\tfrac{1}{w}(F^*) - \tfrac{1}{w}(F')} - \frac{\tfrac{1}{w}(F^*\setminus  F''')}{\tfrac{1}{w}(F^*) - \tfrac{1}{w}(F''')}} \, \middle|\, \substack{ \displaystyle j_1,j_2\in[k],\ F', F''' \in \mathcal{F}, \\[2pt]  \displaystyle \tfrac{1}{w}(F') < \tfrac{1}{w}(F^*) < \tfrac{1}{w}(F''')}\right\}.
\end{align*}
\end{thm}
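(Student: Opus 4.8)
The plan is to prove the identity by establishing the two inequalities separately: that \emph{every} feasible deviation vector $p$ satisfies $\spa_w(p)\ge\max\{0,\omega_1,\omega_2\}$, and that \emph{some} feasible deviation vector attains this value; since the left-hand side is the minimum over feasible deviation vectors, these two facts together give the theorem. Throughout I use the convention that a maximum over an empty index set is $-\infty$, so that $\max\{0,\omega_1,\omega_2\}\ge0$ is meaningful even when $\mathcal{F}$ has no set of a relevant type, and I note that in the degenerate cases $F^*\in\{\emptyset,S\}$ both $\omega_1$ and $\omega_2$ are vacuous, so the claimed value is $0$.

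For the lower bound, let $p$ be feasible, put $M\coloneqq\max_{s\in S}w(s)p(s)$ and $m\coloneqq\min_{s\in S}w(s)p(s)$, so that $\spa_w(p)=M-m$ and $m\cdot\tfrac{1}{w}(X)\le p(X)\le M\cdot\tfrac{1}{w}(X)$ for every $X\subseteq S$. For any $j\in[k]$ and $F\in\mathcal{F}$, optimality of $F^*$ with respect to $c^j-p$ rearranges to $c^j(F^*)-c^j(F)\le p(F^*\setminus F)-p(F\setminus F^*)\le M\cdot\tfrac{1}{w}(F^*\setminus F)-m\cdot\tfrac{1}{w}(F\setminus F^*)$. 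Applied to an $F''$ with $\tfrac{1}{w}(F'')=\tfrac{1}{w}(F^*)$, for which $\tfrac{1}{w}(F^*\setminus F'')=\tfrac{1}{w}(F''\setminus F^*)$, this collapses to $c^j(F^*)-c^j(F'')\le(M-m)\cdot\tfrac{1}{w}(F^*\setminus F'')$ with $\tfrac{1}{w}(F^*\setminus F'')>0$ (since $F''\ne F^*$ has the same size), hence $\spa_w(p)\ge\omega_1$; and $\spa_w(p)\ge0$ is trivial. For $\omega_2$ I apply the displayed inequality once to a small $F'$ with an index $j_1$ and once to a large $F'''$ with an index $j_2$; writing $M=m+\spa_w(p)$ and using $\tfrac{1}{w}(F^*\setminus F')-\tfrac{1}{w}(F'\setminus F^*)=\tfrac{1}{w}(F^*)-\tfrac{1}{w}(F')>0$ together with $\tfrac{1}{w}(F'''\setminus F^*)-\tfrac{1}{w}(F^*\setminus F''')=\tfrac{1}{w}(F''')-\tfrac{1}{w}(F^*)>0$, the first inequality becomes a lower bound and the second an upper bound on $m$, each affine in $\spa_w(p)$. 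Their mutual consistency, after clearing the positive denominators $\tfrac{1}{w}(F^*)-\tfrac{1}{w}(F')$ and $\tfrac{1}{w}(F''')-\tfrac{1}{w}(F^*)$, is exactly the statement that $\spa_w(p)$ is at least the fraction defining $\omega_2$ for the tuple $(j_1,j_2,F',F''')$; here $\tfrac{1}{w}(F^*\setminus F')>0$ because $F'$ is small, which is what keeps the overall denominator of that fraction strictly positive. Maximizing over all admissible tuples gives $\spa_w(p)\ge\omega_2$.

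For the upper bound, set $\delta^*\coloneqq\max\{0,\omega_1,\omega_2\}$ and consider the special-form vector $\p{\delta^*,\Delta}{}{w}$, which assigns $(\delta^*+\Delta)/w(s)$ to $s\in F^*$ and $\Delta/w(s)$ to $s\notin F^*$; its weighted span equals $\delta^*$ when $\emptyset\subsetneq F^*\subsetneq S$ and equals $0=\delta^*$ otherwise. For each $j$ and $F$, the inequality $(c^j-p)(F^*)\le(c^j-p)(F)$ expands to $c^j(F^*)-c^j(F)\le(\delta^*+\Delta)\cdot\tfrac{1}{w}(F^*\setminus F)-\Delta\cdot\tfrac{1}{w}(F\setminus F^*)$: for $\tfrac{1}{w}(F)=\tfrac{1}{w}(F^*)$ the $\Delta$-terms cancel and the inequality holds because $\delta^*\ge\omega_1$; for $\tfrac{1}{w}(F)<\tfrac{1}{w}(F^*)$ it is equivalent to a lower bound on $\Delta$ (the coefficient $\tfrac{1}{w}(F^*)-\tfrac{1}{w}(F)$ being positive); and for $\tfrac{1}{w}(F)>\tfrac{1}{w}(F^*)$ to an upper bound on $\Delta$. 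It therefore suffices to choose $\Delta$ between the largest of the lower bounds (over small $F$ and all $j$) and the smallest of the upper bounds (over large $F$ and all $j$); such a $\Delta$ exists iff every lower bound is at most every upper bound, which --- clearing denominators exactly as in the lower-bound computation --- amounts to $\delta^*\ge\omega_2$, and if one of the two families is empty only one side constrains $\Delta$ and any value meeting it works. For this $\Delta$ the vector $\p{\delta^*,\Delta}{}{w}$ is feasible of weighted span $\delta^*$, so the minimum is at most $\delta^*$, and together with the lower bound this proves the identity.

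I expect the main obstacle to be the $\omega_2$ term: reconciling the constraints contributed by feasible sets strictly smaller than $F^*$ with those contributed by sets strictly larger than $F^*$. The delicate part is bookkeeping --- the size difference $\tfrac{1}{w}(F^*)-\tfrac{1}{w}(F''')$ is negative for a large $F'''$, so several inequalities flip; every denominator that occurs has to be checked to be strictly positive, the decisive point being $\tfrac{1}{w}(F^*\setminus F')>0$ for a small $F'$ (and, as the same argument applied with $\tfrac{1}{w}(F')=\tfrac{1}{w}(F^*)$, also $\tfrac{1}{w}(F^*\setminus F'')>0$ for an equal-size $F''\ne F^*$); and one must verify that ``every lower bound on the free parameter is at most every upper bound'' becomes, after clearing denominators, exactly the family of fractions defining $\omega_2$. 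Carrying out this single reduction carefully is what forces the lower and upper bounds to coincide, so it pays to isolate it and invoke it in both directions.
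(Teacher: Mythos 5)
Your proposal is correct and follows essentially the same route as the paper. The only cosmetic difference is in the lower-bound direction: you derive $\spa_w(p)\ge\max\{0,\omega_1,\omega_2\}$ directly for an arbitrary feasible $p$ via the envelope $m\cdot\tfrac{1}{w}(X)\le p(X)\le M\cdot\tfrac{1}{w}(X)$, whereas the paper first invokes Corollary~\ref{cor:inv_min_cost_span_opt_dev_vector} to pass to an optimal deviation vector of special form $\p{\delta,\Delta}{}{w}$ and then reads off the same three families of inequalities (equal-size, small, and large $F$) from feasibility of that vector; the resulting algebra, including the sign checks on $\tfrac1w(F^*)-\tfrac1w(F)$ and the clearing of denominators that produces the $\omega_2$ fraction, is identical. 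For the upper bound you, like the paper, construct $\p{\delta^*,\Delta}{}{w}$ and argue that a valid $\Delta$ exists precisely because $\delta^*\ge\omega_2$ makes every small-set lower bound sit below every large-set upper bound; the paper simply names a specific such $\Delta$ (the maximal lower bound, or failing that the minimal upper bound, or $0$). Your explicit treatment of the degenerate cases $F^*\in\{\emptyset,S\}$ and of empty index families via the $\max\emptyset=-\infty$ convention is a small but welcome tightening that the paper leaves implicit.
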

\begin{proof}
Let $p$ be an optimal deviation vector. By Corollary~\ref{cor:inv_min_cost_span_opt_dev_vector}, we may assume that $p$ is of the form $\p{\delta, \Delta}{}{w}$ for some $\delta,\Delta\in\mathbb{R}$ such that $\min \{ w(s) \cdot p(s) \mid s \in S \} = \Delta$ and $\max \{ w(s) \cdot p(s) \mid s \in S \} = \delta + \Delta$. Note that $\delta=\max \{ w(s) \cdot p(s) \mid s \in S \} - \min \{ w(s) \cdot p(s) \mid s \in S \} \ge 0$ clearly holds. For ease of discussion, let $d \coloneqq \max \{ 0,\, \omega_1,\, \omega_2 \}$. Using this notation, we define
\begin{equation*}
D \coloneqq 
\begin{cases}
\displaystyle\max_{\substack{j\in[k] \\ F' \in \mathcal{F}\\ \frac{1}{w}(F') < \frac{1}{w}(F^*)}} \left\{ \displaystyle \frac{c^j(F^*) - c^j(F') - d \cdot \tfrac{1}{w}(F^*\setminus  F')}{\tfrac{1}{w}(F^*) - \tfrac{1}{w}(F')} \right\} & \text{if $\{ F' \in \mathcal{F} \mid \tfrac{1}{w}(F') < \tfrac{1}{w}(F^*) \} \ne \emptyset$},\\[2pt]
\displaystyle\min_{\substack{j\in[k] \\ F''' \in \mathcal{F}\\ \frac{1}{w}(F''') > \frac{1}{w}(F^*)}} \left\{ \displaystyle \frac{c^j(F^*) - c^j(F''') - d \cdot \tfrac{1}{w}(F^*\setminus  F''')}{\tfrac{1}{w}(F^*) - \tfrac{1}{w}(F''')} \right\} & \text{if $\{ F' \in \mathcal{F} \mid \tfrac{1}{w}(F') < \tfrac{1}{w}(F^*) \} = \emptyset$,}\\[-20pt]
& \text{$\{F''' \in\mathcal{F} \mid \tfrac{1}{w}(F''')>\tfrac{1}{w}(F^*) \} \neq \emptyset$},\\[10pt]
0 & \text{otherwise}.
\end{cases}
\end{equation*} 
Let $j\in[k]$ and $F \in \mathcal{F}\setminus\{F^*\}$ be an arbitrary solution. Since $\p{\delta, \Delta}{}{w}$ is feasible, we have
\begin{align*}
0 
{}&{}\ge 
(c^j - \p{\delta, \Delta}{}{w})(F^*) - (c^j - \p{\delta, \Delta}{}{w})(F) \\[2pt]
{}&{}= 
\left( c^j(F^*) - \delta \cdot \tfrac{1}{w}(F^*) - \Delta \cdot \tfrac{1}{w}(F^*) \right) - \left( c^j(F^*) - \delta \cdot \tfrac{1}{w}(F \cap F^*) - \Delta \cdot \tfrac{1}{w}(F) \right) \\[2pt]
{}&{}= 
c^j(F^*) - c^j(F) - \delta \cdot \tfrac{1}{w}(F^*\setminus  F) - \Delta \cdot \left( \tfrac{1}{w}(F^*) -\tfrac{1}{w}(F) \right).
\end{align*}
Thus for any $j\in[k]$ and $F'' \in \mathcal{F}$ with $F'' \ne F^*$ and $\tfrac{1}{w}(F'') = \tfrac{1}{w}(F^*)$, if such $F''$ exists,
\begin{equation*}
\delta \ge \frac{c^j(F^*) - c^j(F'')}{\tfrac{1}{w}(F^*\setminus  F'')},
\end{equation*}
for any $j\in[k]$ and $F' \in \mathcal{F}$ with $\tfrac{1}{w}(F') < \tfrac{1}{w}(F^*)$, if such $F'$ exists,
\begin{equation*}
\Delta \ge \frac{c^j(F^*) - c^j(F') - \delta \cdot \tfrac{1}{w}(F^*\setminus  F')}{\tfrac{1}{w}(F^*) - \tfrac{1}{w}(F')},
\end{equation*}
and for any $j\in[k]$ and $F''' \in \mathcal{F}$ with $\tfrac{1}{w}(F''') > \tfrac{1}{w}(F^*)$, if such $F'''$ exists,
\begin{equation*}
\Delta \le \frac{c^j(F^*) - c^j(F''') - \delta \cdot \tfrac{1}{w}(F^*\setminus  F''')}{\tfrac{1}{w}(F^*) - \tfrac{1}{w}(F''')}.
\end{equation*}
By the above, for any $j_1,j_2\in[k]$ and $F', F''' \in \mathcal{F}$ with $\tfrac{1}{w}(F') < \tfrac{1}{w}(F^*) < \tfrac{1}{w}(F''')$, if such $F'$ and $F'''$ exist, we have
\begin{equation*}
\frac{c^{j_1}(F^*) - c^{j_1}(F') - \delta \cdot \tfrac{1}{w}(F^*\setminus  F')}{\tfrac{1}{w}(F^*) - \tfrac{1}{w}(F')} \le \frac{c^{j_2}(F^*) - c^{j_2}(F''') - \delta \cdot \tfrac{1}{w}(F^*\setminus  F''')}{\tfrac{1}{w}(F^*) - \tfrac{1}{w}(F''')},
\end{equation*}
implying
\begin{equation*}
\delta \ge \frac{\displaystyle \frac{c^{j_1}(F^*) - c^{j_1}(F')}{\tfrac{1}{w}(F^*) - \tfrac{1}{w}(F')} - \frac{c^{j_2}(F^*) - c^{j_2}(F''')}{\tfrac{1}{w}(F^*) - \tfrac{1}{w}(F''')}}{\displaystyle \frac{\tfrac{1}{w}(F^*\setminus  F')}{\tfrac{1}{w}(F^*) - \tfrac{1}{w}(F')} - \frac{\tfrac{1}{w}(F^*\setminus  F''')}{\tfrac{1}{w}(F^*) - \tfrac{1}{w}(F''')}}.
\end{equation*}
Therefore $\delta \ge d$ holds. To prove $\delta \le d$, it suffices to show that $\p{d, D}{}{w}$ is a feasible deviation vector, that is, $F^*$ has minimum cost with respect to $c^j-\p{d, D}{}{w}$ for every $j\in[k]$. For any $j\in[k]$ and $F' \in \mathcal{F}$ with $\tfrac{1}{w}(F') < \tfrac{1}{w}(F^*)$, if such $F'$ exists,
\begin{align*}
&(c^j - \p{d, D}{}{w})(F^*) - (c^j - \p{d, D}{}{w})(F') \\[2pt]
{}&{}~~= 
c^j(F^*) - c^j(F') - d \cdot \tfrac{1}{w}(F^*\setminus  F') - D \cdot \left( \tfrac{1}{w}(F^*) - \tfrac{1}{w}(F') \right) \\[2pt]
{}&{}~~\le 
c^j(F^*) - c^j(F') - d \cdot \tfrac{1}{w}(F^*\setminus  F') - \frac{c^j(F^*) - c^j(F') - d \cdot \tfrac{1}{w}(F^*\setminus  F')}{\tfrac{1}{w}(F^*) - \tfrac{1}{w}(F')} \cdot \left( \tfrac{1}{w}(F^*) - \tfrac{1}{w}(F') \right)\\[2pt]
{}&{}~~= 
0.
\end{align*}
For any $j\in[k]$ and $F'' \in \mathcal{F}$ with $F'' \ne F$ and $\tfrac{1}{w}(F'') = \tfrac{1}{w}(F^*)$, if such $F''$ exists,
\begin{align*}
&(c^j - \p{d, D}{}{w})(F^*) - (c^j - \p{d, D}{}{w})(F'') \\[2pt]
{}&{}~~= 
c^j(F^*) - c^j(F'') - d \cdot \tfrac{1}{w}(F^*\setminus  F'') - D \cdot \left( \tfrac{1}{w}(F^*) - \tfrac{1}{w}(F'') \right) \\[2pt]
{}&{}~~= 
c^j(F^*) - c^j(F'') - d \cdot \tfrac{1}{w}(F^*\setminus  F'') - D \cdot 0 \\[2pt]
{}&{}~~\le 
c^j(F^*) - c^j(F'') - \frac{c^j(F^*) - c^j(F'')}{\tfrac{1}{w}(F^*\setminus  F'')} \cdot \tfrac{1}{w}(F^*\setminus  F'')\\[2pt]
{}&{}~~= 
0.
\end{align*}
Let $j\in[k]$ and $F''' \in \mathcal{F}$ with $\tfrac{1}{w}(F''') > \tfrac{1}{w}(F^*)$ be arbitrary, if such $F'''$ exists. First note that
\begin{equation*}
D \le \frac{c^j(F^*) - c^j(F''') - d \cdot \tfrac{1}{w}(F^*\setminus  F''')}{\tfrac{1}{w}(F^*) - \tfrac{1}{w}(F''')}
\end{equation*}
holds since otherwise there would exist $F' \in \mathcal{F}$ with $\tfrac{1}{w}(F') < \tfrac{1}{w}(F^*)$ such that
\begin{equation*}
\frac{c^j(F^*) - c^j(F') - d \cdot \tfrac{1}{w}(F^*\setminus  F')}{\tfrac{1}{w}(F^*) - \tfrac{1}{w}(F')} > \frac{c^j(F^*) - c^j(F''') - d \cdot \tfrac{1}{w}(F^*\setminus  F''')}{\tfrac{1}{w}(F^*) - \tfrac{1}{w}(F''')},
\end{equation*}
contradicting the definition of $d$. Thus we get
\begin{align*}
&(c^j - \p{d, D}{}{w})(F^*) - (c^j - \p{d, D}{}{w})(F''') \\[2pt]
{}&{}~~= c^j(F^*) - c^j(F''') - d \cdot \tfrac{1}{w}(F^*\setminus  F''') - D \cdot \left( \tfrac{1}{w}(F^*) - \tfrac{1}{w}(F''') \right) \\[2pt]
{}&{}~~\le c^j(F^*) - c^j(F''') - d \cdot \tfrac{1}{w}(F^*\setminus  F''')\\[2pt]
{}&{}~~~~- \frac{c^j(F^*) - c^j(F''') - d \cdot \tfrac{1}{w}(F^*\setminus  F''')}{\tfrac{1}{w}(F^*) - \tfrac{1}{w}(F''')} \cdot \left( \tfrac{1}{w}(F^*) - \tfrac{1}{w}(F''') \right) \\[2pt]
{}&{}~~= 
0.
\end{align*}
This shows that $\p{d, D}{}{w}$ is indeed feasible, concluding the proof of the theorem.
\end{proof}

In~\cite{berczi2023infty}, the authors showed that the algorithm for the weighted $\ei$-norm objective naturally extends to the case of multiple cost functions. Roughly, this is doable since it suffices to compute an optimal deviation vector (of a special form) for each cost function separately, and one of these vectors can be shown to be optimal for the multiple costs version as well. 

Unfortunately, a similar approach does not apply for the weighted span objective, see Figure~\ref{fig:multiple} for an example. To overcome this, instead of considering the cost functions separately, one can run Algorithm~\ref{algo:inv_min_cost_span} for them simultaneously. That is, we still keep track of the small and large bad sets that were found the latest, but now together with the cost function $c^j$ for which those were bad. In each iteration, we pick an index $j$ such that $F^*$ is not optimal with respect to the $j$th modified cost function, and update the deviation vector according to the same rules that were applied to the case of a single cost function. If the number of cost functions is $k$, then this adds an additional factor of $k$ to the running time of the algorithm, that is, it makes $O(k\cdot n^2 \cdot \|w\|_{\text{-}1}^6)$ calls to oracle $\cO$.

\begin{figure}[H]
 \centering

 \begin{tikzpicture}[scale=0.85]
 \tikzstyle{vertex}=[draw,circle,fill=black,minimum size=5,inner sep=0]
 
 \node at (3.5,1.5) {\footnotesize $\begin{array}{r@{}c@{}l} \lin &{} = {}& -\infty, \\ \uin &{} = {}& +\infty, \end{array}$};
 \node at (6.5,1.5) {\footnotesize $\begin{array}{r@{}c@{}l} \lout &{} = {}& -\infty, \\ \uout &{} = {}& +\infty \end{array}$};
 
 \node at (1,0.5) {\footnotesize $F^*$};
 \draw (2,0) ellipse (1.2 and 0.5);
 \node[vertex] at (1.3,0) {};
 \node[vertex] at (2.7,0) {};

 \draw (4.5,0) ellipse (0.5 and 0.5);
 \node[vertex] at (4.5,0) {};
 
 \draw (7.5,0) ellipse (1.7 and 0.5);
 \node[vertex] at (6.3,0) {};
 \node[vertex] at (7.5,0) {};
 \node[vertex] at (8.7,0) {};
 
 \node at (1.3,-1.5) {\footnotesize $\begin{array}{c} (1,1) \\ \downarrow \\ (0,0) \end{array}$};
 \node at (2.7,-1.5) {\footnotesize $\begin{array}{c} (1,1) \\ \downarrow \\ (0,0) \end{array}$};
 \node at (4.5,-1.5) {\footnotesize $\begin{array}{c} (0,5) \\ \downarrow \\ (0,5) \end{array}$};
 \node at (6.3,-1.5) {\footnotesize $\begin{array}{c} (2,0) \\ \downarrow \\ (2,0) \end{array}$};
 \node at (7.5,-1.5) {\footnotesize $\begin{array}{c} (2,0) \\ \downarrow \\ (2,0) \end{array}$};
 \node at (8.7,-1.5) {\footnotesize $\begin{array}{c} (2,0) \\ \downarrow \\ (2,0) \end{array}$};
 \end{tikzpicture}
 \caption{Example for two cost functions, where the original costs of an element $s\in S$ are presented as $(c^1(s),c^2(s))$ in the top row, and the sets denote the members of $\cF$. The optimal deviation vectors for $c^1$ and for $c^2$ are defined by $\delta^1 = 0,\ \Delta^1 = 2$ and by $\delta^2 = 0,\ \Delta^2 = -2$, respectively. However, when both cost functions are considered, the optimal deviation vector corresponds to $\delta = 1,\ \Delta = 0$, resulting in the modified costs shown in the bottom row.} \label{fig:multiple}
\end{figure}

\section{Conclusions}
\label{sec:conclusions}

In this paper, we introduced an objective for minimum-cost inverse optimization problems that measures the difference between the largest and the smallest weighted coordinates of the deviation vector, thus leading to a fair or balanced solution. We presented a purely combinatorial algorithm that efficiently determines an optimal deviation vector, assuming that an oracle for solving the underlying optimization problem is available. The running time of the algorithm is pseudo-polynomial due to the presence of weights, and finding a strongly polynomial algorithm remains an intriguing open problem.

\paragraph{Acknowledgement.} The work was supported by the Lend\"ulet Programme of the Hungarian Academy of Sciences -- grant number LP2021-1/2021 and by the Hungarian National Research, Development and Innovation Office -- NKFIH, grant number FK128673.

\bibliographystyle{abbrv}
\bibliography{part2}


 \clearpage

\appendix
\section*{Appendix}
\label{sec:appendix}

\section{Deferred proofs}
\label{sec:appendixa}

\subsection{Proof of Lemma~\ref{lemma:inv_min_cost_span_correction}}

\begin{proof}[Proof of Lemma~\ref{lemma:inv_min_cost_span_correction}]
Assume that $F^*$ is not a minimum $c_i$-cost member of $\mathcal{F}$ and that Algorithm~\ref{algo:inv_min_cost_span} does not declare the problem to be infeasible in the $i$th step. We distinguish the different cases the $i$th step might belong to.
\medskip

\noindent \textbf{Cases~\hyperlink{1.1}{1.1} and~\hyperlink{1.2.1}{1.2.1}.} Since
\begin{equation*}
\delta_{i+1} = \frac{c_i(F^*) - c_i(F_i)}{\mu(F^* \setminus F_i)},
\end{equation*}
we have
\begin{align*}
&c_{i+1}(F^*) - c_{i+1}(F_i) \\[2pt]
{}&{}~~= 
c_i(F^*) - c_i(F_i) - \delta_{i+1} \cdot \mu(F^* \setminus F_i) - \Delta_{i+1} \cdot \big( \mu(F^*) - \mu(F_i) \big) \\[2pt]
{}&{}~~= 
c_i(F^*) - c_i(F_i) - \delta_{i+1} \cdot \mu(F^* \setminus F_i) - \Delta_{i+1} \cdot 0 \\[2pt]
{}&{}~~=
0.
\end{align*}

\noindent \textbf{Cases~\hyperlink{2.1.1}{2.1.1}, \hyperlink{2.2.1}{2.2.1}, \hyperlink{3.1.1}{3.1.1}, \hyperlink{3.2.1}{3.2.1}, \hyperlink{4.1.1}{4.1.1}, \hyperlink{4.1.2.1}{4.1.2.1}, \hyperlink{4.2.2.1}{4.2.2.1}, \hyperlink{5.1.1}{5.1.1}, \hyperlink{5.1.2.1}{5.1.2.1}, and~\hyperlink{5.2.2.1}{5.2.2.1}.} In Cases~\hyperlink{2.1.1}{2.1.1}, \hyperlink{3.1.1}{3.1.1}, \hyperlink{4.1.1}{4.1.1}, and~\hyperlink{5.1.1}{5.1.1}, we have
\begin{equation*}
\Delta_{i+1} = \frac{c_i(F^*) - c_i(F_i) - \delta_{i+1} \cdot \mu(F^* \setminus F_i)}{\mu(F^*) - \mu(F_i)},
\end{equation*}
while in Cases~\hyperlink{2.2.1}{2.2.1}, \hyperlink{3.2.1}{3.2.1}, \hyperlink{4.1.2.1}{4.1.2.1}, \hyperlink{4.2.2.1}{4.2.2.1}, \hyperlink{5.1.2.1}{5.1.2.1}, and~\hyperlink{5.2.2.1}{5.2.2.1}, we have
\begin{equation*}
\delta_{i+1} = \frac{c_i(F^*) - c_i(F_i) - \Delta_{i+1} \cdot \big( \mu(F^*) - \mu(F_i) \big)}{\mu(F^* \setminus F_i)}.
\end{equation*}
Therefore in all these cases, we obtain
\begin{align*}
&c_{i+1}(F^*) - c_{i+1}(F_i) \\[2pt]
{}&{}~~= 
c_i(F^*) - c_i(F_i) - \delta_{i+1} \cdot \mu(F^* \setminus F_i) - \Delta_{i+1} \cdot \big( \mu(F^*) - \mu(F_i) \big)\\[2pt]
{}&{}~~= 
0 .
\end{align*}

\noindent \textbf{Cases~\hyperlink{2.1.2.1}{2.1.2.1}, \hyperlink{2.2.2.1}{2.2.2.1}, \hyperlink{3.1.2.1}{3.1.2.1}, and~\hyperlink{3.2.2.1}{3.2.2.1}.} Since $\delta_{i+1} = f_5(c_i, d_i, D_i, F_i)$ and $\Delta_{i+1} = f_4(c_i, d_i, D_i, F_i)$, we have
\begin{align*}
&c_{i+1}(F^*) - c_{i+1}(F_i) \\[2pt]
{}&{}~~= 
c_i(F^*) - c_i(F_i) - \delta_{i+1} \cdot \mu(F^* \setminus F_i) - \Delta_{i+1} \cdot \big( \mu(F^*) - \mu(F_i) \big) \\[2pt]
{}&{}~~=
c_i(F^*) - c_i(F_i) - f_5(c_i, d_i, D_i, F_i) \cdot \mu(F^* \setminus F_i)- f_4(c_i, d_i, D_i, F_i) \cdot \big( \mu(F^*) - \mu(F_i) \big) \\[2pt]
{}&{}~~= 
\big( c_i(F^*) - c_i(F_i) \big) \cdot \left( 1 - \frac{\mu(F^* \setminus F_i)}{\mu(F_i \setminus F^*)} + \frac{\mu(F^*) - \mu(F_i)}{\mu(F_i \setminus F^*)} \right)\\[2pt]
{}&{}~~= 
0 .
\end{align*}

\noindent \textbf{Cases~\hyperlink{4.2.1}{4.2.1} and~\hyperlink{5.2.1}{5.2.1}.} Since $\delta_{i+1} = f_{11}(c_i, d_i, D_i, F_i)$ and $\Delta_{i+1} = f_{10}(c_i, d_i, D_i, F_i)$, we have
\begin{align*}
&c_{i+1}(F^*) - c_{i+1}(F_i) \\[2pt]
{}&{}~~= 
c_i(F^*) - c_i(F_i) - \delta_{i+1} \cdot \mu(F^* \setminus F_i) - \Delta_{i+1} \cdot \big( \mu(F^*) - \mu(F_i) \big) \\[2pt]
{}&{}~~= 
c_i(F^*) - c_i(F_i)- f_{11}(c_i, d_i, D_i, F_i) \cdot \mu(F^* \setminus F_i)- f_{10}(c_i, d_i, D_i, F_i) \cdot \big( \mu(F^*) - \mu(F_i) \big) \\[2pt]
{}&{}~~= 
\big( c_i(F^*) - c_i(F_i) \big) \cdot \left( 1 - \frac{\mu(F^* \setminus F_i)}{\mu(F_i \setminus F^*)} + \frac{\mu(F^*) - \mu(F_i)}{\mu(F_i \setminus F^*)} \right)\\[2pt]
{}&{}~~= 
0 .
\end{align*}
This concludes the proof of the lemma.
\end{proof}

\subsection{Technical claims}

Before moving the the proofs of the other lemmas, we need the following technical claims.

\begin{claim} \label{claim:inv_min_cost_span_reformulating_Delta_iplusone}
If $\mu(F_i) < \mu(F^*)$ and $Z_i \ne \ast$, then 
\begin{align*}
f_8(c_i, F_i, Z_i) 
{}&{}= 
\frac{c_i(F^*) - c_i(F_i) - f_7(c_i, F_i, Z_i) \cdot \mu(F^* \setminus F_i)}{\mu(F^*) - \mu(F_i)} \\[2pt]
{}&{}= 
\frac{c_i(F^*) - c_i(Z_i) - f_7(c_i, F_i, Z_i) \cdot \mu(F^* \setminus Z_i)}{\mu(F^*) - \mu(Z_i)}.
\end{align*}
If $\mu(F_i) > \mu(F^*)$ and $X_i \ne \ast$, then 
\begin{align*}
f_{12}(c_i, X_i, F_i) 
{}&{}= 
\frac{c_i(F^*) - c_i(F_i) - f_7(c_i, X_i, F_i) \cdot \mu(F^* \setminus F_i)}{\mu(F^*) - \mu(F_i)} \\[2pt]
{}&{}= 
\frac{c_i(F^*) - c_i(X_i) - f_7(c_i, X_i, F_i) \cdot \mu(F^* \setminus X_i)}{\mu(F^*) - \mu(X_i)}.
\end{align*}
\end{claim}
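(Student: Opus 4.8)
The statement is a routine algebraic identity, so I would keep the argument short. Fix the iteration index $i$, suppose $\mu(F_i) < \mu(F^*)$ and $Z_i \ne \ast$, and write $\alpha \coloneqq f_7(c_i, F_i, Z_i)$ for brevity. First I would record the sign information that makes the fractions well defined: by inspection of Algorithm~\ref{algo:inv_min_cost_span}, the variable $Z$ is assigned a new value only in Cases~\hyperlink{4.1.1}{4} and~\hyperlink{5.1.1}{5}, where the triggering set $F_i$ satisfies $\mu(F_i) > \mu(F^*)$, and otherwise $Z$ is either set to $\ast$ or left unchanged; hence $Z_i \ne \ast$ implies $\mu(Z_i) > \mu(F^*)$. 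Consequently $\mu(F^*) - \mu(F_i) > 0$ and $\mu(F^*) - \mu(Z_i) < 0$, so all denominators below are nonzero. The first displayed equality is nothing but the definition of $f_8$.

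For the second displayed equality I would first check that the denominator of $f_7$ does not vanish. Using the identity $\mu(F^*) - \mu(F) = \mu(F^* \setminus F) - \mu(F \setminus F^*)$, which is immediate from $\mu(X) = \tfrac{1}{w}(X \setminus S_0)$, the hypothesis $\mu(F_i) < \mu(F^*)$ gives $\mu(F^* \setminus F_i) > \mu(F_i \setminus F^*) \ge 0$, whence $\tfrac{\mu(F^*\setminus F_i)}{\mu(F^*) - \mu(F_i)} \ge 1$; symmetrically $\mu(Z_i) > \mu(F^*)$ makes $\mu(F^*) - \mu(Z_i)$ negative with $\mu(F^*\setminus Z_i) \ge 0$, so $\tfrac{\mu(F^*\setminus Z_i)}{\mu(F^*) - \mu(Z_i)} \le 0$. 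Therefore the denominator $\tfrac{\mu(F^*\setminus F_i)}{\mu(F^*) - \mu(F_i)} - \tfrac{\mu(F^*\setminus Z_i)}{\mu(F^*) - \mu(Z_i)}$ of $f_7$ is at least $1$, in particular nonzero.

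Granting this, the defining equation of $\alpha = f_7(c_i,F_i,Z_i)$ can be cleared of its denominator and rearranged into
\begin{equation*}
\frac{c_i(F^*) - c_i(F_i) - \alpha \cdot \mu(F^* \setminus F_i)}{\mu(F^*) - \mu(F_i)} = \frac{c_i(F^*) - c_i(Z_i) - \alpha \cdot \mu(F^* \setminus Z_i)}{\mu(F^*) - \mu(Z_i)},
\end{equation*}
since subtracting $\alpha \cdot \tfrac{\mu(F^*\setminus F_i)}{\mu(F^*)-\mu(F_i)}$ from the first term in the numerator of $f_7$ and $\alpha \cdot \tfrac{\mu(F^*\setminus Z_i)}{\mu(F^*)-\mu(Z_i)}$ from the second is precisely the content of the definition of $f_7$. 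The left-hand side above equals $f_8(c_i, F_i, Z_i)$ by definition, which is the claim for $f_8$.

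Finally, the assertion for $f_{12}$ follows from the very same computation after interchanging the roles of the small and the large set. Inspecting the algorithm shows that $X$ is assigned a new value only in Cases~\hyperlink{2.1.1}{2} and~\hyperlink{3.1.1}{3}, where $\mu(F_i) < \mu(F^*)$, so $X_i \ne \ast$ forces $\mu(X_i) < \mu(F^*)$; together with the hypothesis $\mu(F_i) > \mu(F^*)$ this puts us in the situation already treated, with $X_i$ in the role of $F_i$ and $F_i$ in the role of $Z_i$. The only step requiring genuine care is the nonvanishing of the denominator of $f_7$, handled in the second paragraph; everything else is bookkeeping.
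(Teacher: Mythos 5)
Your proof is correct and follows exactly the route the paper takes: the paper dispatches this claim with a single sentence saying it follows directly from the definitions of $f_7$, $f_8$, and $f_{12}$, and your argument simply fills in the bookkeeping, namely that clearing the $f_7$ denominator and rearranging yields the claimed equality, plus the extra care (which the paper omits) of checking that all relevant denominators are nonzero via the identity $\mu(F^*)-\mu(F) = \mu(F^*\setminus F) - \mu(F\setminus F^*)$ and the sign facts $\mu(Z_i)>\mu(F^*)$ and $\mu(X_i)<\mu(F^*)$ inherited from how $Z$ and $X$ are assigned in the algorithm.
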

\begin{proof}
The statement directly follows from the definitions of the functions $f_7$, $f_8$, and $f_{12}$. 
\end{proof}

\begin{claim} \label{claim:inv_min_cost_span_correction_for_a_pair}
\mbox{}
\begin{enumerate}[label=(\alph*)]\itemsep0em
\item \label{v1:a} If $\mu(F_i) < \mu(F^*)$ and $Z_i, Z_{i+1} \ne \ast$, then either $c_{i+1}(F^*) = c_{i+1}(Z_{i+1})$ or Algorithm~\ref{algo:inv_min_cost_span} declares the problem to be infeasible. 
\item \label{v1:b} If $\mu(F_i) > \mu(F^*)$ and $X_i, X_{i+1} \ne \ast$, then either $c_{i+1}(F^*) = c_{i+1}(X_{i+1})$ or Algorithm~\ref{algo:inv_min_cost_span} declares the problem to be infeasible.
\end{enumerate}
\end{claim}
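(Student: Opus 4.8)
The plan is to mirror the structure of Lemma~\ref{lemma:inv_min_cost_span_correction}: just as that lemma shows $c_{i+1}(F^*)=c_{i+1}(F_i)$ after eliminating the bad set $F_i$, here I must show that the companion set kept in memory, $Z_{i+1}$ (in part~\ref{v1:a}) or $X_{i+1}$ (in part~\ref{v1:b}), is \emph{also} made cost-equal to $F^*$ by the same update, provided the algorithm does not abort. The first observation is that the hypotheses $Z_i,Z_{i+1}\ne\ast$ in~\ref{v1:a} pin down exactly which cases can occur in step $i$: since $Z_{i+1}$ survives as a non-$\ast$ large set, step $i$ must be one of the cases in which the code writes $Z_{i+1}\gets Z_i$ with $Z_i\ne\ast$, namely Cases~\hyperlink{3.1.1}{3.1.1}, \hyperlink{3.1.2.1}{3.1.2.1}, \hyperlink{3.2.1}{3.2.1}, and \hyperlink{3.2.2.1}{3.2.2.1} (recall that in Cases 3.1.2 and 3.2.2 the algorithm sets $Z_{i+1}\gets\ast$ in the ``Infeasible'' branch but also in the 2.x.2.1-style branch it sets $Z_{i+1}\gets\ast$; I will need to check the code carefully to see that the only surviving possibility with $Z_{i+1}\ne\ast$ is Case~3.1.1 together with the $f_4/f_5$ sub-cases where $Z_{i+1}\gets\ast$ is actually written — so effectively the relevant case is \hyperlink{3.1.1}{3.1.1}). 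Dually, in~\ref{v1:b} the relevant case is \hyperlink{5.1.1}{5.1.1}. So the proof reduces to a single genuine computation in each part, plus a bookkeeping argument that no other case is compatible with the hypotheses.

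For the main computation in part~\ref{v1:a}, assume step $i$ is Case~\hyperlink{3.1.1}{3.1.1}, so that $Z_{i+1}=Z_i$, $\delta_{i+1}=f_7(c_i,F_i,Z_i)$ and $\Delta_{i+1}=f_8(c_i,F_i,Z_i)$. Then
\begin{align*}
c_{i+1}(F^*)-c_{i+1}(Z_{i+1})
&= c_i(F^*)-c_i(Z_i)-\delta_{i+1}\cdot\mu(F^*\setminus Z_i)-\Delta_{i+1}\cdot\bigl(\mu(F^*)-\mu(Z_i)\bigr)\\
&= c_i(F^*)-c_i(Z_i)-f_7(c_i,F_i,Z_i)\cdot\mu(F^*\setminus Z_i)\\
&\quad{}-f_8(c_i,F_i,Z_i)\cdot\bigl(\mu(F^*)-\mu(Z_i)\bigr)\\
&= 0,
\end{align*}
where the last equality is exactly the second displayed identity of Claim~\ref{claim:inv_min_cost_span_reformulating_Delta_iplusone}, namely that $f_8(c_i,F_i,Z_i)=\dfrac{c_i(F^*)-c_i(Z_i)-f_7(c_i,F_i,Z_i)\cdot\mu(F^*\setminus Z_i)}{\mu(F^*)-\mu(Z_i)}$. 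Part~\ref{v1:b} is identical with the roles of $F_i$ and $X_i$ swapped and $f_8,f_7$ replaced by $f_{12},f_7$, again invoking the second identity of Claim~\ref{claim:inv_min_cost_span_reformulating_Delta_iplusone} for $f_{12}$. The infeasibility alternative in the statement absorbs every branch of the code where an ``Infeasible'' is returned, so nothing further is needed there.

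The main obstacle I expect is not the algebra — which is immediate from Claim~\ref{claim:inv_min_cost_span_reformulating_Delta_iplusone} — but the \emph{case-enumeration bookkeeping}: one must argue from the structure of Algorithm~\ref{algo:inv_min_cost_span} that whenever $\mu(F_i)<\mu(F^*)$ and both $Z_i\ne\ast$ and $Z_{i+1}\ne\ast$, step $i$ necessarily falls into the ``Case~3'' family (indeed into \hyperlink{3.1.1}{3.1.1}), and that in every ``Case~3'' branch where $Z_{i+1}\ne\ast$ is written we actually have $Z_{i+1}=Z_i$ with the update given by $f_7,f_8$. This requires tracing which branches assign $Z_{i+1}\gets\ast$ versus $Z_{i+1}\gets Z_i$, and noting that $\mu(F_i)<\mu(F^*)$ rules out the Case~1, Case~4 and Case~5 blocks entirely, while $Z_i\ne\ast$ rules out the Case~2 block (which is guarded by $Z_i=\ast$). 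Once this reduction is in place, the computation above finishes the proof; I would present it by first stating the reduction, then giving the one-line computation for each of the two parts.
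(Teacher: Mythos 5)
Your proposal is correct and matches the paper's own proof: in each part you reduce to the single case (3.1.1, resp.\ 5.1.1) where the companion set is preserved, and then apply the second identity of Claim~\ref{claim:inv_min_cost_span_reformulating_Delta_iplusone} to show that the update zeroes out the cost gap to $Z_{i+1}$ (resp.\ $X_{i+1}$). One minor remark: your first pass at the case enumeration (listing 3.1.1, 3.1.2.1, 3.2.1, 3.2.2.1 as branches where $Z_{i+1}\gets Z_i$ is written) is wrong --- in all of 3.1.2.x and 3.2.x the code explicitly sets $Z_{i+1}\gets\ast$, so only 3.1.1 keeps $Z_{i+1}=Z_i$; you catch this yourself in the next sentence, and the final argument is sound, but a cleaned-up write-up should state the reduction to 3.1.1 (and 5.1.1) directly.
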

\begin{proof}
Assume that Algorithm~\ref{algo:inv_min_cost_span} does not declare the problem to be infeasible in the $i$th step. First, consider the case when $\mu(F_i) < \mu(F^*)$ and $Z_i, Z_{i+1} \ne \ast$. Then the $i$th step corresponds to Case~\hyperlink{3.1.1}{3.1.1}, and $Z_i = Z_{i+1}$. Thus, by Claim~\ref{claim:inv_min_cost_span_reformulating_Delta_iplusone}, we have
\begin{align*}
&c_{i+1}(F^*) - c_{i+1}(Z_{i+1}) \\[2pt]
{}&{}~~= 
c_i(F^*) - c_i(Z_i) - f_7(c_i, F_i, Z_i) \cdot \mu(F^* \setminus Z_i)- f_8(c_i, F_i, Z_i) \cdot \big( \mu(F^*) - \mu(Z_i) \big) \\[2pt]
{}&{}~~= 
0 .
\end{align*}

Now consider the case when $\mu(F_i) > \mu(F^*)$ and $X_i, X_{i+1} \ne \ast$. Then in the $i$th step corresponds to Case~\hyperlink{5.1.1}{5.1.1}, and $X_i = X_{i+1}$. Thus, by Claim~\ref{claim:inv_min_cost_span_reformulating_Delta_iplusone}, we have
\begin{align*}
&c_{i+1}(F^*) - c_{i+1}(X_{i+1}) \\[2pt]
{}&{}~~= 
c_i(F^*) - c_i(X_i) - f_7(c_i, X_i, F_i) \cdot \mu(F^* \setminus X_i)- f_{12}(c_i, X_i, F_i) \cdot \big( \mu(F^*) - \mu(X_i) \big) \\[2pt]
{}&{}~~= 
0 .
\end{align*}
This concludes the proof of the claim.
\end{proof}

\begin{claim} \label{claim:inv_min_cost_span_correction_for_a_pair_v2}
\mbox{}
\begin{enumerate}[label=(\alph*)]\itemsep0em
\item \label{v2:a} If $Z_i \ne \ast$, then either $c_i(F^*) = c_i(Z_i)$ or Algorithm~\ref{algo:inv_min_cost_span} declares the problem to be infeasible.
\item \label{v2:b} If $X_i \ne \ast$, then either $c_i(F^*) = c_i(X_i)$ or Algorithm~\ref{algo:inv_min_cost_span} declares the problem to be infeasible.
\end{enumerate}
\end{claim}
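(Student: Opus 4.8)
The plan is a direct case inspection of Algorithm~\ref{algo:inv_min_cost_span}; no induction is needed. For $i=0$ we have $Z_0=X_0=\ast$ and there is nothing to prove, so assume $i\ge 1$ and that the algorithm does not declare the problem to be infeasible (otherwise the second disjunct of the claim already holds and we are done). Then iteration $i-1$ of the while loop was executed, so $c_{i-1}(F^*)>c_{i-1}(F_{i-1})$, i.e.\ $F^*$ is not a minimum $c_{i-1}$-cost member of $\mathcal{F}$; moreover, $Z_i$ and $X_i$ received their current values during iteration $i-1$.

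First I would establish the structural fact that, in Algorithm~\ref{algo:inv_min_cost_span}, the variable $Z_{i+1}$ is set to a value other than $\ast$ only through one of two assignments: either $Z_{i+1}\gets F_i$, which appears exactly in Cases~4 and~5 (so that $\mu(F_i)>\mu(F^*)$), or $Z_{i+1}\gets Z_i$ with $Z_i\ne\ast$, which appears only in \hyperlink{3.1.1}{Case~3.1.1} (so that $\mu(F_i)<\mu(F^*)$). Applying this at iteration $i-1$: either $Z_i=F_{i-1}$ with $\mu(F_{i-1})>\mu(F^*)$, or $Z_i=Z_{i-1}\ne\ast$ with iteration $i-1$ being \hyperlink{3.1.1}{Case~3.1.1} and $\mu(F_{i-1})<\mu(F^*)$. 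In the first case, Lemma~\ref{lemma:inv_min_cost_span_correction} applied to iteration $i-1$ gives $c_i(F^*)=c_i(F_{i-1})=c_i(Z_i)$ (or the algorithm declares infeasibility). In the second case, the hypotheses $\mu(F_{i-1})<\mu(F^*)$ and $Z_{i-1},Z_i\ne\ast$ are all met, so Claim~\ref{claim:inv_min_cost_span_correction_for_a_pair}\ref{v1:a} applied to iteration $i-1$ gives $c_i(F^*)=c_i(Z_i)$ (or the algorithm declares infeasibility). This proves part~\ref{v2:a}.

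Part~\ref{v2:b} follows symmetrically with the roles of small and large bad sets exchanged: $X_{i+1}$ receives a value other than $\ast$ only via $X_{i+1}\gets F_i$ (appearing in Cases~2 and~3, so $\mu(F_i)<\mu(F^*)$) or $X_{i+1}\gets X_i$ with $X_i\ne\ast$ (appearing only in \hyperlink{5.1.1}{Case~5.1.1}, so $\mu(F_i)>\mu(F^*)$). In the former situation Lemma~\ref{lemma:inv_min_cost_span_correction} yields $c_i(F^*)=c_i(F_{i-1})=c_i(X_i)$, and in the latter Claim~\ref{claim:inv_min_cost_span_correction_for_a_pair}\ref{v1:b} yields $c_i(F^*)=c_i(X_i)$ (or infeasibility is declared).

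The only genuine obstacle I anticipate is the bookkeeping behind the structural fact: one must go through every branch of the long, deeply nested pseudocode to verify that the two listed assignments are indeed the only ways $Z_{i+1}$ (respectively $X_{i+1}$) can become different from $\ast$, and to read off the sign of $\mu(F_i)-\mu(F^*)$ in each relevant case. Once that routine verification is in place, the statement drops out immediately from Lemma~\ref{lemma:inv_min_cost_span_correction} and Claim~\ref{claim:inv_min_cost_span_correction_for_a_pair}.
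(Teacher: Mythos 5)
Your proof is correct and follows essentially the same argument as the paper's: both examine iteration $i-1$, note that $Z_i\ne\ast$ forces $\mu(F_{i-1})\ne\mu(F^*)$, and then split according to the sign of $\mu(F_{i-1})-\mu(F^*)$, invoking Lemma~\ref{lemma:inv_min_cost_span_correction} in one branch (where $Z_i=F_{i-1}$) and Claim~\ref{claim:inv_min_cost_span_correction_for_a_pair} in the other (where $Z_i=Z_{i-1}$ via Case~\hyperlink{3.1.1}{3.1.1}). Your version states the structural bookkeeping a bit more explicitly and handles $i=0$ separately, but the route is the same.
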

 \begin{proof}
Assume that Algorithm~\ref{algo:inv_min_cost_span} does not declare the problem to be infeasible in the $i$th step. First, consider the case when $Z_i \ne \ast$. Then in the $(i-1)$th step we were not in Cases~\hyperlink{1.1}{1.1}, \hyperlink{1.2.1}{1.2.1}, or~\hyperlink{1.2.2}{1.2.2}, thus $\mu(F_{i-1}) \ne \mu(F^*)$. If $\mu(F_{i-1}) > \mu(F^*)$, then $Z_i = F_{i-1}$ and the statement follows from Lemma~\ref{lemma:inv_min_cost_span_correction}. If $\mu(F_{i-1}) < \mu(F^*)$, then $Z_i = Z_{i-1}$ and the statement follows from Claim~\ref{claim:inv_min_cost_span_correction_for_a_pair}.

The case when $X_i \ne \ast$ can be proved analogously.
\end{proof}

\begin{claim} \label{claim:inv_min_cost_span_nonneg_of_delta}
If $F^*$ is not a minimum $c_i$-cost member of $\mathcal{F}$, then either $\delta_{i+1} \ge 0$ and equality holds if and only if at the $i$th step belongs to Case~\hyperlink{2.1.1}{2.1.1} or~\hyperlink{4.1.1}{4.1.1}, or Algorithm~\ref{algo:inv_min_cost_span} declares the problem to be infeasible. Furthermore, $\Delta_{i+1} > 0$ if the $i$th step belongs to Case~\hyperlink{2.1.1}{2.1.1}, and $\Delta_{i+1} < 0$ if the $i$th step belongs to Case~\hyperlink{4.1.1}{4.1.1}. In addition, $\delta_0 \ge 0$. 
\end{claim}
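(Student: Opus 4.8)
The plan is to check the statement branch by branch, following Algorithm~\ref{algo:inv_min_cost_span}. The assertion $\delta_0=\max\{\lin-\uout,0\}\ge 0$ is immediate. For $\delta_{i+1}$, assume $c_i(F^*)>c_i(F_i)$ — which holds inside the while loop — and that the algorithm does not return \texttt{Infeasible} in step~$i$. The facts I would invoke repeatedly are: $c_i(F^*)-c_i(F_i)>0$; $\mu(F^*\setminus F_i)\ge 0$ and $\mu(F_i\setminus F^*)\ge 0$; that $\mu(F^*)-\mu(F_i)$ is positive, negative, or zero according as $F_i$ is small, large, or of the same $\mu$-size as $F^*$; and the identity $c_i(F^*)-c_i(F_i)=f_3(c_i,F_i)\cdot\big(\mu(F^*)-\mu(F_i)\big)$ whenever $\mu(F_i)\ne\mu(F^*)$, so that the sign of $f_3(c_i,F_i)$ is that of $\mu(F^*)-\mu(F_i)$.

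First I would settle the cases in which $\delta_{i+1}$ has a transparent sign. In Cases~\hyperlink{2.1.1}{2.1.1} and~\hyperlink{4.1.1}{4.1.1} one has $\delta_{i+1}=0$ by definition and $\Delta_{i+1}=f_3(c_i,F_i)$, which is positive in Case~\hyperlink{2.1.1}{2.1.1} ($F_i$ small) and negative in Case~\hyperlink{4.1.1}{4.1.1} ($F_i$ large), as claimed; since these are the only branches producing $\delta_{i+1}=0$, it remains to show $\delta_{i+1}>0$ in every other branch. In Cases~\hyperlink{1.1}{1.1} and~\hyperlink{1.2.1}{1.2.1}, $\delta_{i+1}=f_1(c_i,F_i)$ is a quotient of positive numbers — $\mu(F^*\setminus F_i)=0$ is impossible here, since with $\mu(F_i)=\mu(F^*)$ it would force $F_i\triangle F^*\subseteq S_0$, on which every deviation vector of the form $\p{d,D}{\ell,u}{w}$ vanishes, so $c_i(F^*)>c_i(F_i)$ would be an instance-independent obstruction and the problem infeasible — hence $\delta_{i+1}>0$. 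In Cases~\hyperlink{3.1.1}{3.1.1} and~\hyperlink{5.1.1}{5.1.1}, $\delta_{i+1}=f_7(\cdot)$; here the numerator of $f_7$ is $f_3$ at its small argument minus $f_3$ at its large argument, one of the two terms vanishes because Claim~\ref{claim:inv_min_cost_span_correction_for_a_pair_v2} gives $c_i(F^*)=c_i(Z_i)$ resp.\ $c_i(F^*)=c_i(X_i)$, and the surviving term is $\pm f_3(c_i,F_i)$ with a sign that makes it positive; the denominator is the difference of a strictly positive term ($\mu(F^*\setminus F_i)/(\mu(F^*)-\mu(F_i))$ when $F_i$ is small, $\mu(F^*\setminus X_i)/(\mu(F^*)-\mu(X_i))$ when $F_i$ is large) and a nonpositive one, hence is strictly positive. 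Thus $\delta_{i+1}>0$.

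The remaining branches are exactly those in which the ``unbounded'' correction would cross a bound, and there $\delta_{i+1}$ equals $f_5(\cdot)$, $f_6(\cdot)$, $f_9(\cdot)$, or $f_{11}(\cdot)$. In each, the denominator of $\delta_{i+1}$ is $\mu(F_i\setminus F^*)$ or $\mu(F^*\setminus F^{}_i)$ and is strictly positive: either because the surviving sub-branch carries the side condition $F_i\setminus S_0\nsubseteq F^*\setminus S_0$ or $F^*\setminus S_0\nsubseteq F_i\setminus S_0$ (for $f_5$ and $f_9$), or automatically from $F_i$ being small, resp.\ large (for $f_6$, resp.\ $f_{11}$). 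For the numerator I would substitute the definitions and factor it as $\big(\mu(F^*)-\mu(F_i)\big)\cdot\big(f_3(c_i,F_i)-t\big)$, where $t$ is $\uin-d_i-D_i$ for $f_5$, $\lin-d_i-D_i$ for $f_{11}$, $\uout-D_i$ for $f_6$, and $\lout-D_i$ for $f_9$; then I would unwind the condition under which that branch was entered — reading it off directly when it is already stated in terms of $f_3(c_i,F_i)$, and otherwise rewriting $f_6$, or $f_7+f_8$, or $f_7+f_{12}$, in terms of $f_3(c_i,F_i)$ via Claim~\ref{claim:inv_min_cost_span_reformulating_Delta_iplusone} (together with $f_7\ge 0$, which yields $f_7+f_8\le f_3(c_i,F_i)$ when $F_i$ is small and $f_7+f_{12}\ge f_3(c_i,F_i)$ when $F_i$ is large) — to conclude $f_3(c_i,F_i)>t$ on the ``small'' side and $f_3(c_i,F_i)<t$ on the ``large'' side. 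Since $\mu(F^*)-\mu(F_i)$ is positive in the former case and negative in the latter, the numerator is positive in both, so $\delta_{i+1}>0$. This exhausts all branches and proves the claim.

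The case analysis is routine; the only real difficulty I anticipate is the bookkeeping — correctly matching each of the many case labels with its precise entry condition, and carrying out in every bound-hitting branch the short but error-prone algebra that turns that condition into the inequality $f_3(c_i,F_i)\gtrless t$. Claim~\ref{claim:inv_min_cost_span_reformulating_Delta_iplusone} is the single lever that makes this last step uniform across the branches involving $f_6$, $f_8$, or $f_{12}$. A minor secondary point is to confirm that each degenerate sub-case in which a denominator could vanish coincides with an instance the algorithm is entitled to reject as infeasible.
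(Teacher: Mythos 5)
Your proposal is correct and follows essentially the same route as the paper's: a branch-by-branch case analysis keyed on the sign of $\mu(F^*)-\mu(F_i)$, the defining inequalities that entered each branch, and the two auxiliary claims (the reformulation of $f_8$, $f_{12}$ in terms of $f_3$ and $f_7$, and $c_i(F^*)=c_i(Z_i)$ resp.\ $c_i(F^*)=c_i(X_i)$). The only differences are cosmetic: you uniformly reduce the bound-hitting branches to the inequality $f_3(c_i,F_i)\gtrless t$ via the factorization $\big(\mu(F^*)-\mu(F_i)\big)\cdot\big(f_3(c_i,F_i)-t\big)$, whereas the paper sometimes works directly with the expression for $\delta_{i+1}$ (e.g.\ in Case~\hyperlink{3.2.1}{3.2.1} it rewrites $f_6$ as $f_7$ plus a nonnegative term); both are routine algebra. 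You are also slightly more careful than the paper in noting explicitly that $\mu(F^*\setminus F_i)>0$ in Cases~\hyperlink{1.1}{1.1} and~\hyperlink{1.2.1}{1.2.1} (and that its vanishing would force infeasibility), a point the paper leaves implicit.
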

\begin{proof}
Clearly, $\delta_0 = \max \{ \lin - \uout, 0 \} \ge 0$. Assume that $F^*$ is not a minimum $c_i$-cost member of $\mathcal{F}$ and that Algorithm~\ref{algo:inv_min_cost_span} does not declare the problem to be infeasible in the $i$th step.  We distinguish the different cases the $i$th step might belong to.
\medskip

\noindent \textbf{Cases~\hyperlink{1.1}{1.1} and~\hyperlink{1.2.1}{1.2.1}.} We have
\[ \delta_{i+1} = f_1(c_i, F_i) = \frac{c_i(F^*) - c_i(F_i)}{\mu(F^* \setminus F_i)} > 0 . \]

\noindent \textbf{Cases~\hyperlink{2.1.1}{2.1.1} and~\hyperlink{4.1.1}{4.1.1}.} Clearly, $\delta_{i+1} = 0$. Furthermore, in Case~\hyperlink{2.1.1}{2.1.1},
\[ \Delta_{i+1} = f_3(c_i, F_i) = \frac{c_i(F^*) - c_i(F_i)}{\mu(F^*) - \mu(F_i)} > 0, \]
and in Case~\hyperlink{4.1.1}{4.1.1},
\[ \Delta_{i+1} = f_3(c_i, F_i) = \frac{c_i(F^*) - c_i(F_i)}{\mu(F^*) - \mu(F_i)} < 0 . \]

\noindent\textbf{Case~\hyperlink{2.1.2.1}{2.1.2.1}.} We have
\[ \delta_{i+1} = f_5(c_i, d_i, D_i, F_i) = \frac{\big( f_3(c_i, F_i) - (\uin - d_i - D_i) \big) \cdot \big( \mu(F^*) - \mu(F_i) \big)}{\mu(F_i \setminus F^*)} > 0 . \]

\noindent \textbf{Case~\hyperlink{2.2.1}{2.2.1}.} We have
\[ \delta_{i+1} = f_6(c_i, D_i, F_i) = \frac{ \big( f_3(c_i, F_i) - (\uout - D_i) \big) \cdot \big( \mu(F^*) - \mu(F_i) \big)}{\mu(F^* \setminus F_i)} > 0 . \]

\noindent \textbf{Case~\hyperlink{2.2.2.1}{2.2.2.1}.} Note that
\begin{align*}
&f_3(c_i, F_i) - (\uin - d_i - D_i) > f_3(c_i, F_i) - \big( f_6(c_i, D_i, F_i) + \uout - D_i \big) \\[2pt]
{}&{}~~= 
f_3(c_i, F_i) - \left( \frac{ \big( f_3(c_i, F_i) - (\uout - D_i) \big) \cdot \big( \mu(F^*) - \mu(F_i) \big)}{\mu(F^* \setminus F_i)} + \uout - D_i \right) \\[2pt]
{}&{}~~= 
f_3(c_i, F_i) \cdot \left( 1 - \frac{\mu(F^*) - \mu(F_i)}{\mu(F^* \setminus F_i)} \right) + (\uout - D_i) \cdot \left( \frac{\mu(F^*) - \mu(F_i)}{\mu(F^* \setminus F_i)} - 1 \right) \\[2pt]
{}&{}~~= 
\big( f_3(c_i, F_i) - (\uout - D_i) \big) \cdot \frac{\mu(F_i \setminus F^*)}{\mu(F^* \setminus F_i)}\\[2pt]
{}&{}~~> 
0 .
\end{align*}
Thus, we have
\[ \delta_{i+1} = f_5(c_i, d_i, D_i, F_i) = \frac{\big( f_3(c_i, F_i) - (\uin - d_i - D_i) \big) \cdot \big( \mu(F^*) - \mu(F_i) \big)}{\mu(F_i \setminus F^*)} > 0 . \]

\noindent \textbf{Case~\hyperlink{3.1.1}{3.1.1}.} By Claim~\ref{claim:inv_min_cost_span_correction_for_a_pair_v2}, we have
\[ \delta_{i+1} = f_7(c_i, F_i, Z_i) = \frac{\hphantom{x} \displaystyle \frac{c_i(F^*) - c_i(F_i)}{\mu(F^*) - \mu(F_i)} - \frac{c_i(F^*) - c_i(Z_i)}{\mu(F^*) - \mu(Z_i)} \hphantom{x}}{\displaystyle \frac{\mu\left( F^* \setminus F_i \right)}{\mu(F^*) - \mu(F_i)} - \frac{\mu\left( F^* \setminus Z_i \right)}{\mu(F^*) - \mu(Z_i)}} > 0 . \]

\noindent \textbf{Case~\hyperlink{3.1.2.1}{3.1.2.1}.} By Claim~\ref{claim:inv_min_cost_span_correction_for_a_pair_v2}, similarly as in Case~\hyperlink{3.1.1}{3.1.1}, $f_7(c_i, F_i, Z_i) > 0$ holds, implying
\begin{align*}
&f_3(c_i, F_i) - (\uin - d_i - D_i)\\[2pt] 
{}&{}~~> 
f_3(c_i, F_i) - \big( f_7(c_i, F_i, Z_i) + f_8(c_i, F_i, Z_i) \big) \\[2pt]
{}&{}~~= 
f_3(c_i, F_i) - f_7(c_i, F_i, Z_i) - \left( f_3(c_i, F_i) - f_7(c_i, F_i, Z_i) \cdot \frac{\mu(F^* \setminus F_i)}{\mu(F^*) - \mu(F_i)} \right) \\[2pt]
{}&{}~~=
f_7(c_i, F_i, Z_i) \cdot \frac{\mu(F_i \setminus F^*)}{\mu(F^*) - \mu(F_i)}\\[2pt]
{}&{}~~> 
0 .
\end{align*}
Thus we have
\[ \delta_{i+1} = f_5(c_i, d_i, D_i, F_i) = \frac{\big( f_3(c_i, F_i) - (\uin - d_i - D_i) \big) \cdot \big( \mu(F^*) - \mu(F_i) \big)}{\mu(F_i \setminus F^*)} > 0 . \]

\noindent \textbf{Case~\hyperlink{3.2.1}{3.2.1}.} By Claim~\ref{claim:inv_min_cost_span_correction_for_a_pair_v2}, similarly as in Case~\hyperlink{3.1.1}{3.1.1}, $f_7(c_i, F_i, Z_i) > 0$ holds. Thus we have
\begin{align*}
&\delta_{i+1}\\[2pt]
{}&{}~~=
f_6(c_i, D_i, F_i) \\[2pt]
{}&{}~~= 
\frac{f_8(c_i, F_i, Z_i) \cdot \big( \mu(F^*) - \mu(F_i) \big) + f_7(c_i, F_i, Z_i) \cdot \mu(F^* \setminus F_i)}{\mu(F^* \setminus F_i)}- \frac{(\uout - D_i) \cdot \big( \mu(F^*) - \mu(F_i) \big)}{\mu(F^* \setminus F_i)} \\[2pt]
{}&{}~~= 
f_7(c_i, F_i, Z_i) + \frac{\left( f_8(c_i, F_i, Z_i) - (\uout - D_i) \right) \cdot \big( \mu(F^*) - \mu(F_i) \big)}{\mu\left( F^* \setminus F_i \right)}\\[2pt]
{}&{}~~> 
0 .
\end{align*}

\noindent \textbf{Case~\hyperlink{3.2.2.1}{3.2.2.1}.} By Claim~\ref{claim:inv_min_cost_span_correction_for_a_pair_v2}, similarly as in Case~\hyperlink{3.1.1}{3.1.1}, $f_7(w_i, F_i, Z_i) > 0$ holds, so
\[ \uout - D_i < f_8(c_i, F_i, Z_i) = f_3(c_i, F_i) - f_7(c_i, F_i, Z_i) \cdot \frac{\mu(F^* \setminus F_i)}{\mu(F^*) - \mu(F_i)} < f_3(c_i, F_i) . \]
Thus, we have
\begin{align*}
&f_3(c_i, F_i) - (\uin - d_i - D_i) \\[2pt]
{}&{}~~>
f_3(c_i, F_i) - \big( f_6(c_i, D_i, F_i) + \uout - D_i \big) \\[2pt]  
{}&{}~~= 
f_3(c_i, F_i) - \frac{\big( f_3(c_i, F_i) - (\uout - D_i) \big) \cdot \big( \mu(F^*) - \mu(F_i) \big)}{\mu(F^* \setminus F_i)} - (\uout - D_i) \\[2pt]
{}&{}~~= 
f_3(c_i, F_i) \cdot \left( 1 - \frac{\mu(F^*) - \mu(F_i)}{\mu(F^* \setminus F_i)} \right)+
(\uout - D_i) \cdot \left( \frac{\mu(F^*) - \mu(F_i)}{\mu(F^* \setminus F_i)} - 1 \right) \\[2pt]
{}&{}~~= 
\big( f_3(c_i, F_i) - (\uout - D_i) \big) \cdot \frac{\mu(F_i \setminus F^*)}{\mu(F^* \setminus F_i)}\\[2pt]
{}&{}~~
> 0 .
\end{align*}
This implies
\[ \delta_{i+1} = f_5(c_i, d_i, D_i, F_i) = \frac{\big( f_3(c_i, F_i) - (\uin - d_i - D_i) \big) \cdot \big( \mu(F^*) - \mu(F_i) \big)}{\mu(F_i \setminus F^*)} > 0 . \]

\noindent \textbf{Case~\hyperlink{4.1.2.1}{4.1.2.1}.} We have
\[ \delta_{i+1} = f_9(c_i, D_i, F_i) = \frac{\big( f_3(c_i, F_i) - (\lout - D_i) \big) \cdot \big( \mu(F^*) - \mu(F_i) \big)}{\mu(F^* \setminus F_i)} > 0 . \]

\noindent \textbf{Case~\hyperlink{4.2.1}{4.2.1}.} We have
\[ \delta_{i+1} = f_{11}(c_i, d_i, D_i, F_i) = \frac{\big( f_3(c_i, F_i) - (\lin - d_i - D_i) \big) \cdot \big( \mu(F^*) - \mu(F_i) \big)}{\mu(F_i \setminus F^*)} > 0 . \]

\noindent \textbf{Case~\hyperlink{4.2.2.1}{4.2.2.1}.} Note that
\begin{align*}
&f_3(c_i, F_i) - (\lout - D_i)\\[2pt] 
{}&{}~~< 
f_3(c_i, F_i) - f_{10}(c_i, d_i, D_i, F_i) \\[2pt]
{}&{}~~= 
f_3(c_i, F_i)- \frac{- f_3(c_i, F_i) \cdot \big( \mu(F^*) - \mu(F_i) \big) + (\lin - d_i - D_i) \cdot \mu(F^* \setminus F_i)}{\mu(F_i \setminus F^*)} \\[2pt]
{}&{}~~= 
\frac{\big( f_3(c_i, F_i) - (\lin - d_i - D_i) \big) \cdot \mu(F^* \setminus F_i)}{\mu(F_i \setminus F^*)}\\[2pt]
{}&{}~~< 
0 .
\end{align*}
Thus, we have
\[ \delta_{i+1} = f_9(c_i, D_i, F_i) = \frac{ - \big( \lout - D_i - f_3(c_i, F_i) \big) \cdot \big( \mu(F^*) - \mu(F_i) \big)}{\mu(F^* \setminus F_i)} > 0 . \]

\noindent \textbf{Case~\hyperlink{5.1.1}{5.1.1}.} By Claim~\ref{claim:inv_min_cost_span_correction_for_a_pair_v2},
\[ \delta_{i+1} = f_7(c_i, X_i, F_i) = \frac{\hphantom{x} \displaystyle \frac{c_i(F^*) - c_i(X_i)}{\mu(F^*) - \mu(X_i)} - \frac{c_i(F^*) - c_i(F_i)}{\mu(F^*) - \mu(F_i)} \hphantom{x}}{\displaystyle \frac{\mu(F^* \setminus X_i)}{\mu(F^*) - \mu(X_i)} - \frac{\mu(F^* \setminus F_i)}{\mu(F^*) - \mu(F_i)}} > 0 . \]

\noindent \textbf{Case~\hyperlink{5.1.2.1}{5.1.2.1}.} By Claim~\ref{claim:inv_min_cost_span_correction_for_a_pair_v2}, similarly as in Case~\hyperlink{5.1.1}{5.1.1}, $f_7(c_i, X_i, F_i) > 0$ holds, so
\[ \lout - D_i > f_{12}(c_i, X_i, F_i) = f_3(c_i, F_i) - f_7(c_i, F_i, Z_i) \cdot \frac{\mu(F^* \setminus F_i)}{\mu(F^*) - \mu(F_i)} > f_3(c_i, F_i) . \]
Thus, we have
\[ \delta_{i+1} = f_9(c_i, D_i, F_i) = \frac{ \big( f_3(c_i, F_i) - (\lout - D_i) \big) \cdot \big( \mu(F^*) - \mu(F_i) \big)}{\mu(F^* \setminus F_i)} > 0 . \]

\noindent \textbf{Case~\hyperlink{5.2.1}{5.2.1}.} By Claim~\ref{claim:inv_min_cost_span_correction_for_a_pair_v2}, similarly as in Case~\hyperlink{5.1.1}{5.1.1}, $f_7(c_i, X_i, F_i) > 0$ holds, so
\begin{align*}
\lin - d_i - D_i 
{}&{}> 
f_7(c_i, X_i, F_i) + f_{12}(c_i, X_i, F_i) \\[2pt]
{}&{}= 
f_7(c_i, X_i, F_i) + f_3(c_i, F_i) - f_7(c_i, X_i, F_i) \cdot \frac{\mu(F^* \setminus F_i)}{\mu(F^*) - \mu(F_i)} \\[2pt]
{}&{}= 
f_3(c_i, F_i) - f_7(c_i, X_i, F_i) \cdot \frac{\mu(F_i \setminus F^*)}{\mu(F^*) - \mu(F_i)} \ge f_3(c_i, F_i) .
\end{align*}
Thus, we have
\[ \delta_{i+1} = f_{11}(c_i, d_i, D_i, F_i) = \frac{\big( f_3(c_i, F_i) - (\lin - d_i - D_i) \big) \cdot \big( \mu(F^*) - \mu(F_i) \big)}{\mu(F_i \setminus F^*)}> 0 . \]

\noindent \textbf{Case~\hyperlink{5.2.2.1}{5.2.2.1}.} By Claim~\ref{claim:inv_min_cost_span_correction_for_a_pair_v2}, similarly as in Case~\hyperlink{5.1.1}{5.1.1}, $f_7(c_i, X_i, F_i) > 0$ holds, and similarly as in Case~\hyperlink{5.2.1}{5.2.1}, $f_3(c_i, F_i) - (\lin - d_i - D_i) < 0$, so
\begin{align*}
&f_3(c_i, F_i) - (\lout - D_i)\\[2pt]
{}&{}~~< 
f_3(c_i, F_i) - f_{10}(c_i, d_i, D_i, F_i) \\[2pt]
{}&{}~~= 
f_3(c_i, F_i)- \frac{- f_3(c_i, F_i) \cdot \big( \mu(F^*) - \mu(F_i) \big) + (\lin - d_i - D_i) \cdot \mu(F^* \setminus F_i)}{\mu(F_i \setminus F^*)} \\[2pt]
{}&{}~~= 
f_3(c_i, F_i) \cdot \left( 1 + \frac{\mu(F^*) - \mu(F_i)}{\mu(F_i \setminus F^*)} \right) - (\lin - d_i - D_i) \cdot \frac{\mu(F^* \setminus F_i)}{\mu(F_i \setminus F^*)} \\[2pt]
{}&{}~~= 
\big( f_3(c_i, F_i) - (\lin - d_i - D_i) \big) \cdot \frac{\mu(F^* \setminus F_i)}{\mu(F_i \setminus F^*)}\\[2pt]
{}&{}~~< 
0 .
\end{align*}
Thus, we have
\[ \delta_{i+1} = f_9(c_i, D_i, F_i) = \frac{ \big( f_3(c_i, F_i) - (\lout - D_i) \big) \cdot \big( \mu(F^*) - \mu(F_i) \big)}{\mu(F^* \setminus F_i)} > 0 . \]
This concludes the proof of the claim.
\end{proof}

\begin{claim} \label{claim:inv_min_cost_span_forgetting}
If $\mu(F_i) < \mu(F^*)$ and $Z_i \ne \ast$, then $\delta_{i+1} \ge f_7(c_i, F_i, Z_i)$. If $\mu(F_i) > \mu(F^*)$ and $X_i \ne \ast$, then $\delta_{i+1} \ge f_7(c_i, X_i, F_i)$.
\end{claim}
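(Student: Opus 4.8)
The plan is to prove the first statement; the second follows by a symmetric argument. Assume $\mu(F_i)<\mu(F^*)$ and $Z_i\ne\ast$. Then the $i$th step falls under the ``$\mu(F_i)<\mu(F^*)$, $Z_i\ne\ast$'' branch of the algorithm, i.e.\ one of Cases~\hyperlink{3.1.1}{3.1.1}, \hyperlink{3.1.2.1}{3.1.2.1}, \hyperlink{3.1.2.2}{3.1.2.2}, \hyperlink{3.2.1}{3.2.1}, \hyperlink{3.2.2.1}{3.2.2.1}, \hyperlink{3.2.2.2}{3.2.2.2}. In the infeasibility cases~\hyperlink{3.1.2.2}{3.1.2.2} and~\hyperlink{3.2.2.2}{3.2.2.2} there is nothing to prove (or $\delta_{i+1}$ is not set), so it suffices to treat the four cases~\hyperlink{3.1.1}{3.1.1}, \hyperlink{3.1.2.1}{3.1.2.1}, \hyperlink{3.2.1}{3.2.1}, \hyperlink{3.2.2.1}{3.2.2.1}. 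In each of these, $\delta_{i+1}$ is defined as $f_7(c_i,F_i,Z_i)$ (Case~\hyperlink{3.1.1}{3.1.1}), $f_5(c_i,d_i,D_i,F_i)$ (Cases~\hyperlink{3.1.2.1}{3.1.2.1} and~\hyperlink{3.2.2.1}{3.2.2.1}), or $f_6(c_i,D_i,F_i)$ (Case~\hyperlink{3.2.1}{3.2.1}).

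For Case~\hyperlink{3.1.1}{3.1.1} the claim is trivial since $\delta_{i+1}=f_7(c_i,F_i,Z_i)$. For the other three, the strategy is to rewrite $\delta_{i+1}$ as $f_7(c_i,F_i,Z_i)$ plus a term that one shows to be nonnegative using the failed guard of the preceding (sub)case. Concretely: in Case~\hyperlink{3.2.1}{3.2.1} the computation already appearing in the proof of Claim~\ref{claim:inv_min_cost_span_nonneg_of_delta} shows
\[
f_6(c_i,D_i,F_i)=f_7(c_i,F_i,Z_i)+\frac{\big(f_8(c_i,F_i,Z_i)-(\uout-D_i)\big)\cdot\big(\mu(F^*)-\mu(F_i)\big)}{\mu\left(F^*\setminus F_i\right)},
\]
and since Case~\hyperlink{3.2.1}{3.2.1} is reached only when $D_i+f_8(c_i,F_i,Z_i)>\uout$ (the guard of Case~\hyperlink{3.1.1}{3.1.1}/\hyperlink{3.1.2.1}{3.1.2.1} having failed), the extra term is nonnegative, giving $\delta_{i+1}\ge f_7(c_i,F_i,Z_i)$. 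In Case~\hyperlink{3.1.2.1}{3.1.2.1}, using Claim~\ref{claim:inv_min_cost_span_reformulating_Delta_iplusone} to expand $f_8$ and the failed guard $d_i+D_i+f_7(c_i,F_i,Z_i)+f_8(c_i,F_i,Z_i)>\uin$, one obtains $f_3(c_i,F_i)-(\uin-d_i-D_i)>f_7(c_i,F_i,Z_i)\cdot\tfrac{\mu(F_i\setminus F^*)}{\mu(F^*)-\mu(F_i)}$, whence
\[
\delta_{i+1}=f_5(c_i,d_i,D_i,F_i)=\frac{\big(f_3(c_i,F_i)-(\uin-d_i-D_i)\big)\cdot\big(\mu(F^*)-\mu(F_i)\big)}{\mu\left(F_i\setminus F^*\right)}>f_7(c_i,F_i,Z_i).
\]
In Case~\hyperlink{3.2.2.1}{3.2.2.1}, one first uses the failed guard of Case~\hyperlink{3.2.1}{3.2.1}, namely $d_i+f_6(c_i,D_i,F_i)>\uin-\uout$, to derive (as in the proof of Claim~\ref{claim:inv_min_cost_span_nonneg_of_delta}) that $\uin-d_i-D_i<\uout-D_i+f_6(c_i,D_i,F_i)$, and combines this with the Case~\hyperlink{3.2.1}{3.2.1} identity above to get $f_3(c_i,F_i)-(\uin-d_i-D_i)>f_7(c_i,F_i,Z_i)\cdot\tfrac{\mu(F_i\setminus F^*)}{\mu(F^*)-\mu(F_i)}$ again, yielding $\delta_{i+1}=f_5(c_i,d_i,D_i,F_i)>f_7(c_i,F_i,Z_i)$.

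The main obstacle is purely bookkeeping: one must track, for each of the four relevant cases, exactly which guard condition has failed on the path to that case, and substitute the corresponding strict inequality into the algebraic identity relating $\delta_{i+1}$ to $f_7(c_i,F_i,Z_i)$. Since $\mu(F_i)<\mu(F^*)$ guarantees $\mu(F^*)-\mu(F_i)>0$ and $\mu(F^*\setminus F_i),\mu(F_i\setminus F^*)>0$ (the latter being ensured by the $F_i\setminus S_0\nsubseteq F^*\setminus S_0$ precondition attached to $f_4,f_5$), all the divisions are by positive quantities and the sign manipulations go through without surprises. The second statement, for $\mu(F_i)>\mu(F^*)$ and $X_i\ne\ast$, is proved by the completely analogous analysis of Cases~5.1--5.2, using $f_{12}$ in place of $f_8$ and the symmetric identities already recorded in the proof of Claim~\ref{claim:inv_min_cost_span_nonneg_of_delta}.
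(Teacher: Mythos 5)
Your proposal is correct and follows essentially the same route as the paper: a case split over Cases~\hyperlink{3.1.1}{3.1.1}, \hyperlink{3.1.2.1}{3.1.2.1}, \hyperlink{3.2.1}{3.2.1}, \hyperlink{3.2.2.1}{3.2.2.1} (and symmetrically the Cases~5.*), with Case~\hyperlink{3.1.1}{3.1.1} trivial and the others handled by substituting the failed guard inequality into the algebraic identity that rewrites $\delta_{i+1}$ (given by $f_5$ or $f_6$) as $f_7$ plus a nonnegative remainder. The only difference is presentational: you reuse the expansions already recorded in Claim~\ref{claim:inv_min_cost_span_nonneg_of_delta}, whereas the paper re-derives them inline; in Case~\hyperlink{3.2.2.1}{3.2.2.1} your sketch should also explicitly invoke the Case~3.2 guard $\uout - D_i < f_8(c_i,F_i,Z_i)$ in addition to the Case~\hyperlink{3.2.1}{3.2.1} guard, as the paper does, to bound the remainder term.
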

\begin{proof}
We distinguish the different cases the $i$th step might belong to.
\medskip

\noindent \textbf{Cases~\hyperlink{3.1.1}{3.1.1} and~\hyperlink{5.1.1}{5.1.1}.} In this case $\delta_{i+1} = f_7(c_i, F_i, Z_i)$ and we are done.
\medskip

\noindent \textbf{Case~\hyperlink{3.1.2.1}{3.1.2.1}.} Note that
\begin{align*}
\uin - d_i - D_i 
{}&{}< 
f_7(c_i, F_i, Z_i) + f_8(c_i, F_i, Z_i) \\[2pt]
{}&{}~~= 
f_7(c_i, F_i, Z_i) \cdot \left( 1 - \frac{\mu(F^* \setminus F_i)}{\mu(F^*) - \mu(F_i)} \right) + \frac{c_i(F^*) - c_i(F_i)}{\mu(F^*) - \mu(F_i)} \\[2pt]
{}&{}~~= 
\frac{c_i(F^*) - c_i(F_i) - f_7(c_i, F_i, Z_i) \cdot \mu(F_i \setminus F^*)}{\mu(F^*) - \mu(F_i)} .
\end{align*}
Thus, we have 
\begin{align*}
\delta_{i+1} 
{}&{}= 
f_5(c_i, d_i, D_i, F_i)\\[2pt] 
{}&{}= 
\frac{c_i(F^*) - c_i(F_i) - (\uin - d_i - D_i) \cdot \big( \mu(F^*) - \mu(F_i) \big)}{\mu(F_i \setminus F^*)} \\[2pt]
{}&{}> 
\frac{c_i(F^*) - c_i(F_i)}{\mu(F_i \setminus F^*)} - \frac{c_i(F^*) - c_i(F_i) - f_7(c_i, F_i, Z_i) \cdot \mu(F_i \setminus F^*)}{\mu(F_i \setminus F^*)} \\[2pt]
{}&{}= 
f_7(c_i, F_i, Z_i).
\end{align*}

\noindent \textbf{Case~\hyperlink{3.2.1}{3.2.1}.} We have
\begin{align*}
\delta_{i+1} 
{}&{}= 
f_6(c_i, D_i, F_i)\\[2pt]
{}&{}= 
\frac{c_i(F^*) - c_i(F_i) - (\uout - D_i) \cdot \big( \mu(F^*) - \mu(F_i) \big)}{\mu(F^* \setminus F_i)} \\[2pt]
{}&{}> 
\frac{c_i(F^*) - c_i(F_i) - f_8(c_i, F_i, Z_i) \cdot \big( \mu(F^*) - \mu(F_i) \big)}{\mu(F^* \setminus F_i)}\\[2pt]
{}&{}= 
f_7(c_i, F_i, Z_i) .
\end{align*}

\noindent \textbf{Case~\hyperlink{3.2.2.1}{3.2.2.1}.} Note that
\begin{align*}
\uin - d_i - D_i
{}&{}< 
f_6(c_i, D_i, F_i) + \uout - D_i \\[2pt]
{}&{}= 
\frac{c_i(F^*) - c_i(F_i) - (\uout - D_i) \cdot \big( \mu(F^*) - \mu(F_i) \big)}{\mu(F^* \setminus F_i)} + (\uout - D_i) \\[2pt]
{}&{}= 
\frac{c_i(F^*) - c_i(F_i)}{\mu(F^* \setminus F_i)} - (\uout - D_i) \cdot \left( \frac{\mu(F^*) - \mu(F_i)}{\mu(F^* \setminus F_i)} - 1 \right) \\[2pt]
{}&{}= 
\frac{c_i(F^*) - c_i(F_i)}{\mu(F^* \setminus F_i)} + (\uout - D_i) \cdot \frac{\mu(F_i \setminus F^*)}{\mu(F^* \setminus F_i)} \\[2pt]
{}&{}< 
\frac{c_i(F^*) - c_i(F_i)}{\mu(F^* \setminus F_i)} + f_8(c_i, F_i, Z_i) \cdot \frac{\mu(F_i \setminus F^*)}{\mu(F^* \setminus F_i)} \\[2pt]
{}&{}=
\frac{c_i(F^*) - c_i(F_i)}{\mu(F^* \setminus F_i)} \cdot \left( 1 + \frac{\mu(F_i \setminus F^*)}{\mu(F^*) - \mu(F_i)} \right)- f_7(c_i, F_i, Z_i) \cdot \frac{\mu(F_i \setminus F^*)}{\mu(F^*) - \mu(F_i)} \\[2pt]
{}&{}= \frac{c_i(F^*) - c_i(F_i) - f_7(c_i, F_i, Z_i) \cdot \mu(F_i \setminus F^*)}{\mu(F^*) - \mu(F_i)} .
\end{align*}
Thus, we have
\begin{align*}
\delta_{i+1} 
{}&{}= 
f_5(c_i, d_i, D_i, F_i)\\[2pt]
{}&{}= 
\frac{c_i(F^*) - c_i(F_i) - (\uin - d_i - D_i) \cdot \big( \mu(F^*) - \mu(F_i) \big)}{\mu(F_i \setminus F^*)} \\[2pt]
{}&{}> 
f_7(c_i, F_i, Z_i) .
\end{align*}

\noindent \textbf{Case~\hyperlink{5.1.2.1}{5.1.2.1}.} We have
\begin{align*}
\delta_{i+1}
{}&{}= 
f_9(c_i, D_i, F_i)\\[2pt]
{}&{}= 
\frac{c_i(F^*) - c_i(F_i) - (\lout - D_i) \cdot \big( \mu(F^*) - \mu(F_i) \big)}{\mu(F^* \setminus F_i)} \\[2pt]
{}&{}> 
\frac{c_i(F^*) - c_i(F_i) - f_{12}(c_i, X_i, F_i) \cdot \big( \mu(F^*) - \mu(F_i) \big)}{\mu(F^* \setminus F_i)}\\[2pt]
{}&{}= f_7(c_i, F_i, Z_i) .
\end{align*}

\noindent \textbf{Case~\hyperlink{5.2.1}{5.2.1}.} Note that
\begin{align*}
\lin - d_i - D_i 
{}&{}> 
f_7(c_i, X_i, F_i) + f_{12}(c_i, X_i, F_i) \\[2pt]
{}&{}= 
f_7(c_i, X_i, F_i) + \frac{c_i(F^*) - c_i(F_i) - f_7(c_i, X_i, F_i) \cdot \mu(F^* \setminus F_i)}{\mu(F^*) - \mu(F_i)} \\[2pt]
{}&{}= 
f_7(c_i, X_i, F_i) \cdot \left( 1 - \frac{\mu(F^* \setminus F_i)}{\mu(F^*) - \mu(F_i)} \right) + \frac{c_i(F^*) - c_i(F_i)}{\mu(F^*) - \mu(F_i)} \\[2pt]
{}&{}= 
\frac{c_i(F^*) - c_i(F_i) - f_7(c_i, X_i, F_i) \cdot \mu(F_i \setminus F^*)}{\mu(F^*) - \mu(F_i)} .
\end{align*}
Thus, we have
\begin{align*}
\delta_{i+1} 
{}&{}= 
f_{11}(c_i, d_i, D_i, F_i)\\[2pt] 
{}&{}= \frac{c_i(F^*) - c_i(F_i) - (\lin - d_i - D_i) \cdot \big( \mu(F^*) - \mu(F_i) \big)}{\mu(F_i \setminus F^*)} \\[2pt]
{}&{}> 
\frac{c_i(F^*) - c_i(F_i)}{\mu(F_i \setminus F^*)}- \frac{c_i(F^*) - c_i(F_i) - f_7(c_i, X_i, F_i) \cdot \mu(F_i \setminus F^*)}{\mu(F^*) - \mu(F_i)} \cdot \frac{\mu(F^*) - \mu(F_i)}{\mu(F_i \setminus F^*)} \\[2pt]
{}&{}= 
f_7(c_i, X_i, F_i) .
\end{align*}

\noindent \textbf{Case~\hyperlink{5.2.2.1}{5.2.2.1}.} Similarly as in Case~\hyperlink{5.2.1}{5.2.1}, we obtain
\begin{align*}
\lin - d_i - D_i 
{}&{}> 
f_7(c_i, X_i, F_i) + f_{12}(c_i, X_i, F_i) \\[2pt]
{}&{}= 
\frac{c_i(F^*) - c_i(F_i) - f_7(c_i, X_i, F_i) \cdot \mu(F_i \setminus F^*)}{\mu(F^*) - \mu(F_i)} .
\end{align*}
Thus, we have
\begin{align*}
\delta_{i+1} 
{}&{}= 
f_9(c_i, D_i, F_i)\\[2pt] 
{}&{}= \frac{c_i(F^*) - c_i(F_i) - (\lout - D_i) \cdot \big( \mu(F^*) - \mu(F_i) \big)}{\mu(F^* \setminus F_i)} \\[2pt]
{}&{}> 
\frac{c_i(F^*) - c_i(F_i) - f_{10}(c_i, d_i, D_i, F_i) \cdot \big( \mu(F^*) - \mu(F_i) \big)}{\mu(F^* \setminus F_i)} \\[2pt]
{}&{}= 
\frac{c_i(F^*) - c_i(F_i)}{\mu(F^* \setminus F_i)}- \frac{c_i(F_i) - c_i(F^*) + (\lin - d_i - D_i) \cdot \mu(F^* \setminus F_i)}{\mu(F_i \setminus F^*)} \cdot \frac{\mu(F^*) - \mu(F_i)}{\mu(F^* \setminus F_i)} \\[2pt]
{}&{}= 
\frac{c_i(F^*) - c_i(F_i)}{\mu(F^* \setminus F_i)} \cdot \left( 1 + \frac{\mu(F^*) - \mu(F_i)}{\mu(F_i \setminus F^*)} \right)- (\lin - d_i - D_i) \cdot \frac{\mu(F^*) - \mu(F_i)}{\mu(F_i \setminus F^*)} \\[2pt]
{}&{}= 
\frac{c_i(F^*) - c_i(F_i)}{\mu(F_i \setminus F^*)} - (\lin - d_i - D_i) \cdot \frac{\mu( F^*) - \mu(F_i)}{\mu(F_i \setminus F^*)} \\[2pt]
{}&{}> 
\frac{c_i(F^*) - c_i(F_i)}{\mu(F_i \setminus F^*)} - \frac{c_i(F^*) - c_i(F_i) - f_7(c_i, X_i, F_i) \cdot \mu(F_i \setminus F^*)}{\mu(F_i \setminus F^*)} \\[2pt]
{}&{}= 
f_7(c_i, X_i, F_i) .
\end{align*}
This concludes the proof of the claim.
\end{proof}

\begin{claim} \label{claim:inv_min_cost_span_notsamesize}
\mbox{}
\begin{enumerate}[label=(\alph*)]\itemsep0em
\item \label{notsame:a} If $X_{i_1+1}, X_{i_2+1} \ne \ast$ and $\mu(X_{i_1+1}) = \mu(X_{i_2+1})$ for some $i_1 < i_2$, then
\begin{equation*}
\mu(X_{i_1+1} \cap F^*) \le \mu(X_{i_2+1} \cap F^*)
\end{equation*}
where equality implies $c(F_{i_1}) = c(F_{i_2})$.
\item \label{notsame:b} If $Z_{i_1+1}, Z_{i_2+1} \ne \ast$ and $\mu(Z_{i_1+1}) = \mu(Z_{i_2+1})$ for some $i_1<i_2$, then
\begin{equation*}
\mu(Z_{i_1+1} \cap F^*) \le \mu(Z_{i_2+1} \cap F^*)
\end{equation*}
where equality implies $c(F_{i_1}) = c(F_{i_2})$.
\end{enumerate}
\end{claim}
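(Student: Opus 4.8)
The plan is to prove part~\ref{notsame:b} in detail (the proof of part~\ref{notsame:a} is symmetric, obtained by swapping the roles of small and large bad sets, of $X$ and $Z$, and of the lower and upper bounds). So suppose $i_1<i_2$ are indices with $Z_{i_1+1},Z_{i_2+1}\neq\ast$ and $\mu(Z_{i_1+1})=\mu(Z_{i_2+1})$. The first step is to understand when $Z$ gets updated: a new large bad set becomes the current $Z$ only in Cases~\hyperlink{4.1.1}{4.1.1}, \hyperlink{4.1.2.1}{4.1.2.1}, \hyperlink{4.2.1}{4.2.1}, \hyperlink{4.2.2.1}{4.2.2.1}, \hyperlink{5.1.1}{5.1.1}, \hyperlink{5.1.2.1}{5.1.2.1}, \hyperlink{5.2.1}{5.2.1}, and~\hyperlink{5.2.2.1}{5.2.2.1}, i.e.\ precisely when $\mu(F_i)>\mu(F^*)$ and the algorithm does not return \texttt{Infeasible}; in all these cases $Z_{i+1}=F_i$. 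So there are indices $j_1\le i_1<j_2\le i_2$ with $Z_{i_1+1}=F_{j_1}$, $Z_{i_2+1}=F_{j_2}$, $\mu(F_{j_1})=\mu(F_{j_2})>\mu(F^*)$, $j_1$ and $j_2$ are the most recent ``large'' steps up to $i_1$ and $i_2$ respectively, and (since $Z_{i_1+1}\neq\ast$ survived until step $i_2$) no step strictly between $j_1$ and $j_2$ is a large step --- otherwise $Z$ would have been overwritten. Hence we may in fact take $j_2=i_2$ and it suffices to compare $F_{j_1}$ with $F_{j_2}$, with no large step occurring in between.

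The key idea is then to track the quantity $\mu(F^*\cap Z)$ (equivalently $c(F^*)-c(Z)$, using that $F^*$ and the current $Z$ have equal cost with respect to the current function by Claim~\ref{claim:inv_min_cost_span_correction_for_a_pair_v2}) across the iterations between $j_1$ and $j_2$. Between these two large steps, the current large set $Z$ is never replaced, so at every step $i$ with $j_1<i<j_2$ we are in one of the ``small'' or ``same-size'' cases. I would set up the telescoping argument: using Claim~\ref{claim:inv_min_cost_span_correction_for_a_pair_v2}, we have $c_{j_1+1}(F^*)=c_{j_1+1}(F_{j_1})$ and, since $Z$ does not change, $c_{i}(F^*)=c_{i}(Z_i)=c_i(F_{j_1})$ for all $i$ in the range $j_1+1\le i\le j_2$. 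Applying the cost update $c_{i+1}=c-\p{d_{i+1},D_{i+1}}{\ell,u}{w}$ and the fact that $\mu(F^*\setminus F_{j_1})=\mu(F^*)-\mu(F_{j_1})+\mu(F_{j_1}\setminus F^*)$, the equality $c_i(F^*)=c_i(F_{j_1})$ rewrites as
\begin{equation*}
c(F^*)-c(F_{j_1})=(d_i+D_i)\cdot\bigl(\mu(F^*)-\mu(F_{j_1})\bigr)+d_i\cdot\mu(F_{j_1}\setminus F^*).
\end{equation*}
At step $j_2$ the set $F_{j_2}$ is large and bad, so $c_{j_2}(F^*)>c_{j_2}(F_{j_2})$, which similarly rewrites as $c(F^*)-c(F_{j_2})>(d_{j_2}+D_{j_2})(\mu(F^*)-\mu(F_{j_2}))+d_{j_2}\mu(F_{j_2}\setminus F^*)$. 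Subtracting the identity for $Z=F_{j_1}$ (at $i=j_2$) from this inequality, and using $\mu(F_{j_1})=\mu(F_{j_2})$, all the $D_{j_2}$-terms cancel and one is left with an inequality of the form
\begin{equation*}
c(F_{j_1})-c(F_{j_2})>d_{j_2}\cdot\bigl(\mu(F_{j_1}\setminus F^*)-\mu(F_{j_2}\setminus F^*)\bigr)=d_{j_2}\cdot\bigl(\mu(F_{j_1}\cap F^*)-\mu(F_{j_2}\cap F^*)\bigr),
\end{equation*}
again invoking $\mu(F_{j_1})=\mu(F_{j_2})$.

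The main obstacle is the final sign argument: I need to show $c(F_{j_1})-c(F_{j_2})\le 0$ forces $\mu(F_{j_1}\cap F^*)\le\mu(F_{j_2}\cap F^*)$, with equality only if $c(F_{j_1})=c(F_{j_2})$. To do this I would run the symmetric comparison at step $j_1$: there $F_{j_1}$ is large and bad under $c_{j_1}$, i.e.\ $c_{j_1}(F^*)>c_{j_1}(F_{j_1})$, which gives $c(F^*)-c(F_{j_1})>(d_{j_1}+D_{j_1})(\mu(F^*)-\mu(F_{j_1}))+d_{j_1}\mu(F_{j_1}\setminus F^*)$; meanwhile at that moment $F_{j_2}$ was not yet ``bad'' enough to be selected, which at the least yields $c_{j_1}(F_{j_2})\ge c_{j_1}(F_{j_1})$ (as $F_{j_1}$ is the minimizer chosen by $\cO$). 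Combining these, using that $d_i$ is nondecreasing (Claim~\ref{claim:inv_min_cost_span_nonneg_of_delta}: each $\delta_{i+1}\ge 0$, so $d_{j_2}\ge d_{j_1}$) and that only ``small''-type steps occur between $j_1$ and $j_2$ so that $\mu(F_{j_2})$'s contribution is controlled, I should be able to chain the two inequalities to conclude $c(F_{j_1})\le c(F_{j_2})$, which plugged into the boxed inequality above gives $d_{j_2}(\mu(F_{j_1}\cap F^*)-\mu(F_{j_2}\cap F^*))<0$; since $d_{j_2}\ge 0$ and, if $d_{j_2}=0$ then we are in a pure Case~\hyperlink{4.1.1}{4.1.1}/\hyperlink{5.1.1}{5.1.1} situation where a separate direct check applies, we obtain $\mu(F_{j_1}\cap F^*)<\mu(F_{j_2}\cap F^*)$ strictly, and the equality case collapses to $c(F_{j_1})=c(F_{j_2})$. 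The delicate point throughout is bookkeeping which of the twelve cases can occur between two consecutive large steps and verifying the claimed monotonicity of $c_i(F^*)-c_i(Z_i)$ does not get disturbed; I expect this case analysis, rather than any single computation, to be the real work.
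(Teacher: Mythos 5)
Your high-level strategy---play off the minimizer conditions at the two indices where the bad set was recorded against the monotonicity $d_{i_1}\le d_{i_2}$ guaranteed by Claim~\ref{claim:inv_min_cost_span_nonneg_of_delta}---is the same as the paper's (the paper just packages the two cost comparisons into a single telescoping chain $0\le c_{i_2}(X_{i_1+1})-c_{i_2}(F_{i_2})=c_{i_1}(F_{i_1})-c_{i_1}(X_{i_2+1})-(d_{i_2}-d_{i_1})\cdot(\cdots)\le 0$). However, there are two concrete problems in the way you set things up.

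First, the reduction is not valid as stated. From $Z_{i_1+1}\ne\ast$ and $Z_{i_2+1}\ne\ast$ you cannot conclude that ``$Z_{i_1+1}\neq\ast$ survived until step $i_2$'' or that ``no step strictly between $j_1$ and $j_2$ is a large step'': between $i_1$ and $j_2$ the algorithm may reset $Z$ to $\ast$ (Cases~1, and several subcases of Case~3) and later overwrite it at intervening large steps, all perfectly compatible with $Z_{i_2+1}=F_{j_2}$. This is not harmless, because the only strict inequality in your chain comes from combining $c_{j_2}(F^*)>c_{j_2}(F_{j_2})$ with $c_{j_2}(F^*)=c_{j_2}(F_{j_1})$, and the latter invocation of Claim~\ref{claim:inv_min_cost_span_correction_for_a_pair_v2} requires $Z_{j_2}=F_{j_1}$, i.e.\ exactly the survival you have not established. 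The paper's reduction is strictly weaker and does not need it: it only relabels so that $Z_{i_1+1}=F_{i_1}$ and $Z_{i_2+1}=F_{i_2}$, and then uses solely that $F_{i_1}$ is a $c_{i_1}$-minimizer and $F_{i_2}$ is a $c_{i_2}$-minimizer, with no hypothesis on the in-between behaviour.

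Second, once the unjustified strict inequality is replaced by the one that is actually available ($c_{j_2}(F_{j_1})\ge c_{j_2}(F_{j_2})$, non-strict), your combination yields only $(d_{j_2}-d_{j_1})\cdot\bigl(\mu(F_{j_1}\cap F^*)-\mu(F_{j_2}\cap F^*)\bigr)\le 0$, which is inconclusive precisely when $d_{j_2}=d_{j_1}$. This is the degenerate case you wave at; note it is governed by $d_{j_2}=d_{j_1}$ (equivalently $\delta_{j_1+1}=\dots=\delta_{j_2}=0$), not by ``$d_{j_2}=0$''. Closing it is where the actual content lies: by the equality part of Claim~\ref{claim:inv_min_cost_span_nonneg_of_delta}, $\delta_{i+1}=0$ forces step $i$ into Case~\hyperlink{2.1.1}{2.1.1} or Case~\hyperlink{4.1.1}{4.1.1}; since every resetting case has $\delta>0$, the register $X$ (resp.\ $Z$) stays non-$\ast$ throughout, which excludes one of the two options and pins down all the intermediate $\Delta$'s to a single sign. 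One then runs a second cost computation on $c(F^*)-c(F_{i_1})$, anchored by $c_{i_1+1}(F^*)=c_{i_1+1}(F_{i_1})$ from Lemma~\ref{lemma:inv_min_cost_span_correction}, to obtain the contradiction. Your ``a separate direct check applies'' is this argument; as written, the proof is missing it.
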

\begin{proof}
First, consider the case when $X_{i_1+1}, X_{i_2+1} \ne \ast$. Without loss of generality, we may assume that $X_{i_1+1} = F_{i_1}$ and $X_{i_2+1} = F_{i_2}$. Suppose to the contrary that $\mu(X_{i_1+1} \cap F^*) > \mu(X_{i_2+1} \cap F^*)$.
By Claim~\ref{claim:inv_min_cost_span_nonneg_of_delta},
\begin{align*}
0
{}&{}\le 
c_{i_2}(X_{i_1+1}) - c_{i_2}(F_{i_2}) \\[2pt]
{}&{}= 
c_{i_2}(X_{i_1+1}) - c_{i_2}(X_{i_2+1}) \\[2pt]
{}&{}= 
c_{i_1}(X_{i_1+1}) - c_{i_1}(X_{i_2+1})- (\delta_{i_1+1} + \ldots + \delta_{i_2}) \cdot \big( \mu(X_{i_1+1} \cap F^*) - \mu(X_{i_2+1} \cap F^*) \big) \\[2pt]
{}&{}~~ - (\Delta_{i_1+1} + \ldots + \Delta_{i_2}) \cdot \big( \mu(X_{i_1+1}) - \mu(X_{i_2+1}) \big) \\[2pt]
{}&{}= 
c_{i_1}(F_{i_1}) - c_{i_1}(X_{i_2+1})- (\delta_{i_1+1} + \ldots + \delta_{i_2}) \cdot \big( \mu(X_{i_1+1} \cap F^*) - \mu(X_{i_2+1}  \cap F^*) \big) \\[2pt]
{}&{}~~ - (\Delta_{i_1+1} + \ldots + \Delta_{i_2}) \cdot 0\\[2pt]
{}&{}\le 
0 ,
\end{align*}
which is only possible if $c_{i_2}(X_{i_2+1}) = c_{i_2}(X_{i_1+1})$, $c_{i_1}(X_{i_2+1}) = c_{i_1}(X_{i_1+1})$, and $\delta_{i_1+1} = \ldots = \delta_{i_2} = 0$. This implies that each of the $i_1$th, $(i_1+1)$th, $\dots$, $(i_2-1)$th steps belong to Case~\hyperlink{2.1.1}{2.1.1}, thus $\Delta_{i_1+1}, \ldots, \Delta_{i_2} > 0$. Note that $F^*$ is not a minimum $c_{i_2}$-cost member of $\mathcal{F}$ since $X_{i_2+1} \ne \ast$. Hence, by Lemma~\ref{lemma:inv_min_cost_span_correction},
\begin{align*}
0
{}&{}< 
c_{i_2}(F^*) - c_{i_2}(F_{i_2})\\[2pt] 
{}&{}= 
c_{i_2}(F^*) - c_{i_2}(X_{i_2+1}) \\[2pt] 
{}&{}=
c_{i_2}(F^*) - c_{i_2}(X_{i_1+1}) \\[2pt]
{}&{}= 
c_{i_1+1}(F^*) - c_{i_1+1}(X_{i_1+1}) - (\delta_{i_1+2} + \ldots + \delta_{i_2}) \cdot \big( \mu(F^*) - \mu( X_{i_1+1} \cap F^*) \big) \\[2pt]
{}&{}~~ - (\Delta_{i_1+2} + \ldots + \Delta_{i_2}) \cdot \big( \mu(F^*) - \mu(X_{i_2+1}) \big) \\[2pt]
{}&{}= 
c_{i_1+1}(F^*) - c_{i_1+1}(F_{i_1}) - 0 \cdot \big( \mu(F^*) - \mu( X_{i_1+1} \cap F^*) \big) \\[2pt]
{}&{}~~ - (\Delta_{i_1+2} + \ldots + \Delta_{i_2}) \cdot \big( \mu(F^*) - \mu(X_{i_2+1}) \big)\\[2pt]
{}&{}< 
0 ,
\end{align*}
a contradiction.

Now assume $\mu(X_{i_1+1} \cap F^*) = \mu(X_{i_2+1}  \cap F^*)$. Then
\begin{align*}
0 
{}&{}\le 
c_{i_2}(X_{i_1+1}) - c_{i_2}(F_{i_2}) \\[2pt] 
{}&{}=
c_{i_2}(X_{i_1+1}) - c_{i_2}(X_{i_2+1}) \\[2pt]
{}&{}= 
c_{i_1}(X_{i_1+1}) - c_{i_1}(X_{i_2+1})- (\delta_{i_1+1} + \ldots + \delta_{i_2}) \cdot \big( \mu(X_{i_1+1} \cap F^*) - \mu(X_{i_2+1} \cap F^*) \big) \\[2pt]
{}&{}~~ - (\Delta_{i_1+1} + \ldots + \Delta_{i_2}) \cdot \big( \mu(X_{i_1+1}) - \mu(X_{i_2+1}) \big) \\[2pt]
{}&{}= 
c_{i_1}(X_{i_1+1}) - c_{i_1}(X_{i_2+1}) - 0 \cdot \big( \mu(X_{i_1+1} \cap F^*) - \mu(X_{i_2+1} \cap F^*) \big) \\[2pt]
{}&{}~~ - (\Delta_{i_1+1} + \ldots + \Delta_{i_2}) \cdot 0 \\[2pt]
{}&{}\le 
0.
\end{align*}
Thus, we have
\begin{align*}
0 
{}&{}= 
c_{i_2}(X_{i_1+1}) - c_{i_2}(X_{i_2+1}) \\[2pt]
{}&{}= 
c(X_{i_1+1}) - c(X_{i_2+1}) - d_{i_2} \! \cdot \! \big( \mu(X_{i_1+1} \cap F^*) - \mu(X_{i_2+1}  \cap F^*) \! \big) - D_{i_2} \! \cdot \! \big( \mu(X_{i_1+1}) - \mu(X_{i_2+1}) \big)\\[2pt]
{}&{}= 
c(X_{i_1+1}) - c(X_{i_2+1}) - d_{i_2} \cdot 0 - D_{i_2} \cdot 0\\[2pt]
{}&{}= 
c(X_{i_1+1}) - c(X_{i_2+1}).
\end{align*}

The second statement of the lemma can be proved analogously.
\end{proof}

\begin{claim} \label{claim:inv_min_cost_span_pairs_corrected_once}
 Let $F', F''' \in \mathcal{F}$ be such that $\mu(F') < \mu(F^*) < \mu(F''')$, and let $\delta, \Delta \in \mathbb{R}$ be such that $\delta \ge f_7(c, F', F''')$.
Then neither
\begin{equation*}
(c - \p{\delta, \Delta}{\ell, u}{w})(F') < (c - \p{\delta, \Delta}{\ell, u}{w})(F^*) = (c - \p{\delta, \Delta}{\ell, u}{w})(F''')
\end{equation*}
nor
\begin{equation*}
(c - \p{\delta, \Delta}{\ell, u}{w})(F') = (c - \p{\delta, \Delta}{\ell, u}{w})(F^*) > (c - \p{\delta, \Delta}{\ell, u}{w})(F''')
\end{equation*}
hold.
\end{claim}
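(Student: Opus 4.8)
The plan is to compute the two quantities
$(c-\p{\delta,\Delta}{\ell,u}{w})(F^*)-(c-\p{\delta,\Delta}{\ell,u}{w})(F')$ and $(c-\p{\delta,\Delta}{\ell,u}{w})(F^*)-(c-\p{\delta,\Delta}{\ell,u}{w})(F''')$ under the given sizes $\mu(F')<\mu(F^*)<\mu(F''')$, and to derive a contradiction from the assumed pattern of strict/equality relations. First I would recall that since $\lin=\lout=-\infty$ and $\uin=\uout=+\infty$ in the unconstrained setting relevant to $f_7$ (or, in the constrained setting, that the membership of $F',F^*,F'''$ in the relevant size classes makes the truncations behave additively on the coordinates of $F'\triangle F^*$ and $F'''\triangle F^*$), one has the clean identities
\begin{align*}
(c-\p{\delta,\Delta}{\ell,u}{w})(F^*)-(c-\p{\delta,\Delta}{\ell,u}{w})(F')&=c(F^*)-c(F')-\delta\cdot\mu(F^*\setminus F')-\Delta\cdot\big(\mu(F^*)-\mu(F')\big),\\
(c-\p{\delta,\Delta}{\ell,u}{w})(F^*)-(c-\p{\delta,\Delta}{\ell,u}{w})(F''')&=c(F^*)-c(F''')-\delta\cdot\mu(F^*\setminus F''')-\Delta\cdot\big(\mu(F^*)-\mu(F''')\big).
\end{align*}
These are exactly the kind of expressions already appearing in the proof of Lemma~\ref{lem:inv_min_cost_span_n&s_cond_for_infeas} and in the definitions of $f_3,\dots,f_{12}$.

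Next I would eliminate $\Delta$. Suppose the first forbidden pattern held: the second quantity equals $0$ and the first is strictly positive (note the inequality in the claim is written on $c-p$, so ``$<$'' on costs corresponds to ``$>$'' on the difference $(c-p)(F^*)-(c-p)(F')$). From ``second quantity $=0$'' solve for $\Delta$:
\[
\Delta=\frac{c(F^*)-c(F''')-\delta\cdot\mu(F^*\setminus F''')}{\mu(F^*)-\mu(F''')},
\]
which is legitimate since $\mu(F^*)-\mu(F''')\neq 0$. Substituting this into the first quantity and using $\mu(F^*)-\mu(F')\neq 0$, the condition ``first quantity $>0$'' rearranges (dividing by the positive $\mu(F^*)-\mu(F')$ or by $-(\mu(F^*)-\mu(F'''))$, keeping track of signs carefully since $\mu(F^*)-\mu(F''')<0$) into
\[
\frac{c(F^*)-c(F')-\delta\cdot\mu(F^*\setminus F')}{\mu(F^*)-\mu(F')}>\frac{c(F^*)-c(F''')-\delta\cdot\mu(F^*\setminus F''')}{\mu(F^*)-\mu(F''')}.
\]
Bringing the $\delta$ terms together and isolating $\delta$ yields precisely $\delta<f_7(c,F',F''')$, where $f_7$ is as defined in item~(f7); the denominator appearing there is nonzero because $\mu(F^*\setminus F')/(\mu(F^*)-\mu(F'))$ and $\mu(F^*\setminus F''')/(\mu(F^*)-\mu(F'''))$ have opposite behaviour under the size assumptions (the first is $\ge 1$ while $\mu(F^*)-\mu(F''')<0$), matching the way $f_7$ is used elsewhere. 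This contradicts the hypothesis $\delta\ge f_7(c,F',F''')$. The second forbidden pattern (first quantity $=0$, second quantity $<0$ on costs, i.e.\ $(c-p)(F^*)-(c-p)(F''')>0$... actually $>$ on costs means the difference is negative) is handled by the symmetric computation: solve the ``$=0$'' relation now for $\Delta$ from the $F'$ equation, substitute into the $F'''$ expression, and the resulting inequality again collapses to $\delta<f_7(c,F',F''')$, the same contradiction.

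The main obstacle I anticipate is bookkeeping of signs: because $\mu(F^*)-\mu(F''')$ is negative, every time one clears a denominator or compares two fractions the direction of the inequality flips, and it is easy to land on $\delta>f_7$ instead of $\delta<f_7$ by mistake. I would guard against this by doing the algebra symmetrically --- writing both quantities with a common structure, multiplying the first-quantity inequality through by $\big(\mu(F^*)-\mu(F')\big)\big(\mu(F^*)-\mu(F''')\big)$ (whose sign is negative, so flipping once cleanly) --- and by sanity-checking the final form of $f_7$ against its appearance in Case~\hyperlink{3.1.1}{3.1.1} of the correctness proof, where exactly this combination of fractions arises with the same sign conventions. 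Everything else is a routine substitution that mirrors computations already carried out in the paper, so once the sign conventions are pinned down the proof is short.
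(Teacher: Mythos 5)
Your approach matches the paper's proof exactly: assume the first forbidden pattern, solve the equality $(c-p)(F^*)=(c-p)(F''')$ for $\Delta$, feed that into the strict inequality from $F'$, and collapse to $\delta<f_7(c,F',F''')$, contradicting the hypothesis; the second pattern is symmetric. One small wobble: in your parenthetical for the second pattern you say ``$>$ on costs means the difference is negative,'' which is backwards --- $(c-p)(F^*)>(c-p)(F''')$ gives $(c-p)(F^*)-(c-p)(F''')>0$ --- but your subsequent description of what to do (solve the $F'$ equality for $\Delta$, substitute into the $F'''$ strict inequality, and note that dividing by the negative $\mu(F^*)-\mu(F''')$ flips the direction so one lands on the same inequality $\delta<f_7$) is correct, so this does not damage the argument.
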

\begin{proof}
First, suppose to the contrary that 
\[ (c - \p{\delta, \Delta}{\ell, u}{w})(F') < (c - \p{\delta, \Delta}{\ell, u}{w})(F^*) = (c - \p{\delta, \Delta}{\ell, u}{w})(F'''). \]
Then
\begin{align*}
0 
{}&{}< 
(c - \p{\delta, \Delta}{\ell, u}{w})(F^*) - (c - \p{\delta, \Delta}{\ell, u}{w})(F') \\[2pt]
{}&{}= 
c(F^*) - c(F') - \delta \cdot \mu(F^*\setminus F') - \Delta \cdot \big( \mu(F^*) - \mu(F') \big),
\end{align*}
and
\begin{align*}
0 
{}&{}= 
(c - \p{\delta, \Delta}{\ell, u}{w})(F^*) - (c - \p{\delta, \Delta}{\ell, u}{w})(F''') \\[2pt]
{}&{}= 
c(F^*) - c(F''') - \delta \cdot \mu(F^*\setminus F''') - \Delta \cdot \big( \mu(F^*) - \mu(F''') \big).
\end{align*}
Therefore, we get
\begin{align*}
\frac{c(F^*) - c(F''') - \delta \cdot \mu(F^*\setminus F''')}{\mu(F^*) - \mu(F''')} &= \Delta,\ \text{and} \\[2pt]
\frac{c(F^*) - c(F') - \delta \cdot \mu(F^*\setminus F')}{\mu(F^*) - \mu(F')} &>\Delta.
\end{align*}
Using this, we get
\begin{align*}
\delta 
< 
\frac{\hphantom{x} \displaystyle \frac{c(F^*) - c(F')}{\mu(F^*) - \mu(F')} - \frac{c(F^*) - c(F''')}{\mu(F^*) - \mu(F''')} \hphantom{x}}{\displaystyle \frac{\mu(F^*\setminus F')}{\mu(F^*) - \mu(F')} - \frac{\mu(F^*\setminus F''')}{\mu(F^*) - \mu(F''')}}= 
f_7(c, F', F''') ,
\end{align*}
a contradiction.

The second half of the lemma can be proved analogously.
\end{proof}

\subsection{Proof of Lemma~\ref{lemma:step1}}

\begin{proof}[Proof of Lemma~\ref{lemma:step1}]
Without loss of generality, we may assume $Y_{i_1+1} = F_{i_1}$ and $Y_{i_2+1} = F_{i_2}$.

First, we show $c_{i_2}(F^*) \le c_{i_2}(Y_{i_1+1})$. By Lemma~\ref{lemma:inv_min_cost_span_correction} and Claim~\ref{claim:inv_min_cost_span_nonneg_of_delta},
\begin{align*}
c_{i_2}(F^*) - c_{i_2}(Y_{i_1+1})
{}&{}= 
c_{i_1+1}(F^*) - c_{i_1+1}(Y_{i_1+1}) - (\delta_{i_1+2} + \ldots + \delta_{i_2}) \cdot \mu(F^*\setminus Y_{i_1+1}) \\[2pt]
{}&{}~~- (\Delta_{i_1+2} + \ldots + \Delta_{i_2}) \cdot \big( \mu(F^*) - \mu(Y_{i_1+1}) \big) \\[2pt]
{}&{}= 
0 - (\delta_{i_1+2} + \ldots + \delta_{i_2}) \cdot \mu(F^* \setminus Y_{i_1+1}) - (\Delta_{i_1+2} + \ldots + \Delta_{i_2}) \cdot 0\\[2pt]
{}&{}\le
0 .
\end{align*}

Now suppose to the contrary that $\mu( Y_{i_1+1} \cap F^*) \ge \mu( Y_{i_2+1} \cap F^*)$. Then by the above observation and by Claim~\ref{claim:inv_min_cost_span_nonneg_of_delta}, we get
\begin{align*}
0 
{}&{}< 
c_{i_2}(F^*) - c_{i_2}(F_{i_2}) \\[2pt]
{}&{}= 
c_{i_2}(F^*) - c_{i_2}(Y_{i_2+1}) \\[2pt]
{}&{}\le 
c_{i_2}(Y_{i_1+1}) - c_{i_2}(Y_{i_2+1}) \\[2pt]
{}&{}= 
c_{i_1}(Y_{i_1+1}) - c_{i_1}(Y_{i_2+1})- (\delta_{i_1+1} + \ldots + \delta_{i_2}) \cdot \big( \mu( Y_{i_1+1} \cap F^*) - \mu(Y_{i_2+1} \cap F^*) \big) \\[2pt]
{}&{}~~ - (\Delta_{i_1+1} + \ldots + \Delta_{i_2}) \cdot \big( \mu(Y_{i_1+1}) - \mu(Y_{i_2+1}) \big) \\[2pt]
{}&{}= 
c_{i_1}(F_{i_1}) - c_{i_1}(Y_{i_2+1})- (\delta_{i_1+1} + \ldots + \delta_{i_2}) \cdot \big( \mu( Y_{i_1+1} \cap F^*) - \mu(Y_{i_2+1} \cap F^*) \big) \\[2pt]
{}&{}~~ - \left( \Delta_{i_1+1} + \ldots + \Delta_{i_2} \right) \cdot 0\\[2pt]
{}&{}\le 
0,
\end{align*}
a contradiction.
\end{proof}

\subsection{Proof of Lemma~\ref{lemma:step2}}

\begin{proof}[Proof of Lemma~\ref{lemma:step2}]
 We prove \ref{lobab:a}, the proof of \ref{lobab:b} is analogous. Suppose to the contrary that there exist two indices $i_1$ and $i_2$ with $i_1 < i_2$ satisfying the conditions of part \ref{lobab:a} of the lemma. Then, by Claim~\ref{claim:inv_min_cost_span_notsamesize}, we have $c(F_{i_1}) = c(F_{i_2})$. Moreover, since $\big (\mu(F_{i_1}),\mu(Z_{i_1}),\mu(F_{i_1}\cap F^*),\mu(Z_{i_1} \cap F^*)\big)=\big (\mu(F_{i_2}),\mu(Z_{i_2}),\mu(F_{i_2}\cap F^*),\mu(Z_{i_2} \cap F^*)\big)$, it also follows that $c_i(F_{i_1}) = c_i(F_{i_2})$ for any $i$ by the assumption that \hyperlink{spec-lu}{(SPEC-LU)} holds. This implies $f_7(c_i, F_{i_1}, Z_{i_1}) = f_7(c_i, F_{i_2}, Z_{i_2})$ for any~$i$. By Claims~\ref{claim:inv_min_cost_span_nonneg_of_delta} and~\ref{claim:inv_min_cost_span_forgetting}, we get 
 \[ d_{i_2} = \delta_0 + \ldots + \delta_{i_2} \ge \delta_{i_1+1} \ge f_7(c_{i_1},F_{i_1}, Z_{i_1}) = f_7(c_{i_1},F_{i_2}, Z_{i_2}). \]
 In addition, by Claim~\ref{claim:inv_min_cost_span_correction_for_a_pair_v2}, we have $c_{i_2}(F^*) = c_{i_2}(Z_{i_2})$. Thus, applying Claim~\ref{claim:inv_min_cost_span_pairs_corrected_once} for $F'\coloneqq F_{i_2}$, $F'''\coloneqq Z_{i_2}$, $\delta\coloneqq d_{i_2}$ and $\Delta\coloneqq D_{i_2}$, we get that $c_{i_2}(F_{i_2}) < c_{i_2}(F^*) = c_{i_2}(Z_{i_2})$ cannot occur, contradicting the fact that $F_{i_2}$ is a minimum $c_{i_2}$-cost member of $\mathcal{F}$ while $F^*$ is not.
\end{proof}

\subsection{Proof of Lemma~\ref{lemma:step3}}

\begin{proof}[Proof of Lemma~\ref{lemma:step3}]
Suppose to the contrary that $\mu(F_i) = \mu(F_{i+1})$ and $\mu(F_i \cap F^*) \ge \mu(F_{i+1} \cap F^*)$. Then by Claim~\ref{claim:inv_min_cost_span_nonneg_of_delta}, we get
\begin{align*}
0 
{}&{}\le 
c_{i+1}(F_i) - c_{i+1}(F_{i+1}) \\[2pt]
{}&{}= 
c_i(F_i) - c_i(F_{i+1}) - \delta_{i+1} \cdot \big( \mu(F_i \cap F^*) - \mu(F_{i+1} \cap F^*) \big) - \Delta_{i+1} \cdot \big( \mu(F_i) - \mu(F_{i+1}) \big) \\[2pt]
{}&{}= 
c_i(F_i) - c_i(F_{i+1}) - \delta_{i+1} \cdot \big( \mu(F_i \cap F^*) - \mu(F_{i+1} \cap F^*) \big) - \Delta_{i+1} \cdot 0 \\[2pt]
{}&{}< 
0,
\end{align*}
a contradiction.
\end{proof}

\subsection{Proof of Lemma~\ref{lemma:inv_min_cost_span_opt_dev_vector_v2}}

\begin{proof}[Proof of Lemma~\ref{lemma:inv_min_cost_span_opt_dev_vector_v2}]
Recall that we assumed \hyperlink{spec-lu}{(SPEC-LU)} to hold. By Corollary~\ref{cor:inv_min_cost_span_opt_dev_vector}, there exist $\delta', \Delta' \in \mathbb{R}$ with $\delta' + \Delta' \ge \lin$ and $\Delta' \le \uout$ for which $\p{\delta', \Delta'}{\ell, u}{w}$ is an optimal deviation vector. 

If $\delta' + \Delta' \le \uin$ and $\Delta' \ge \lout$, then set $\delta\coloneqq\delta'$ and $\Delta\coloneqq\Delta'$. If $\delta' + \Delta' \le \uin$ and $\Delta' < \lout$, then set $\delta \coloneqq  \delta' + \Delta' - \lout$ and $\Delta \coloneqq  \lout$. If $\delta' + \Delta' > \uin$ and $\Delta' \ge \lout$, then set $\delta \coloneqq  \uin - \Delta'$ and $\Delta \coloneqq  \Delta'$. Finally,  if $\delta' + \Delta' > \uin$ and $\Delta' < \lout$, then set $\delta \coloneqq  \uin - \lout$ and $\Delta \coloneqq  \lout$. It is not difficult to check that in all cases, we get that $\p{\delta, \Delta}{\ell, u}{w} = \p{\delta', \Delta'}{\ell, u}{w}$, $\lin \le \delta + \Delta \le \uin$ and $\lout \le \Delta \le \uout$ hold.
\end{proof}

\subsection{Proof of Lemma~\ref{lemma:inv_min_cost_span_deltas_stay_in_the_intervals}}

\begin{proof}[Proof of Lemma~\ref{lemma:inv_min_cost_span_deltas_stay_in_the_intervals}]
Recall that we assumed \hyperlink{spec-lu}{(SPEC-LU)} to hold. We prove the lemma by induction on $i$. 

First we show that the statements hold for $i=0$. If $\lin \ne - \infty$ and $\uout \ne + \infty$, then there are two cases. If $\uout \le \lin$, then $d_0 = \delta_0 = \lin - \uout$ and $D_0 = \Delta_0 = \uout$, thus $\lin = d_0 + D_0 \le \uin$ and $\lout \le D_0 = \uout$. If $\uout \ge \lin$, then $d_0 = \delta_0 = 0$ and $D_0 = \Delta_0 = \uout$, thus $\lin \le d_0 + D_0 = \uout \le \uin$ and $\lout \le D_0 = \uout$.  If $\lin = - \infty$ and $\uout \ne + \infty$, then $d_0 = \delta_0 = 0$ and $D_0 = \Delta_0 = \uout$, thus $\lin \le d_0 + D_0 = \uout \le \uin$ and $\lout \le D_0 = \uout$. If $\lin \ne - \infty$ and $\uout = + \infty$, then $d_0 = \delta_0 = 0$ and $D_0 = \Delta_0 = \lin$, thus $\lin = d_0 + D_0 \le \uin$ and $\lout \le \lin = D_0 \le \uout$. If $\lin = - \infty$ and $\uout = + \infty$, then $d_0 = \delta_0 = 0$ and $D_0 = \Delta_0 = 0$, thus $\lin \le d_0 + D_0 \le \uout \le \uin$ and $\lout \le \lin \le D_0 \le \uout$. 

Now let $i$ be arbitrary and assume that $\lin \le d_i + D_i \le \uin$ and $\lout \le D_i \le \uout$ and that Algorithm~\ref{algo:inv_min_cost_span} does not declare the problem to be infeasible in the $i$th step. We distinguish the different cases the $i$th step might belong to.
\medskip

\noindent \textbf{Case~\hyperlink{1.1}{1.1}.} We have $d_{i+1} + D_i = d_i + \delta_{i+1} + D_i = d_i + f_1(c_i, F_i) + D_i \le \uin$ and $\Delta_{i+1} = 0$, hence we are done by  Claim~\ref{claim:inv_min_cost_span_nonneg_of_delta} and by the induction hypothesis.
\medskip

\noindent \textbf{Case~\hyperlink{1.2.1}{1.2.1}.} We have $d_{i+1} + D_i = d_i + \delta_{i+1} + D_i = d_i + f_1(c_i, F_i) + D_i > \uin$ and $D_{i+1} \ge \lout$. Then $\Delta_{i+1} = f_2(c_i, d_i, D_i, F_i) = \uin - d_i - D_i - f_1(c_i, F_i) = \uin - d_{i+1} - D_i < 0$. Thus we obtain $d_{i+1} + D_{i+1} = d_{i+1} + D_i + \Delta_{i+1} = \uin$ and $D_{i+1} = D_i + \Delta_{i+1} < D_i$, hence we are done by the induction hypothesis.
\medskip

\noindent \textbf{Case~\hyperlink{2.1.1}{2.1.1}.} We have $D_{i+1} = D_i + \Delta_{i+1} = D_i + f_3(c_i, F_i) \le \uout$ and $d_i + D_{i+1} = d_i + D_i + \Delta_{i+1} = d_i + D_i + f_3(c_i, F_i) \le \uin$.
Since $\delta_{i+1} = 0$ and
\[ \Delta_{i+1} = f_3(c_i, F_i) = \frac{c_i(F^*) - c_i(F_i)}{\mu(F^*) - \mu(F_i)} > 0, \]
we are done by the induction hypothesis.
\medskip

\noindent \textbf{Case~\hyperlink{2.1.2.1}{2.1.2.1}.} We have $D_{i+1} = D_i + \Delta_{i+1} = D_i + f_4(c_i, d_i, D_i, F_i) \ge \lout$ and
\begin{align*}
d_{i+1} + D_{i+1} 
{}&{}= 
d_i + \delta_{i+1} + D_i + \Delta_{i+1}\\[2pt]
{}&{}= 
d_i + f_5(c_i, d_i, D_i, F_i) + D_i + f_4(c_i, d_i, D_i, F_i) \\[2pt]
{}&{}= 
d_i + D_i + (\uin - d_i - D_i) \cdot \frac{\mu(F^* \setminus F_i) - \big( \mu(F^*) - \mu(F_i) \big)}{\mu(F_i \setminus F^*)}\\[2pt]
{}&{}= 
\uin .
\end{align*}
In addition,
\begin{align*}
\Delta_{i+1} 
{}&{}= 
f_4(c_i, d_i, D_i, F_i) \\[2pt]
{}&{}= 
\frac{- f_3(c_i, F_i) \cdot \big( \mu(F^*) - \mu(F_i) \big) + \big( \uin - d_i - D_i \big) \cdot \mu(F^* \setminus F_i)}{\mu(F_i \setminus F^*)} \\[2pt]
{}&{}= 
f_3(c_i, F_i) - \frac{\big( f_3(c_i, F_i) - (\uin - d_i - D_i) \big) \cdot \mu(F\setminus F_i)}{\mu(F_i \setminus F)}\\[2pt]
{}&{}< 
f_3(c_i, F_i) ,
\end{align*}
so $D_{i+1} = D_i + \Delta_{i+1} < D_i + f_3(w_i, F_i) \le \uout$.
\medskip

\noindent \textbf{Case~\hyperlink{2.2.1}{2.2.1}.} We have $D_{i+1} = D_i + \Delta_{i+1} = D_i + (\uout - D_i) = \uout$, and thus $d_{i+1} + D_{i+1} = d_i + \delta_{i+1} + \uout = d_i + f_6(c_i, d_i, F_i) + \uout \le \uin$. Therefore, we are done by Claim~\ref{claim:inv_min_cost_span_nonneg_of_delta} and by the induction hypothesis.
\medskip

\noindent \textbf{Cases~\hyperlink{2.2.2.1}{2.2.2.1} and~\hyperlink{3.2.2.1}{3.2.2.1}.} We have $D_{i+1} = D_i + \Delta_{i+1} = D_i + f_4(c_i, d_i, D_i, F_i) \ge \lout$.
Also, note that
\begin{align*}
\uin - d_i - D_i 
{}&{}< f_6(w_i, d_i, F_i) + \uout - D_i \\[2pt]
{}&{}= 
\frac{c_i(F^*) - c_i(F_i) - (\uout - D_i) \cdot \big( \mu(F^*) - \mu(F_i) \big)}{\mu(F^* \setminus F_i)} + \uout - D_i \\[2pt]
{}&{}= 
\frac{c_i(F^*) - c_i(F_i)}{\mu(F^* \setminus F_i)} + (\uout - D_i) \left( - \, \frac{\mu(F^*) - \mu(F_i)}{\mu(F^* \setminus F_i)} + 1 \right) \\[2pt]
{}&{}=
\frac{c_i(F^*) - c_i(F_i) + (\uout - D_i) \cdot \mu\left( F_i \setminus F^* \right)}{\mu\left( F^* \setminus F_i \right)} .
\end{align*}
Thus, we have
\begin{align*}
D_{i+1} 
{}&{}= 
D_i + \Delta_{i+1}\\[2pt]
{}&{}= 
D_i + \frac{c_i(F_i) - c_i(F^*) + (\uin - d_i - D_i) \cdot \mu(F^* \setminus F_i)}{\mu(F_i \setminus F^*)} \\[2pt]
{}&{}< 
D_i + \frac{c_i(F_i) - c_i(F^*)}{\mu(F_i \setminus F^*)}+ \frac{c_i(F^*) - c_i(F_i) + (\uout - D_i) \cdot \mu(F_i \setminus F^*)}{\mu(F_i \setminus F^*)} \\
{}&{}= 
\uout,
\end{align*}
and
\begin{align*}
d_{i+1} + D_{i+1}
{}&{}=
d_i + \delta_{i+1} + D_i + \Delta_{i+1}\\[2pt] 
{}&{}= d_i + f_5(c_i, d_i, D_i, F_i) + D_i + f_4(c_i, d_i, D_i, F_i) \\[2pt]
{}&{}= 
d_i + \frac{c_i(F^*) - c_i(F_i) - (\uin - d_i - D_i) \cdot \big( \mu(F^*) - \mu(F_i) \big)}{\mu(F_i \setminus F^*)} \\[2pt]
{}&{}~~~~+ D_i + \frac{c_i(F_i) - c_i(F^*) + (\uin - d_i - D_i) \cdot \mu(F^* \setminus F_i)}{\mu(F_i \setminus F^*)} \\[2pt]
{}&{}= 
d_i + D_i + (\uin - d_i - D_i) \cdot \frac{- \big( \mu(F^*) - \mu(F_i) \big) + \mu(F^* \setminus F_i)}{\mu(F_i \setminus F^*)}\\[2pt]
{}&{}= 
\uin .
\end{align*}

\noindent \textbf{Case~\hyperlink{3.1.1}{3.1.1}.} We have $D_{i+1} = D_i + \Delta_{i+1} = D_i + f_8(c_i, F_i, Z_i) \le \uout$ and $d_{i+1} + D_{i+1} = d_i + \delta_{i+1} + D_i + \Delta_{i+1} = d_i + f_7(c_i, F_i, Z_i) + D_i + f_8(c_i, F_i, Z_i) \le \uin$. By Claim~\ref{claim:inv_min_cost_span_reformulating_Delta_iplusone} and Claim~\ref{claim:inv_min_cost_span_correction_for_a_pair_v2},
\[ \Delta_{i+1} = \frac{- f_7(c_i, F_i, Z_i) \cdot \mu(F^* \setminus Z_i)}{\mu(F^*) - \mu(Z_i)} \ge 0 , \]
so we are done by Claim~\ref{claim:inv_min_cost_span_nonneg_of_delta} and by the induction hypothesis.
\medskip

\noindent \textbf{Case~\hyperlink{3.1.2.1}{3.1.2.1}.} We have $D_{i+1} = D_i + \Delta_{i+1} = D_i + f_4(c_i, d_i, D_i, F_i) \ge \lout$ and
\begin{align*}
d_{i+1} + D_{i+1}
{}&{}= 
d_i + \delta_{i+1} + D_i + \Delta_{i+1}\\[2pt] 
{}&{}= d_i + f_5(c_i, d_i, D_i, F_i) + D_i + f_4(c_i, d_i, D_i, F_i) \\[2pt]
{}&{}= 
d_i + \frac{c_i(F^*) - c_i(F_i) - (\uin - d_i - D_i) \cdot \big( \mu(F^*) - \mu(F_i) \big)}{\mu(F_i \setminus F^*)}\\[2pt] 
{}&{}~~ + D_i+ \frac{c_i(F_i) - c_i(F^*) + (\uin - d_i - D_i) \cdot \mu(F^* \setminus F_i)}{\mu(F_i \setminus F^*)} \\[2pt]
{}&{}= 
d_i + D_i + (\uin - d_i - D_i) \cdot \frac{- \big( \mu(F^*) - \mu(F_i) \big) + \mu(F^* \setminus F_i)}{\mu(F_i \setminus F^*)}\\[2pt]
{}&{}= 
\uin .
\end{align*}
In addition,
\begin{align*}
\Delta_{i+1} 
{}&{}= 
f_4(c_i, d_i, D_i, F_i)\\[2pt]
{}&{}= 
\frac{c_i(F_i) - c_i(F^*) + (\uin - d_i - D_i) \cdot \mu(F^* \setminus F_i)}{\mu(F_i \setminus F^*)} \\[2pt]
{}&{}= 
\frac{- f_3(c_i, F_i) \cdot \big( \mu(F^*) - \mu(F_i) \big) + (\uin - d_i - D_i) \cdot \mu(F^* \setminus F_i)}{\mu(F_i \setminus F^*)} \\[2pt]
{}&{}< 
\frac{- f_3(c_i, F_i) \cdot \big( \mu(F^*) - \mu(F_i) \big)}{\mu(F_i \setminus F^*)} +\frac{\big( f_7(c_i, F_i, Z_i) + f_8(c_i, F_i, Z_i) \big)\cdot \mu(F^* \setminus F_i)}{\mu(F_i \setminus F^*)}\\[2pt]
{}&{}= 
f_3(c_i, F_i) + \frac{\big( f_7(c_i, F_i, Z_i) + f_8(c_i, F_i, Z_i) - f_3(c_i, F_i) \big) \cdot \mu(F^* \setminus F_i)}{\mu(F_i \setminus F^*)} \\[2pt]
{}&{}= 
f_3(c_i, F_i) + f_7(c_i, F_i, Z_i) \cdot \left( 1 - \frac{\mu(F^* \setminus F_i)}{\mu(F^*) - \mu(F_i)} \right) \cdot \frac{\mu(F^* \setminus F_i)}{\mu(F_i \setminus F^*)} \\[2pt]
{}&{}= 
f_3(c_i, F_i) + f_7(c_i, F_i, Z_i) \cdot \frac{\mu(F^* \setminus F_i)}{\mu(F^*) - \mu(F_i)}\\[2pt]
{}&{}= 
f_8(c_i, F_i, Z_i).
\end{align*}
Thus we get $D_{i+1} = D_i + \Delta_{i+1} < D_i + f_8(c_i, F_i, Z_i) \le \uout$.
\medskip

\noindent \textbf{Case~\hyperlink{3.2.1}{3.2.1}.} We have $D_{i+1} = D_i + \Delta_{i+1} = D_i + (\uout - D_i) = \uout$, and so $d_{i+1} + D_{i+1} = d_i + \delta_{i+1} + \uout = d_i + f_6(c_i, D_i, F_i) + \uout \le \uin$. In addition, by the induction hypothesis, we obtain $\Delta_{i+1} = \uout - D_i \ge 0$. Thus we are done by Claim~\ref{claim:inv_min_cost_span_nonneg_of_delta} and by the induction hypothesis.
\medskip

\noindent \textbf{Case~\hyperlink{4.1.1}{4.1.1}.} We have $d_{i+1} + D_{i+1} = d_i + \delta_{i+1} + D_i + \Delta_{i+1} = d_i + 0 + D_i + f_3(c_i, F_i) \ge \lin$ and $D_{i+1} = D_i + \Delta_{i+1} = D_i + f_3(c_i, F_i) \ge \lout$. Since $\delta_{i+1} = 0$ and 
\[ \Delta_{i+1} = f_3(c_i, F_i) = \frac{c_i(F^*) - c_i(F_i)}{\mu(F^*) - \mu(F_i)} < 0 , \]
we are done by the induction hypothesis.
\medskip

\noindent \textbf{Case~\hyperlink{4.1.2.1}{4.1.2.1}.} We have $D_{i+1} = D_i + \Delta_{i+1} = D_i + (\lout - D_i) = \lout$ and $d_{i+1} + D_{i+1} = d_i + \delta_{i+1} + \lout = d_i + f_9(c_i, D_i, F_i) + \lout \le \uin$. In addition,
\begin{align*}
d_{i+1} + D_{i+1}
{}&{}= 
d_i + \delta_{i+1} + D_i + \Delta_{i+1} = d_i + f_9(c_i, D_i, F_i) + D_i + (\lout - D_i) \\
{}&{}= 
d_i + \frac{c_i(F^*) - c_i(F_i) - (\lout - D_i) \cdot \big( \mu(F^*) - \mu(F_i) \big)}{\mu(F^* \setminus F_i)} + D_i + (\lout - D_i) \\[2pt]
{}&{}= 
d_i + D_i + f_3(c_i, F_i) \cdot \frac{\mu(F^*) - \mu(F_i)}{\mu(F^* \setminus F_i)}+ (\lout - D_i) \cdot \left( - \frac{\mu(F^*) - \mu(F_i)}{\mu(F^* \setminus F_i)} + 1 \right) \\[2pt]
{}&{}= 
d_i + D_i + f_3(c_i, F_i) \cdot \frac{\mu(F^*) - \mu(F_i)}{\mu(F^* \setminus F_i)} + (\lout - D_i) \cdot \frac{\mu(F_i \setminus F^*)}{\mu(F^* \setminus F_i)} \\[2pt]
{}&{}>
d_i + D_i + f_3(c_i, F_i) \cdot \frac{\mu(F^*) - \mu(F_i)}{\mu(F^* \setminus F_i)} + f_3(c_i, F_i) \cdot \frac{\mu(F_i \setminus F^*)}{\mu(F^* \setminus F_i)} \\[2pt]
{}&{}= 
d_i + D_i + f_3(c_i, F_i) \cdot \left( \frac{\mu(F^*) - \mu(F_i)}{\mu(F^* \setminus F_i)} + \frac{\mu(F_i \setminus F^*)}{\mu(F^* \setminus F_i)} \right) \\[2pt]
{}&{}= 
d_i + D_i + f_3(c_i, F_i)\\[2pt]
{}&{}\ge 
\lin .
\end{align*}

\noindent \textbf{Case~\hyperlink{4.2.1}{4.2.1}.} We have $D_{i+1} = D_i + \Delta_{i+1} = D_i + f_{10}(c_i, d_i, D_i, F_i) \ge \lout$ and
\begin{align*}
d_{i+1} + D_{i+1}
{}&{}= 
d_i + \delta_{i+1} + D_i + \Delta_{i+1}\\[2pt]
{}&{}= 
d_i + f_{11}(c_i, d_i, D_i, F_i) + D_i + f_{10}(c_i, d_i, D_i, F_i) \\[2pt]
{}&{}= 
d_i + \frac{c_i(F^*) - c_i(F_i) - (\lin - d_i - D_i) \cdot \big( \mu(F^*) - \mu(F_i) \big)}{\mu(F_i \setminus F^*)} \\[2pt]
{}&{}~~+ D_i+ \frac{c_i(F_i) - c_i(F^*) + (\lin - d_i - D_i) \cdot \mu(F^* \setminus F_i)}{\mu(F_i \setminus F^*)} \\[2pt]  
{}&{}= 
d_i + D_i + (\lin - d_i - D_i) \cdot \frac{ - \big( \mu(F^*) - \mu(F_i) \big) + \mu(F^* \setminus F_i)}{\mu(F_i \setminus F^*)}\\[2pt]
{}&{}= 
\lin .
\end{align*}
In addition, by the induction hypothesis,
\begin{align*}
\Delta_{i+1} 
{}&{}= 
f_{10}(c_i, d_i, D_i, F_i) \\[2pt]
{}&{}= 
\frac{- f_3(c_i, F_i) \cdot \big( \mu(F^*) - \mu(F_i) \big) + (\lin - d_i - D_i) \cdot \mu(F^* \setminus F_i)}{\mu(F_i \setminus F^*)} \\[2pt]
{}&{}< 
\frac{- (\lin - d_i - D_i) \cdot \big( \mu(F^*) - \mu(F_i) \big)}{\mu(F_i \setminus F^*)}+
\frac{(\lin - d_i - D_i) \cdot \mu(F^* \setminus F_i)}{\mu(F_i \setminus F^*)} \\[2pt]
{}&{}= 
(\lin - d_i - D_i) \cdot \left( \frac{- \big( \mu(F^*) - \mu(F_i) \big) + \mu(F^* \setminus F_i)}{\mu(F_i \setminus F^*)} \right) \\[2pt]
{}&{}= 
\lin - d_i - D_i\\[2pt]
{}&{}\le 0 ,
\end{align*}
so we are done by the induction hypothesis.
\medskip

\noindent \textbf{Case~\hyperlink{4.2.2.1}{4.2.2.1}.} We have $D_{i+1} = D_i + \Delta_{i+1} = D_i + (\lout - D_i) = \lout$
and $d_{i+1} + D_{i+1} = d_i + \delta_{i+1} + \lout = d_i + f_9(c_i, D_i, F_i) + \lout \le \uin$. Note that
\begin{align*}
\lout - D_i 
{}&{}> 
f_{10}(c_i, d_i, D_i, F_i)= 
\frac{- f_3(c_i, F_i) \cdot \big( \mu(F^*) - \mu(F_i) \big) + (\lin - d_i - D_i) \cdot \mu(F^* \setminus F_i)}{\mu(F_i \setminus F^*)}.
\end{align*}
Thus, we have
\begin{align*}
d_{i+1} + D_{i+1}
{}&{}= 
d_i + \delta_{i+1} + D_i + \Delta_{i+1} \\[2pt] 
{}&{}=
d_i + f_9(c_i, D_i, F_i) + D_i + (\lout - D_i) \\[2pt]
{}&{}= 
d_i + \frac{c_i(F^*) - c_i(F_i) - (\lout - D_i) \cdot \big( \mu(F^*) - \mu(F_i) \big)}{\mu(F^* \setminus F_i)} + D_i + (\lout - D_i) \\[2pt]
{}&{}= 
d_i + D_i + f_3(c_i, F_i) \cdot \frac{\mu(F^*) - \mu(F_i)}{\mu(F^* \setminus F_i)} + (\lout - D_i) \cdot \left( - \frac{\mu(F^*) - \mu(F_i)}{\mu(F^* \setminus F_i)} + 1 \right) \\[2pt]
{}&{}= 
d_i + D_i + f_3(c_i, F_i) \cdot \frac{\mu(F^*) - \mu(F_i)}{\mu(F^* \setminus F_i)} + (\lout - D_i) \cdot \frac{\mu(F_i \setminus F^*)}{\mu(F^* \setminus F_i)} \\[2pt]
{}&{}> 
d_i + D_i + f_3(c_i, F_i) \cdot \frac{\mu(F^*) - \mu(F_i)}{\mu(F^* \setminus F_i)} \\[2pt]
{}&{}~~+ \frac{- f_3(c_i, F_i) \cdot \big( \mu(F^*) - \mu(F_i) \big) + (\lin - d_i - D_i) \cdot \mu(F^* \setminus F_i)}{\mu(F_i \setminus F^*)}\cdot \frac{\mu( F_i \setminus F^*)}{\mu(F^* \setminus F_i)}\\[2pt]
{}&{}= d_i + D_i + (\lin - d_i - D_i)\\[2pt]
{}&{}= \lin .
\end{align*}

\noindent \textbf{Case~\hyperlink{5.1.1}{5.1.1}.} We have $d_{i+1} + D_{i+1} = d_i + \delta_{i+1} + D_i + \Delta_{i+1} = d_i + f_7(c_i, X_i, F_i) + D_i + f_{12}(c_i, X_i, F_i) \ge \lin$ and $D_{i+1} = D_i + \Delta_{i+1} = D_i + f_{12}(c_i, X_i, F_i) \ge \lout$. By Claim~\ref{claim:inv_min_cost_span_reformulating_Delta_iplusone} and Claim~\ref{claim:inv_min_cost_span_correction_for_a_pair_v2},
\begin{align*}
\Delta_{i+1} 
{}&{}= 
f_{12}(c_i, X_i, F_i)\\[2pt]
{}&{}= 
\frac{c_i(F^*) - c_i(X_i) - f_7(c_i, X_i, F_i) \cdot \mu(F^* \setminus X_i)}{\mu(F^*) - \mu(X_i)} \\[2pt]
{}&{}= 
\frac{- f_7(c_i, X_i, F_i) \cdot \mu(F^* \setminus X_i)}{\mu(F^*) - \mu(X_i)}\\[2pt]
{}&{} \le 
0.
\end{align*}
Thus, by the induction hypothesis, $D_{i+1} = D_i + \Delta_{i+1} \le D_i \le \uout$. In addition, by the above and by Claim~\ref{claim:inv_min_cost_span_nonneg_of_delta},
\begin{align*}
\delta_{i+1} + \Delta_{i+1} 
{}&{}= 
f_7(c_i, X_i, F_i) + f_{12}(c_i, X_i, F_i) \\[2pt]
{}&{}= 
f_7(c_i, X_i, F_i) + \frac{- f_7(c_i, X_i, F_i) \cdot \mu(F^* \setminus X_i)}{\mu(F^*) - \mu(X_i)} \\[2pt]
{}&{}= 
f_7(c_i, X_i, F_i) \cdot \left( 1 - \frac{\mu(F^* \setminus F_i)}{\mu(F^*) - \mu(F_i)} \right) \\[2pt]
{}&{}= 
f_7(c_i, X_i, F_i) \cdot \frac{\mu(F_i \setminus F^*)}{\mu(F^*) - \mu(F_i)}\\[2pt]
{}&{}\le 
0 ,
\end{align*}
so by the induction hypothesis, $d_{i+1} + D_{i+1} = d_i + D_i + (\delta_{i+1} + \Delta_{i+1}) \le d_i + D_i \le \uin$.
\medskip

\noindent \textbf{Case~\hyperlink{5.1.2.1}{5.1.2.1}.} We have $D_{i+1} = D_i + \Delta_{i+1} = D_i + (\lout - D_i) = \lout$ and $d_{i+1} + D_{i+1} = d_i + \delta_{i+1} + \lout = d_i + f_9(c_i, D_i, F_i) + \lout \le \uin$.
In addition,
\begin{align*}
&d_{i+1} + D_{i+1}\\
{}&{}~~= 
d_i + \delta_{i+1} + D_i + \Delta_{i+1} \\[2pt] 
{}&{}~~=
d_i + f_9(c_i, D_i, F_i) + D_i + (\lout - D_i) \\[2pt]
{}&{}~~= 
d_i + \frac{c_i(F^*) - c_i(F_i) - (\lout - D_i) \cdot \big( \mu(F^*) - \mu(F_i) \big)}{\mu(F^* \setminus F_i)} + D_i + (\lout - D_i) \\[2pt]
{}&{}~~= 
d_i + D_i + \frac{c_i(F^*) - c_i(F_i)}{\mu(F^* \setminus F_i)} + (\lout - D_i) \cdot \left( - \, \frac{\big( \mu(F^*) - \mu(F_i) \big)}{\mu(F^* \setminus F_i)} + 1 \right) \\[2pt]
{}&{}~~= 
d_i + D_i + \frac{c_i(F^*) - c_i(F_i)}{\mu(F^* \setminus F_i)} + (\lout - D_i) \cdot \frac{\mu(F_i \setminus F^*)}{\mu(F^* \setminus F_i)} \\[2pt]
{}&{}~~> 
d_i + D_i + \frac{c_i(F^*) - c_i(F_i)}{\mu(F^* \setminus F_i)} + f_{12}(c_i, X_i, F_i) \cdot \frac{\mu(F_i \setminus F^*)}{\mu(F^* \setminus F_i)} \\[2pt]
{}&{}~~= 
d_i + D_i + \frac{c_i(F^*) - c_i(F_i)}{\mu(F^* \setminus F_i)}+ \frac{c_i(F^*) - c_i(F_i) - f_7(c_i, X_i, F_i) \cdot \mu(F^* \setminus X_i)}{\mu(F^*) - \mu(F_i)} \cdot \frac{\mu(F_i \setminus F^*)}{\mu(F^* \setminus F_i)} \\[2pt]
{}&{}~~= 
d_i + D_i + \frac{c_i(F^*) - c_i(F_i)}{\mu(F^* \setminus F_i)} \cdot \left( 1 + \frac{\mu(F_i \setminus F^*)}{\mu(F^*) - \mu(F_i)} \right) - f_7(c_i, X_i, F_i) \cdot \frac{\mu(F_i \setminus F^*)}{\mu(F^*) - \mu(F_i)} \\[2pt]
{}&{}~~= 
d_i + D_i + \frac{c_i(F^*) - c_i(F_i)}{\mu(F^*) - \mu(F_i)} - f_7(c_i, X_i, F_i) \cdot \frac{\mu(F_i \setminus F^*)}{\mu(F^*) - \mu(F_i)} \\[2pt]
{}&{}~~= 
d_i + D_i + \frac{c_i(F^*) - c_i(F_i)}{\mu(F^*) - \mu(F_i)} - f_7(c_i, X_i, F_i) \cdot \left( \frac{\mu(F^* \setminus F_i)}{\mu(F^*) - \mu(F_i)} - 1 \right) \\[2pt]
{}&{}~~= 
d_i + D_i + f_{12}(c_i, X_i, F_i) + f_7(e_i, X_i, F_i)\\[2pt]
{}&{}~~\ge 
\lin .
\end{align*}

\noindent \textbf{Case~\hyperlink{5.2.1}{5.2.1}.} We have $D_{i+1} = D_i + \Delta_{i+1} = D_i + f_{10}(c_i, d_i, D_i, F_i) \ge \lout$. By the induction hypothesis,
\begin{equation*}
\Delta_{i+1} = f_{10}(c_i, d_i, D_i, F_i) = \frac{c_i(F_i) - c_i(F^*) + (\lin - d_i - D_i) \cdot \mu(F^* \setminus F_i)}{\mu(F_i \setminus F^*)} \le 0,
\end{equation*}
thus, again by the induction hypothesis, $D_{i+1} = D_i + \Delta_{i+1} \le D_i \le \uout$. In addition,
\begin{align*}
d_{i+1} + D_{i+1}
{}&{}= 
d_i + \delta_{i+1} + D_i + \Delta_{i+1} \\[2pt] 
{}&{}=
d_i + f_{11}(c_i, d_i, D_i, F_i) + D_i + f_{10}(c_i, d_i, D_i, F_i) \\[2pt]
{}&{}= 
d_i + \frac{c_i(F^*) - c_i(F_i) - (\lin - d_i - D_i) \cdot \big( \mu(F^*) - \mu(F_i) \big)}{\mu(F_i \setminus F^*)} \\[2pt]
{}&{}~~+ D_i + \frac{c_i(F_i) - c_i(F^*) + (\lin - d_i - D_i) \cdot \mu(F^* \setminus F_i)}{\mu(F_i \setminus F^*)} \\[2pt]
{}&{}= d_i + D_i + (\lin - d_i - D_i) \cdot \left( - \, \frac{\mu(F^*) - \mu(F_i)}{\mu(F_i \setminus F^*)} + \frac{\mu(F^* \setminus F_i)}{\mu(F_i \setminus F^*)} \right) \\[2pt]
{}&{}= d_i + D_i + (\lin - d_i - D_i)\\[2pt]
{}&{}= \lin .
\end{align*}

\noindent \textbf{Case~\hyperlink{5.2.2.1}{5.2.2.1}.} We have $D_{i+1} = D_i + \Delta_{i+1} = D_i + (\lout - D_i) = \lout$ and $d_{i+1} + D_{i+1} = d_i + \delta_{i+1} + \lout = d_i + f_9(c_i, D_i, F_i) + \lout \le \uin$. In addition, 
\begin{align*}
&d_{i+1} + D_{i+1}\\[2pt]
{}&{}~~= 
d_i + \delta_{i+1} + D_i + \Delta_{i+1} \\[2pt] 
{}&{}~~=
d_i + f_9(c_i, D_i, F_i) + D_i + (\lout - D_i) \\[2pt]
{}&{}~~= 
d_i + \frac{c_i(F^*) - c_i(F_i) - (\lout - D_i) \cdot \big( \mu(F^*) - \mu(F_i) \big)}{\mu(F^* \setminus F_i)} + D_i + (\lout - D_i) \\[2pt]
{}&{}~~= 
d_i + D_i + \frac{c_i(F^*) - c_i(F_i)}{\mu(F^* \setminus F_i)} + (\lout - D_i) \cdot \left( - \, \frac{\mu(F^*) - \mu(F_i)}{\mu(F^* \setminus F_i)} + 1 \right) \\[2pt]
{}&{}~~= 
d_i + D_i + \frac{c_i(F^*) - c_i(F_i)}{\mu(F^* \setminus F_i)} + (\lout - D_i) \cdot \frac{\mu(F_i \setminus F^*)}{\mu(F^* \setminus F_i)} \\[2pt]
{}&{}~~= 
d_i + D_i + \frac{c_i(F^*) - c_i(F_i)}{\mu(F^* \setminus F_i)} + f_{10}(c_i, d_i, D_i, F_i) \cdot \frac{\mu( F_i \setminus F^*)}{\mu(F^* \setminus F_i)} \\[2pt]
{}&{}~~= 
d_i + D_i + \frac{c_i(F^*) - c_i(F_i)}{\mu(F^* \setminus F_i)} + \frac{c_i(F_i) - c_i(F^*) + (\lin - d_i - D_i) \cdot \mu(F^* \setminus F_i)}{\mu(F_i \setminus F^*)} \cdot \frac{\mu(F_i \setminus F^*)}{\mu(F^* \setminus F_i)} \\[2pt]
{}&{}~~= 
d_i + D_i + (\lin - d_i - D_i)\\[2pt]
{}&{}~~= 
\lin .
\end{align*}
This concludes the proof of the lemma.
\end{proof}

\clearpage
\section{Toy examples} 
\label{sec:appendixb}

\begin{figure}[h!]
 \centering 
 
 \begin{subfigure}[b]{0.245\textwidth}
  \centering

  \captionsetup{width=0.9\linewidth}
  \caption{Case~\hyperlink{5.2.2.2}{5.2.2.2}, Subcase 2: $\delta_0\!=\!0$, $\Delta_0\!=\!0$, $\delta_1\!=\!1$, $\Delta_1\!=\!-1$ (first step is in Case~\hyperlink{2.2.2.1}{2.2.2.1}).}
 \end{subfigure}\hfill
 \caption{Toy examples illustrating the different cases occurring in Algorithm~\ref{algo:inv_min_cost_span}, where the values of the original cost function $c$ are presented in the top row. Note that $\delta_0=\Delta_0=0$ in all cases, hence $c_0\equiv c$.} 
 \label{fig:cases}
\end{figure}

\clearpage
\section{Summary of the cases occurring in Algorithm~\ref{algo:inv_min_cost_span}}
\label{sec:appendixc}

\begin{sideways}
{\small
\setlength{\tabcolsep}{3.4pt}
\renewcommand{\arraystretch}{1.05}
\begin{tabular}{!{\vrule width 1.5pt} c !{\vrule width 1.5pt} c|ccccccc !{\vrule width 1.5pt} c !{\vrule width 1.5pt} c !{\vrule width 1.5pt}} \specialrule{1.5pt}{0pt}{0pt}
 \textbf{Case}                       & \multicolumn{8}{c !{\vrule width 1.5pt}}{\textbf{Conditions}} & $\boldsymbol{\delta_{i+1}}$ & $\boldsymbol{\Delta_{i+1}}$ \\ \specialrule{1.5pt}{0pt}{0pt}
 \multirow{2}{*}{\hyperlink{1.1}{1.1}\hphantom{.0.0}} & \multirow{6}{*}{\rotatebox{90}{$\mu(F_i) = \mu(F^*)$}} & \multirow{2}{*}{\parbox{0.2\textwidth}{\centering $f_1(c_i, F_i)$ \\ $+ d_i + D_i \le \uin$}} &  &  &  &  &  &  & \multirow{2}{*}{$f_1(c_i, F_i)$} & \multirow{2}{*}{0} \\
                                     & & & & & & & & & & \\ \cline{1-1} \cline{3-11}
 \multirow{2}{*}{\hyperlink{1.2.1}{1.2.1}\hphantom{.0}} &  & \multirow{2}{*}{\parbox{0.2\textwidth}{\centering $f_1(c_i, F_i)$ \\ $+ d_i + D_i > \uin$}} &  & \multirow{2}{*}{\parbox{0.2\textwidth}{\centering $f_2(c_i, d_i, D_i, F_i)$ \\ $+ D_i \ge \lout$}} &  &  &  &  & \multirow{2}{*}{$f_1(c_i, F_i)$} & \multirow{2}{*}{$f_2(c_i, d_i, D_i, F_i)$} \\
                                     & & & & & & & & & & \\ \cline{1-1} \cline{3-11}
 \multirow{2}{*}{\hyperlink{1.2.2}{1.2.2}\hphantom{.0}} &  & \multirow{2}{*}{\parbox{0.2\textwidth}{\centering $f_1(c_i, F_i)$ \\ $+ d_i + D_i > \uin$}} &  & \multirow{2}{*}{\parbox{0.2\textwidth}{\centering $f_2(c_i, d_i, D_i, F_i)$ \\ $+ D_i < \lout$}} &  &  &  &  & \multicolumn{2}{c !{\vrule width 1.5pt}}{\multirow{2}{*}{\texttt{Infeasible}}} \\
                                     & & & & & & & & & \multicolumn{2}{c !{\vrule width 1.5pt}}{} \\ \hline
 \multirow{2}{*}{\hyperlink{2.1.1}{2.1.1}\hphantom{.0}} & \multirow{12}{*}{\rotatebox{90}{\parbox{120pt}{\centering $\mu(F_i) < \mu(F^*)$ \\ $Z_i = \ast$}}} & \multirow{2}{*}{\parbox{0.2\textwidth}{\centering $f_3(c_i, F_i)$ \\ $+ D_i \le \uout$}} &  & \multirow{2}{*}{\parbox{0.2\textwidth}{\centering $f_3(c_i, F_i)$ \\ $+ d_i + D_i \le \uin$}} &  &  &  &  & \multirow{2}{*}{0} & \multirow{2}{*}{$f_3(c_i, F_i)$} \\
                                     & & & & & & & & & & \\ \cline{1-1} \cline{3-11}
 \multirow{2}{*}{\hyperlink{2.1.2.1}{2.1.2.1}} &  & \multirow{2}{*}{\parbox{0.2\textwidth}{\centering $f_3(c_i, F_i)$ \\ $+ D_i \le \uout$}} &  & \multirow{2}{*}{\parbox{0.2\textwidth}{\centering $f_3(c_i, F_i)$ \\ $+ d_i + D_i > \uin$}} &  & \multirow{2}{*}{\parbox{0.13\textwidth}{\centering $F_i\setminus S_0$ \\ $\nsubseteq F^*\setminus S_0$}} &  & \multirow{2}{*}{\parbox{0.2\textwidth}{\centering $f_4(c_i, d_i, D_i, F_i)$ \\ $+ D_i \ge \lout$}} & \multirow{2}{*}{$f_5(c_i, d_i, D_i, F_i)$} & \multirow{2}{*}{$f_4(c_i, d_i, D_i, F_i)$} \\
                                     & & & & & & & & & & \\ \cline{1-1} \cline{3-11}
 \multirow{2}{*}{\hyperlink{2.1.2.2}{2.1.2.2}} &  & \multirow{2}{*}{\parbox{0.2\textwidth}{\centering $f_3(c_i, F_i)$ \\ $+ D_i \le \uout$}} &  & \multirow{2}{*}{\parbox{0.2\textwidth}{\centering $f_3(c_i, F_i)$ \\ $+ d_i + D_i > \uin$}} &  & \multicolumn{3}{c !{\vrule width 1.5pt}}{\multirow{2}{*}{\parbox{0.35\textwidth}{\centering either $F_i\setminus S_0 \subseteq F^*\setminus S_0$, or \\ $f_4(c_i, d_i, D_i, F_i) + D_i < \lout$}}} & \multicolumn{2}{c !{\vrule width 1.5pt}}{\multirow{2}{*}{\texttt{Infeasible}}} \\
                                     & & & & & & & & & \multicolumn{2}{c !{\vrule width 1.5pt}}{} \\ \cline{1-1} \cline{3-11}
 \multirow{2}{*}{\hyperlink{2.2.1}{2.2.1}\hphantom{.0}} &  & \multirow{2}{*}{\parbox{0.2\textwidth}{\centering $f_3(c_i, F_i)$ \\ $+ D_i > \uout$}} &  & \multirow{2}{*}{\parbox{0.2\textwidth}{\centering $f_6(c_i, d_i, F_i)$ \\ $+ d_i \le \uin - \uout$}} &  &  &  &  & \multirow{2}{*}{$f_6(c_i, D_i, F_i)$} & \multirow{2}{*}{$\uout - D_i$} \\
                                     & & & & & & & & & & \\ \cline{1-1} \cline{3-11}
 \multirow{2}{*}{\hyperlink{2.2.2.1}{2.2.2.1}} &  & \multirow{2}{*}{\parbox{0.2\textwidth}{\centering $f_3(c_i, F_i)$ \\ $+ D_i > \uout$}} &  & \multirow{2}{*}{\parbox{0.2\textwidth}{\centering $f_6(c_i, d_i, F_i)$ \\ $+ d_i > \uin - \uout$}} &  & \multirow{2}{*}{\parbox{0.13\textwidth}{\centering $F_i\setminus S_0$ \\ $\nsubseteq F^*\setminus S_0$}} &  & \multirow{2}{*}{\parbox{0.2\textwidth}{\centering $f_4(c_i, d_i, D_i, F_i)$ \\ $+ D_i \ge \lout$}} & \multirow{2}{*}{$f_5(c_i, d_i, D_i, F_i)$} & \multirow{2}{*}{$f_4(c_i, d_i, D_i, F_i)$} \\
                                     & & & & & & & & & & \\ \cline{1-1} \cline{3-11}
 \multirow{2}{*}{\hyperlink{2.2.2.2}{2.2.2.2}} &  & \multirow{2}{*}{\parbox{0.2\textwidth}{\centering $f_3(c_i, F_i)$ \\ $+ D_i > \uout$}} &  & \multirow{2}{*}{\parbox{0.2\textwidth}{\centering $f_6(c_i, d_i, F_i)$ \\ $+ d_i > \uin - \uout$}} &  & \multicolumn{3}{c !{\vrule width 1.5pt}}{\multirow{2}{*}{\parbox{0.35\textwidth}{\centering either $F_i\setminus S_0 \subseteq F^*\setminus S_0$, or \\ $f_4(c_i, d_i, D_i, F_i) + D_i < \lout$}}} & \multicolumn{2}{c !{\vrule width 1.5pt}}{\multirow{2}{*}{\texttt{Infeasible}}} \\
                                     & & & & & & & & & \multicolumn{2}{c !{\vrule width 1.5pt}}{} \\ \cline{1-1} \hline
 \multirow{3}{*}{\hyperlink{3.1.1}{3.1.1}\hphantom{.0}} & \multirow{15}{*}{\rotatebox{90}{\parbox{120pt}{\centering $\mu(F_i) < \mu(F^*)$ \\ $Z_i \ne \ast$}}} & \multirow{3}{*}{\parbox{0.2\textwidth}{\centering $f_8(c_i, F_i, Z_i)$ \\ $+ D_i \le \uout$}} &  & \multirow{3}{*}{\parbox{0.2\textwidth}{\centering $f_7(c_i, F_i, Z_i)$ \\ $+ f_8(c_i, F_i, Z_i)$ \\ $+ d_i + D_i \le \uin$}} &  &  &  &  & \multirow{3}{*}{$f_7(c_i, F_i, Z_i)$} & \multirow{3}{*}{$f_8(c_i, F_i, Z_i)$} \\
                                     & & & & & & & & & & \\
                                     & & & & & & & & & & \\ \cline{1-1} \cline{3-11}
 \multirow{3}{*}{\hyperlink{3.1.2.1}{3.1.2.1}} &  & \multirow{3}{*}{\parbox{0.2\textwidth}{\centering $f_8(c_i, F_i, Z_i)$ \\ $+ D_i \le \uout$}} &  & \multirow{3}{*}{\parbox{0.2\textwidth}{\centering $f_7(c_i, F_i, Z_i)$ \\ $+ f_8(c_i, F_i, Z_i)$ \\ $+ d_i + D_i > \uin$}} &  & \multirow{3}{*}{\parbox{0.13\textwidth}{\centering $F_i\setminus S_0$ \\ $\nsubseteq F^*\setminus S_0$}} &  & \multirow{3}{*}{\parbox{0.2\textwidth}{\centering $f_4(c_i, d_i, D_i, F_i)$ \\ $+ D_i \ge \lout$}} & \multirow{3}{*}{$f_5(c_i, d_i, D_i, F_i)$} & \multirow{3}{*}{$f_4(c_i, d_i, D_i, F_i)$} \\
                                     & & & & & & & & & & \\
                                     & & & & & & & & & & \\ \cline{1-1} \cline{3-11}
 \multirow{3}{*}{\hyperlink{3.1.2.2}{3.1.2.2}} &  & \multirow{3}{*}{\parbox{0.2\textwidth}{\centering $f_8(c_i, F_i, Z_i)$ \\ $+ D_i \le \uout$}} &  & \multirow{3}{*}{\parbox{0.2\textwidth}{\centering $f_7(c_i, F_i, Z_i)$ \\ $+ f_8(c_i, F_i, Z_i)$ \\ $+ d_i + D_i > \uin$}} &  & \multicolumn{3}{c !{\vrule width 1.5pt}}{\multirow{3}{*}{\parbox{0.35\textwidth}{\centering either $F_i\setminus S_0 \subseteq F^*\setminus S_0$, or \\ $f_4(c_i, d_i, D_i, F_i) + D_i < \lout$}}} & \multicolumn{2}{c !{\vrule width 1.5pt}}{\multirow{3}{*}{\texttt{Infeasible}}} \\
                                     & & & & & & & & & \multicolumn{2}{c !{\vrule width 1.5pt}}{} \\
                                     & & & & & & & & & \multicolumn{2}{c !{\vrule width 1.5pt}}{} \\ \cline{1-1} \cline{3-11}
 \multirow{2}{*}{\hyperlink{3.2.1}{3.2.1}\hphantom{.0}} &  & \multirow{2}{*}{\parbox{0.2\textwidth}{\centering $f_8(c_i, F_i, Z_i)$ \\ $+ D_i > \uout$}} &  & \multirow{2}{*}{\parbox{0.2\textwidth}{\centering $f_6(c_i, d_i, F_i)$ \\ $+ d_i \le \uin - \uout$}} &  &  &  &  & \multirow{2}{*}{$f_6(c_i, D_i, F_i)$} & \multirow{2}{*}{$\uout - D_i$} \\
                                     & & & & & & & & & & \\ \cline{1-1} \cline{3-11}
 \multirow{2}{*}{\hyperlink{3.2.2.1}{3.2.2.1}} &  & \multirow{2}{*}{\parbox{0.2\textwidth}{\centering $f_8(c_i, F_i, Z_i)$ \\ $+ D_i > \uout$}} &  & \multirow{2}{*}{\parbox{0.2\textwidth}{\centering $f_6(c_i, d_i, F_i)$ \\ $+ d_i > \uin - \uout$}} &  & \multirow{2}{*}{\parbox{0.13\textwidth}{\centering $F_i\setminus S_0$ \\ $\nsubseteq F^*\setminus S_0$}} &  & \multirow{2}{*}{\parbox{0.2\textwidth}{\centering $f_4(c_i, d_i, D_i, F_i)$ \\ $+ D_i \ge \lout$}} & \multirow{2}{*}{$f_5(c_i, d_i, D_i, F_i)$} & \multirow{2}{*}{$f_4(c_i, d_i, D_i, F_i)$} \\
                                     & & & & & & & & & & \\ \cline{1-1} \cline{3-11}
 \multirow{2}{*}{\hyperlink{3.2.2.2}{3.2.2.2}} &  & \multirow{2}{*}{\parbox{0.2\textwidth}{\centering $f_8(c_i, F_i, Z_i)$ \\ $+ D_i > \uout$}} &  & \multirow{2}{*}{\parbox{0.2\textwidth}{\centering $f_6(c_i, d_i, F_i)$ \\ $+ d_i > \uin - \uout$}} &  & \multicolumn{3}{c !{\vrule width 1.5pt}}{\multirow{2}{*}{\parbox{0.35\textwidth}{\centering either $F_i\setminus S_0 \subseteq F^*\setminus S_0$, or \\ $f_4(c_i, d_i, D_i, F_i) + D_i < \lout$}}} & \multicolumn{2}{c !{\vrule width 1.5pt}}{\multirow{2}{*}{\texttt{Infeasible}}} \\
                                     & & & & & & & & & \multicolumn{2}{c !{\vrule width 1.5pt}}{} \\
 \specialrule{1.5pt}{0pt}{0pt}
\end{tabular}
}
\end{sideways}

\begin{landscape}
{\small
\begin{center}
\setlength{\tabcolsep}{3.4pt}
\renewcommand{\arraystretch}{1.05}
\captionsetup{type=table}
\begin{tabular}{!{\vrule width 1.5pt} c !{\vrule width 1.5pt} c|ccccccc !{\vrule width 1.5pt} c !{\vrule width 1.5pt} c !{\vrule width 1.5pt}} \specialrule{1.5pt}{0pt}{0pt}
 \textbf{Case}                       & \multicolumn{8}{c !{\vrule width 1.5pt}}{\textbf{Conditions}} & $\boldsymbol{\delta_{i+1}}$ & $\boldsymbol{\Delta_{i+1}}$ \\ \specialrule{1.5pt}{0pt}{0pt}
 \multirow{2}{*}{\hyperlink{4.1.1}{4.1.1}\hphantom{.0}} & \multirow{12}{*}{\rotatebox{90}{\parbox{120pt}{\centering $\mu(F_i) > \mu(F^*)$ \\ $X_i = \ast$}}} & \multirow{2}{*}{\parbox{0.2\textwidth}{\centering $f_3(c_i, F_i)$ \\ $+ d_i + D_i \ge \lin$}} &  & \multirow{2}{*}{\parbox{0.2\textwidth}{\centering $f_3(c_i, F_i)$ \\ $+ D_i \ge \lout$}} &  &  &  &  & \multirow{2}{*}{0} & \multirow{2}{*}{$f_3(c_i, F_i)$} \\
                                     & & & & & & & & & & \\ \cline{1-1} \cline{3-11}
 \multirow{2}{*}{\hyperlink{4.1.2.1}{4.1.2.1}} &  & \multirow{2}{*}{\parbox{0.2\textwidth}{\centering $f_3(c_i, F_i)$ \\ $+ d_i + D_i \ge \lin$}} &  & \multirow{2}{*}{\parbox{0.2\textwidth}{\centering $f_3(c_i, F_i)$ \\ $+ D_i < \lout$}} &  & \multirow{2}{*}{\parbox{0.13\textwidth}{\centering $F^*\setminus S_0$ \\ $\nsubseteq F_i\setminus S_0$}} &  & \multirow{2}{*}{\parbox{0.2\textwidth}{\centering $f_9(c_i, D_i, F_i)$ \\ $+ d_i \le \uin - \lout$}} & \multirow{2}{*}{$f_9(c_i, D_i, F_i)$} & \multirow{2}{*}{$\lout - D_i$} \\
                                     & & & & & & & & & & \\ \cline{1-1} \cline{3-11}
 \multirow{2}{*}{\hyperlink{4.1.2.2}{4.1.2.2}} &  & \multirow{2}{*}{\parbox{0.2\textwidth}{\centering $f_3(c_i, F_i)$ \\ $+ d_i + D_i \ge \lin$}} &  & \multirow{2}{*}{\parbox{0.2\textwidth}{\centering $f_3(c_i, F_i)$ \\ $+ D_i < \lout$}} &  & \multicolumn{3}{c !{\vrule width 1.5pt}}{\multirow{2}{*}{\parbox{0.35\textwidth}{\centering either $F^*\setminus S_0 \subseteq F_i\setminus S_0$, or \\ $f_9(c_i, D_i, F_i) + d_i > \uin - \lout$}}} & \multicolumn{2}{c !{\vrule width 1.5pt}}{\multirow{2}{*}{\texttt{Infeasible}}} \\
                                     & & & & & & & & & \multicolumn{2}{c !{\vrule width 1.5pt}}{} \\ \cline{1-1} \cline{3-11}
 \multirow{2}{*}{\hyperlink{4.2.1}{4.2.1}\hphantom{.0}} &  & \multirow{2}{*}{\parbox{0.2\textwidth}{\centering $f_3(c_i, F_i)$ \\ $+ d_i + D_i < \lin$}} &  & \multirow{2}{*}{\parbox{0.2\textwidth}{\centering $f_{10}(c_i, d_i, D_i, F_i)$ \\ $+ D_i \ge \lout$}} &  &  &  &  & \multirow{2}{*}{$f_{11}(c_i, d_i, D_i, F_i)$} & \multirow{2}{*}{$f_{10}(c_i, d_i, D_i, F_i)$} \\
                                     & & & & & & & & & & \\ \cline{1-1} \cline{3-11}
 \multirow{2}{*}{\hyperlink{4.2.2.1}{4.2.2.1}} &  & \multirow{2}{*}{\parbox{0.2\textwidth}{\centering $f_3(c_i, F_i)$ \\ $+ d_i + D_i < \lin$}} &  & \multirow{2}{*}{\parbox{0.2\textwidth}{\centering $f_{10}(c_i, d_i, D_i, F_i)$ \\ $+ D_i < \lout$}} &  & \multirow{2}{*}{\parbox{0.13\textwidth}{\centering $F^*\setminus S_0$ \\ $\nsubseteq F_i\setminus S_0$}} &  & \multirow{2}{*}{\parbox{0.2\textwidth}{\centering $f_9(c_i, D_i, F_i)$ \\ $+ d_i \le \uin - \lout$}} & \multirow{2}{*}{$f_9(c_i, D_i, F_i)$} & \multirow{2}{*}{$\lout - D_i$} \\
                                     & & & & & & & & & & \\ \cline{1-1} \cline{3-11}
 \multirow{2}{*}{\hyperlink{4.2.2.2}{4.2.2.2}} &  & \multirow{2}{*}{\parbox{0.2\textwidth}{\centering $f_3(c_i, F_i)$ \\ $+ d_i + D_i < \lin$}} &  & \multirow{2}{*}{\parbox{0.2\textwidth}{\centering $f_{10}(c_i, d_i, D_i, F_i)$ \\ $+ D_i < \lout$}} &  & \multicolumn{3}{c !{\vrule width 1.5pt}}{\multirow{2}{*}{\parbox{0.35\textwidth}{\centering either $F^*\setminus S_0 \subseteq F_i\setminus S_0$, or \\ $f_9(c_i, D_i, F_i) + d_i > \uin - \lout$}}} & \multicolumn{2}{c !{\vrule width 1.5pt}}{\multirow{2}{*}{\texttt{Infeasible}}} \\
                                     & & & & & & & & & \multicolumn{2}{c !{\vrule width 1.5pt}}{} \\ \hline
 \multirow{3}{*}{\hyperlink{5.1.1}{5.1.1}\hphantom{.0}} & \multirow{18}{*}{\rotatebox{90}{\parbox{120pt}{\centering $\mu(F_i) > \mu(F^*)$ \\ $X_i \ne \ast$}}} & \multirow{3}{*}{\parbox{0.2\textwidth}{\centering $f_7(c_i, X_i, F_i)$ \\ $+ f_{12}(c_i, X_i, F_i)$ \\ $+ d_i + D_i \ge \lin$}} &  & \multirow{3}{*}{\parbox{0.2\textwidth}{\centering $f_{12}(c_i, X_i, F_i)$ \\ $+ D_i \ge \lout$}} &  &  &  &  & \multirow{3}{*}{$f_7(c_i, X_i, F_i)$} & \multirow{3}{*}{$f_{12}(c_i, X_i, F_i)$} \\
                                     & & & & & & & & & & \\
                                     & & & & & & & & & & \\ \cline{1-1} \cline{3-11}
 \multirow{3}{*}{\hyperlink{5.1.2.1}{5.1.2.1}} &  & \multirow{3}{*}{\parbox{0.2\textwidth}{\centering $f_7(c_i, X_i, F_i)$ \\ $+ f_{12}(c_i, X_i, F_i)$ \\ $+ d_i + D_i \ge \lin$}} &  & \multirow{3}{*}{\parbox{0.2\textwidth}{\centering $f_{12}(c_i, X_i, F_i)$ \\ $+ D_i < \lout$}} &  & \multirow{3}{*}{\parbox{0.13\textwidth}{\centering $F^*\setminus S_0$ \\ $\nsubseteq F_i\setminus S_0$}} &  & \multirow{3}{*}{\parbox{0.2\textwidth}{\centering $f_9(c_i, D_i, F_i)$ \\ $+ d_i \le \uin - \lout$}} & \multirow{3}{*}{$f_9(c_i, D_i, F_i)$} & \multirow{3}{*}{$\lout - D_i$} \\
                                     & & & & & & & & & & \\
                                     & & & & & & & & & & \\ \cline{1-1} \cline{3-11}
 \multirow{3}{*}{\hyperlink{5.1.2.2}{5.1.2.2}} &  & \multirow{3}{*}{\parbox{0.2\textwidth}{\centering $f_7(c_i, X_i, F_i)$ \\ $+ f_{12}(c_i, X_i, F_i)$ \\ $+ d_i + D_i \ge \lin$}} &  & \multirow{3}{*}{\parbox{0.2\textwidth}{\centering $f_{12}(c_i, X_i, F_i)$ \\ $+ D_i < \lout$}} &  & \multicolumn{3}{c !{\vrule width 1.5pt}}{\multirow{3}{*}{\parbox{0.35\textwidth}{\centering either $F^*\setminus S_0 \subseteq F_i\setminus S_0$, or \\ $f_9(c_i, D_i, F_i) + d_i > \uin - \lout$}}} & \multicolumn{2}{c !{\vrule width 1.5pt}}{\multirow{3}{*}{\texttt{Infeasible}}} \\
                                     & & & & & & & & & \multicolumn{2}{c !{\vrule width 1.5pt}}{} \\
                                     & & & & & & & & & \multicolumn{2}{c !{\vrule width 1.5pt}}{} \\ \cline{1-1} \cline{3-11}
\multirow{3}{*}{\hyperlink{5.2.1}{5.2.1}\hphantom{.0}} &  & \multirow{3}{*}{\parbox{0.2\textwidth}{\centering $f_7(c_i, X_i, F_i)$ \\ $+ f_{12}(c_i, X_i, F_i)$ \\ $+ d_i + D_i < \lin$}} &  & \multirow{3}{*}{\parbox{0.2\textwidth}{\centering $f_{10}(c_i, d_i, D_i, F_i)$ \\ $+ D_i \ge \lout$}} &  &  &  &  & \multirow{3}{*}{$f_{11}(c_i, d_i, D_i, F_i)$} & \multirow{3}{*}{$f_{10}(c_i, d_i, D_i, F_i)$} \\
                                     & & & & & & & & & & \\
                                     & & & & & & & & & & \\ \cline{1-1} \cline{3-11}
 \multirow{3}{*}{\hyperlink{5.2.2.1}{5.2.2.1}} &  & \multirow{3}{*}{\parbox{0.2\textwidth}{\centering $f_7(c_i, X_i, F_i)$ \\ $+ f_{12}(c_i, X_i, F_i)$ \\ $+ d_i + D_i < \lin$}} &  & \multirow{3}{*}{\parbox{0.2\textwidth}{\centering $f_{10}(c_i, d_i, D_i, F_i)$ \\ $+ D_i < \lout$}} &  & \multirow{3}{*}{\parbox{0.13\textwidth}{\centering $F^*\setminus S_0$ \\ $\nsubseteq F_i\setminus S_0$}} &  & \multirow{3}{*}{\parbox{0.2\textwidth}{\centering $f_9(c_i, D_i, F_i)$ \\ $+ d_i \le \uin - \lout$}} & \multirow{3}{*}{$f_9(c_i, D_i, F_i)$} & \multirow{3}{*}{$\lout - D_i$} \\
                                     & & & & & & & & & & \\
                                     & & & & & & & & & & \\ \cline{1-1} \cline{3-11}
 \multirow{3}{*}{\hyperlink{5.2.2.2}{5.2.2.2}} &  & \multirow{3}{*}{\parbox{0.2\textwidth}{\centering $f_7(c_i, X_i, F_i)$ \\ $+ f_{12}(c_i, X_i, F_i)$ \\ $+ d_i + D_i < \lin$}} &  & \multirow{3}{*}{\parbox{0.2\textwidth}{\centering $f_{10}(c_i, d_i, D_i, F_i)$ \\ $+ D_i < \lout$}} &  & \multicolumn{3}{c !{\vrule width 1.5pt}}{\multirow{3}{*}{\parbox{0.35\textwidth}{\centering either $F^*\setminus S_0 \subseteq F_i\setminus S_0$, or \\ $f_9(c_i, D_i, F_i) + d_i > \uin - \lout$}}} & \multicolumn{2}{c !{\vrule width 1.5pt}}{\multirow{3}{*}{\texttt{Infeasible}}} \\
                                     & & & & & & & & & \multicolumn{2}{c !{\vrule width 1.5pt}}{} \\
                                     & & & & & & & & & \multicolumn{2}{c !{\vrule width 1.5pt}}{} \\
 \specialrule{1.5pt}{0pt}{0pt}
\end{tabular}
\captionof{table}{Summary of the cases occurring in Algorithm~\ref{algo:inv_min_cost_span}.}
\label{table:summary}
\end{center}
}
\end{landscape}

\end{document}